\documentclass[10pt]{amsart}
\usepackage{amsrefs}
\usepackage{amssymb}
\usepackage[all]{xy}
\usepackage{multicol}
\usepackage{hyperref}

\usepackage[draft]{changes} 

\usepackage{tikz}
\usetikzlibrary{matrix,arrows}

\DeclareMathOperator{\cd}{cd}

\DeclareMathOperator{\gr}{\mathrm{gr}}

\DeclareMathOperator{\Img}{Im}

\DeclareMathOperator{\invlim}{\varprojlim}
\DeclareMathOperator{\Ker}{Ker}

\DeclareMathOperator{\ab}{ab}
\DeclareMathOperator{\Zen}{Z}
\DeclareMathOperator{\ext}{Ext}

\DeclareFontFamily{U}{wncy}{}
\DeclareFontShape{U}{wncy}{m}{n}{<->wncyr10}{}
\DeclareSymbolFont{mcy}{U}{wncy}{m}{n}
\DeclareMathSymbol{\Sha}{\mathord}{mcy}{"58}
\DeclareMathSymbol{\sha}{\mathord}{mcy}{"78}

\begin{document}

	\newtheorem{thm}{Theorem}[section]
	\newtheorem{cor}[thm]{Corollary}
	\newtheorem{lem}[thm]{Lemma}
	\newtheorem{fact}[thm]{Fact}
	\newtheorem{prop}[thm]{Proposition}
	\newtheorem{defin}[thm]{Definition}
	\newtheorem{exam}[thm]{Example}
	\newtheorem{examples}[thm]{Examples}
	\newtheorem{rem}[thm]{Remark}
	\newtheorem{case}{\sl Case}
	\newtheorem{claim}{Claim}
	\newtheorem{question}[thm]{Question}
	\newtheorem{conj}[thm]{Conjecture}
	\newtheorem*{notation}{Notation}
	\swapnumbers
	\newtheorem{rems}[thm]{Remarks}
	\newtheorem*{acknowledgment}{Acknowledgments}
	\newtheorem*{thmno}{Theorem}
	
	\newtheorem{questions}[thm]{Questions}
	\numberwithin{equation}{section}

	\newcommand{\inv}{^{-1}}
	\newcommand{\isom}{\cong}
	\newcommand{\dbC}{\mathbb{C}}
	\newcommand{\F}{\mathbb{F}}
	\newcommand{\dbN}{\mathbb{N}}
	\newcommand{\Q}{\mathbb{Q}}
	\newcommand{\dbR}{\mathbb{R}}
	\newcommand{\dbU}{\mathbb{U}}
	\newcommand{\Z}{\mathbb{Z}}
	\newcommand{\calG}{\mathcal{G}}
	\newcommand{\calX}{\mathcal{X}}
	\newcommand{\calY}{\mathcal{Y}}
	\newcommand{\K}{\mathbb{K}}
	\newcommand{\rmH}{\mathrm{H}}
	\newcommand{\bfH}{\mathbf{H}}
	\newcommand{\rmr}{\mathrm{r}}
	\newcommand{\Span}{\mathrm{Span}}
	\newcommand{\eue}{\mathbf{e}}
	\newcommand{\bfLam}{\mathbf{\Lambda}}
	\newcommand{\dbl}{[\![}
	\newcommand{\bdr}{]\!]}
	\newcommand{\FppG}{\F_p[\![G]\!]}
	\newcommand{\grFp}[1]{\gr\F_p[\![{#1}]\!]}
	\newcommand{\set}[2]{\left\lbrace{#1}\ \big\vert\ {#2}\right\rbrace}

    \newcommand{\gen}[1]{\left\langle{#1}\right\rangle}
	
	\newcommand{\pres}[2]{\left\langle\:{#1}\:\mid\: {#2}\:\right\rangle}
	
	
	\newcommand{\hac}{\hat c}
	\newcommand{\hatheta}{\hat\theta}
	
	
	\title[Demu\v skin variations and absolute Galois Groups]
	{ Variations of Demu\v skin groups\\ that are not absolute Galois groups 
}

\author{Simone Blumer}

\email{simoneblumer96@gmail.com}
\author{Claudio Quadrelli}
\address{Department of Science \& High-Tech, University of Insubria, Como, Italy EU}
\email{claudio.quadrelli@uninsubria.it}
\date{\today}
\dedicatory{Dedicated to John P. Labute, \\ 61 years after his PhD and 20 years after his paper on mild pro-$p$ groups}

\begin{abstract}
	We construct two families of examples of pro-$p$ groups, with rather elementary presentations, that do not complete into 1-cyclotomic oriented pro-$p$ groups.
	These provide brand new examples of pro-$p$ groups that do not occur as maximal pro-$p$ Galois groups of fields containing a root of unity of order $p$ --- and thus, as absolute Galois groups.
	Moreover, we show that these pro-$p$ groups may not be ruled out as maximal pro-$p$ Galois groups employing other cohomological properties that are known to hold for all maximal pro-$p$ Galois groups, such as the triple Massey vanishing property, or the quadraticity of $\F_p$-cohomology.
\end{abstract}

\subjclass[2010]{Primary 12G05; Secondary 20E18, 20J06, 12F10}

\keywords{Demu\v skin groups, absolute Galois groups, Galois cohomology, restricted Lie algebras, Massey products, 1-cyclotomic oriented pro-$p$ groups}

\maketitle

\section{Introduction}
\label{sec:intro}

\subsection{Framework}
Let $p$ be a prime number, and, given a field $\K$, let $\K(p)$ denote its {\sl $p$-closure}, i.e., $\K(p)$ is the compositum of all finite Galois $p$-extensions of $\K$.
The {\sl maximal pro-$p$ Galois group} of the field $\K$, denoted by $G_{\K}(p)$, is the Galois group of the Galois extension $\K(p)/\K$: it is a pro-$p$ group and it is the maximal pro-$p$ quotient of the {\sl absolute Galois group} of $\K$.

Detecting which pro-$p$ groups occur as maximal pro-$p$ Galois groups is one of the major pursuits in current research in Galois theory (see, e.g., \cite[\S~25.16]{friedjarden} and \cite[\S~2.2]{birs}).
Already the production of new concrete examples of pro-$p$ groups which do not occur as maximal pro-$p$ Galois groups is considered a remarkable achievement (see, e.g., \cites{BLMS, rogel, cq:chase, BQW}).
In recent years --- especially after the proof of the Norm Residue Theorem --- the research on maximal pro-$p$ Galois groups focused on the investigation of their cohomology, and of associated graded algebras, and this led to the discovery of new obstructions for the realization of pro-$p$ groups as maximal pro-$p$ Galois groups, and to the formulation of new conjectures --- see, e.g., \cites{posi,cq:bk,EM:Massey, MPQT,mt:Massey,HW} and references therein.

One of the cohomological properties that provided new obstructions for the realization of pro-$p$ groups and maximal pro-$p$ Galois groups is {\sl 1-cyclotomicity}.
A pro-$p$ group is said to be 1-cyclotomic if there exists a continuous $G$-module $M$, $M\simeq \Z_p$ as an abelian pro-$p$ group, such that for every closed subgroup $H$ of $G$, and for every $n\geq1$, the natural map
\begin{equation}\label{eq:1cyc intro}
	\rmH^1(H,M_H/p^nM_H)\longrightarrow\rmH^1(H,M_H/pM_H),
\end{equation}
--- here $M_H$ denotes the restriction of $M$ as an $H$-module ---,
induced by the epimorphism of $H$-modules
$M_H/p^nM_H\to M_H/pM_H$, is surjective.
By Hilbert~90, maximal pro-$p$ Galois groups of fields containing a root of unity of order $p$ are 1-cyclotomic, as observed first by J.~Labute (see Section~\ref{ssec:GK 1cyc} below).
In the last years this property has been put under the spotlight by several authors, see, e.g., \cite{eq:kummer,qw:cyc,IlirSlobo,cq:chase,DCF,MerScaH90}.

\subsection{Main results: new examples}
The goal of this work is to produce two families of pro-$p$ groups, $\mathcal{F}_1$ and $\mathcal{F}_2$, that are ``variations'' of {\sl Demu\v skin groups} --- namely, they have very similar presentations ---, which are not 1-cyclotomic.
Demu\v skin groups are Poincar\'e duality pro-$p$ groups of dimension $2$ --- hence, they have a single defining relation: for example, if $p\neq2$ a pro-$p$ group $G$ is a Demu\v skin group if and only if it has a minimal pro-$p$ presentation 
\begin{equation}\label{eq:pres demushkin}
	G=\left\langle\:x_1,y_1,\ldots,x_d,y_d\:\mid\: x_1^q[x_1,y_1]\cdots[x_{d},y_d]=1 \:\right\rangle_{\hat p}
\end{equation}
for some $p$-power $q=p^k$ with $k\in\{1,2,\ldots,\infty\}$ (here we adopt the convention $p^\infty=0$), and $d\geq1$, see, e.g., \cite[Ch.~I, \S~4.5]{serre:galc}.
Demu\v skin groups were introduced by S.P.~Demu\v skin (see \cite{demushkin0,demushkin1,demushkin2}): these pro-$p$ groups show up in Galois theory as the maximal pro-$p$ Galois group of a $p$-adic local field containing a root of unity of order $p$ is a Demu\v skin group (see, e.g., \cite[Ch.~II, \S~5.6]{serre:galc}).
Moreover, {\sl all} Demu\v skin groups --- and their cohomology, and their associated graded algebras --- satisfy all properties that are known, or conjectured, to hold for maximal pro-$p$ Galois groups (which we will recall in \S~\ref{ssec:galois-like} below). In particular, all Demu\v skin groups are 1-cyclotomic.
Nevertheless, it is still not known whether {\sl any} Demu\v skin group occurs as the maximal pro-$p$ group of a field (see, e.g., \cite[\S~11.2]{etc_new}).

The family $\mathcal{F}_1$ consists of pro-$p$ groups with minimal pro-$p$ presentation
\begin{equation}\label{eq:presG2}
	G=\left\langle\:x_1,y_1,\ldots,x_d,y_d\:\mid\: [x_1^q,y_1]\cdots[x_{d},y_d]=1 \:\right\rangle_{\hat p}
\end{equation}
for some $d\geq2$ and some $p$-power $q=p^k$ with $k\in\{1,2,\ldots\}$, and $k\geq2$ if $p=2$ --- hence $q\neq0$. We prove the following.

\begin{thm}\label{thmG2}
	Every pro-$p$ group $G$ in the family $\mathcal{F}_1$ is not 1-cyclotomic.
	In particular, $G$ does not occur as the maximal pro-$p$ Galois group of a field containing a root of unity of order $p$ {\rm(}and also $\sqrt{-1}$ if $p=2${\rm)}.
\end{thm}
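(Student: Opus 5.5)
The plan is to argue by contradiction. I would suppose that $G\in\mathcal{F}_1$ is 1-cyclotomic with respect to some orientation $\theta\colon G\to\Z_p^\times$, where $\theta$ is the action on the module $M\simeq\Z_p$. Step one pins $\theta$ down; step two finds an open subgroup violating the Kummerian property.

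I would use the standard translation of 1-cyclotomicity through Hilbert~90-type exact sequences, which I take from the literature on Kummerian oriented pro-$p$ groups (it is not proved earlier in this paper). Surjectivity of \eqref{eq:1cyc intro} for all $n$ and for a given closed subgroup $H$ is equivalent to $H/K_\theta(H)$ being a torsion-free abelian pro-$p$ group. Here $K_\theta(H)$ is the closed normal subgroup generated by the elements $h^{-\theta(h)}ghg^{-1}$ with $g,h\in H$; equivalently $H/K_\theta(H)$ is the largest quotient of $H$ on which $H$ acts on itself through $\theta$.

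\textbf{Step 1: $\theta$ is trivial.} I would take $H=G$. Write $\bar x_i,\bar y_i$ for the images of the generators in $\Z_p^{2d}$, written additively. In the twisted abelianization one has $g h g^{-1}=h^{\theta(g)}$, so every commutator $[a,b]$ becomes a $\Z_p$-linear expression in $\bar a,\bar b$ with coefficients $\pm(\theta(\cdot)-1)$. The defining relation then becomes a single linear relation $v=0$, where
\[ v=(1-\theta(y_1))\,q\,\bar x_1+(\theta(x_1)^q-1)\,\bar y_1+\sum_{i\ge2}\bigl((1-\theta(y_i))\bar x_i+(\theta(x_i)-1)\bar y_i\bigr), \]
up to the exact sign and normalisation conventions. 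Hence $G/K_\theta(G)\simeq\Z_p^{2d}/\Z_p v$. This quotient is torsion-free only if $v=0$ or $v$ is primitive. Every coefficient of $v$ lies in $p\Z_p$, because $\mathrm{Im}\,\theta\subseteq 1+p\Z_p$ (respectively $1+4\Z_2$). So $v=0$, which forces $\theta(y_1)=\theta(x_i)=\theta(y_i)=1$ for $i\ge2$ and $\theta(x_1)^q=1$. Since $1+p\Z_p$ (respectively $1+4\Z_2$) is torsion-free, also $\theta(x_1)=1$, and $\theta$ is trivial.

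\textbf{Step 2: a subgroup with torsion in its abelianization.} With $\theta$ trivial, Kummerianity of a closed subgroup $U$ just says that $U^{\ab}$ is torsion-free. I would take $U$ to be the kernel of the epimorphism $G\to\Z/p$ sending $y_1\mapsto1$ and all other generators to $0$. By Reidemeister--Schreier with transversal $\{y_1^j\}_{0\le j<p}$, the group $U$ is generated by the elements $a_j=y_1^jx_1y_1^{-j}$ together with other Schreier generators. Its relations are the conjugates $r_j=y_1^jry_1^{-j}$ of the defining relation $r$.

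The key computation is the image of $r_j$ in the free abelianization of the free pro-$p$ group on the Schreier generators. For $i\ge2$, both $x_i$ and $y_i$ (and their conjugates) lie in $U$, so $[x_i,y_i]$ contributes nothing. The factor $[x_1^q,y_1]$ contributes $q(a_j-a_{j+1})$, with indices taken mod $p$. Hence $U^{\ab}$ is a free $\Z_p$-module modulo the span of the vectors $q(a_j-a_{j+1})$. Since $q\neq1$ and the vectors $a_j-a_{j+1}$ span a direct summand, $U^{\ab}$ contains $(\Z/q)^{p-1}\neq0$. This contradicts Step 1, and the second sentence of the theorem follows from Labute's observation that such maximal pro-$p$ Galois groups are 1-cyclotomic.

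\textbf{Main obstacle.} The delicate points are the exact bookkeeping of the Reidemeister--Schreier images, in particular checking that the $a_j$ really are part of a basis, and the correct sign conventions in Step 1. For $p=2$ the hypothesis $\sqrt{-1}\in\K$ is what gives $\mathrm{Im}\,\theta\subseteq1+4\Z_2$. For the bare group-theoretic claim with $p=2$, one must also rule out $\theta(x_1)=-1$. For that case I would first try to redo Step 2 with the twisted abelianization of $U$, exploiting $k\ge2$, instead of the plain abelianization.
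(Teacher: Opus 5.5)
Your Step~2 is the paper's argument: the same index-$p$ subgroup $U=\Ker(y_1\mapsto 1)$, the same Reidemeister--Schreier relations, and torsion coming from the conjugates of $[x_1^q,y_1]$. Your count $(\Z/q)^{p-1}$ is in fact the right one, since the $p$ relations sum to zero; the paper's $(\Z/q)^p$ is a harmless slip.

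Step~1 takes a genuinely different route. The paper restricts $\theta$ to $H=\langle x_1^q,y_1,x_2,\dots,y_d\rangle$ and treats $H$ as the Demu\v skin group with relation $[z,y_1][x_2,y_2]\cdots[x_d,y_d]$. It then invokes Labute's theorem that the canonical orientation, here trivial, is the only Kummerian one. Finally $\theta(x_1)^q=1$ gives $\theta=\mathbf{1}$.

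You instead compute directly with the relator of $G$. This gives the stronger fact that $(G,\theta)$ is not even Kummerian for $\theta\neq\mathbf{1}$. It also needs neither Labute's uniqueness theorem nor the identification of that subgroup with an abstract Demu\v skin group. The paper's route is shorter once those facts are granted. Your caveat about $\theta(x_1)=-1$ for $p=2$ concerns a case the paper also leaves aside (it uses that $1+4\Z_2$ is torsion-free), so it is not a shortfall relative to the paper.

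The one thing to fix is the criterion you quote, on which Step~1 rests as written. In $K_\theta(H)$ the conjugated element must range over $\Ker\theta$ only. The criterion (Proposition~\ref{prop:K}) is that $H/K_\theta(H)$ is torsion-free, equivalently that $\Ker\theta/K_\theta(H)$ is free abelian; $H/K_\theta(H)$ is not abelian when $\theta\neq\mathbf{1}$.

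With $h$ ranging over all of $H$, taking $g=h$ puts $h^{1-\theta(h)}$ into your subgroup. So every $h$ with $\theta(h)\neq 1$ becomes torsion. Your criterion would then wrongly declare $(\Z_p,\theta)$ with $\theta\neq\mathbf{1}$, or a Demu\v skin group with a nontrivial canonical orientation, non-Kummerian. Likewise, for $\theta\neq\mathbf{1}$, $G/K_\theta(G)$ is not $\Z_p^{2d}/\Z_p v$.

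The repair keeps your idea. Let $\theta\neq\mathbf{1}$ and let $\pi\colon F\to G$ be the given presentation.
\begin{itemize}
\item[(1)] One has $F/K_{\theta\circ\pi}(F)\cong N\rtimes\Img\theta$, with $N=\Ker(\theta\circ\pi)/K_{\theta\circ\pi}(F)\cong\Z_p^{2d-1}$ and $\Img\theta$ acting on $N$ by scalars. Hence $G/K_\theta(G)\cong (N/\Z_p\bar r)\rtimes\Img\theta$.
\item[(2)] Since $r\in F'$ and the image of $F'$ in $N$ is $(\lambda-1)N\subseteq pN$, with $\lambda$ a generator of $\Img\theta$, torsion-freeness forces $\bar r=0$.
\item[(3)] To see that $\bar r\neq 0$, map $F\to\Z_p^{2d}\rtimes\Img\theta$ by $s\mapsto(e_s,\theta(s))$ on the generators $s\in\{x_1,\dots,y_d\}$, with multiplication $(a,\mu)(b,\nu)=(a+\mu b,\mu\nu)$. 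This map kills $K_{\theta\circ\pi}(F)$.
\item[(4)] Up to unit factors, the image of $r$ has the coordinates of your $v$. The only change is that the $e_{x_1}$-coordinate is $(1-\theta(y_1))(1+\theta(x_1)+\dots+\theta(x_1)^{q-1})$, and the second factor is nonzero.
\end{itemize}
Its vanishing then forces $\theta=\mathbf{1}$ exactly as you argue.
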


The family $\mathcal{F}_2$ consists of pro-$p$ groups with minimal pro-$p$ presentation
\begin{equation}\label{eq:presG1}
	G=\left\langle\:x_1,y_1,\ldots,x_d,y_d\:\mid\: x_1^q[x_1,y_1]\cdots[x_{d},y_d]=[z_1,z_2]=1 \:\right\rangle_{\hat p}
\end{equation}
for some $p$-power $q=p^k$ with $k\in\{1,2,\ldots,\infty\}$ (and $k\geq2$ if $p=2$), $d\geq2$, and $z_1,z_2\in\{x_1,\ldots,y_d\}$ such that $z_1\neq z_2$ and $\{z_1,z_2\}\neq\{x_i,y_i\}$ for any $1\leq i\leq d$.
In other words, a pro-$p$ group $G$ in $\mathcal{F}_2$ is the quotient of a Demu\v skin group with minimal presentation~\eqref{eq:pres demushkin} over the commutator of two distinct generators that are not paired in a commutator of the Demu\v skin defining relation.

\begin{thm}\label{thmA1}
	Every pro-$p$ group $G$ in the family $\mathcal{F}_2$ is not 1-cyclotomic.
	In particular, $G$ does not occur as the maximal pro-$p$ Galois group of a field containing a root of unity of order $p$ {\rm(}and also $\sqrt{-1}$ if $p=2${\rm)}.
\end{thm}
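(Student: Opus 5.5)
The plan is to argue by contradiction, through the Kummerian characterisation of 1-cyclotomicity. Suppose $G$ in $\mathcal{F}_2$ is 1-cyclotomic with respect to a $G$-module $M\simeq\Z_p$, and let $\theta\colon G\to1+p\Z_p$ be the associated orientation. Then every closed subgroup $H$, with $\theta\vert_H$, is Kummerian, since surjectivity of \eqref{eq:1cyc intro} for all $n$ is exactly liftability of $\F_p$-characters to $\Z_p(\theta)$-cocycles. Equivalently, the quotient $H/K_\theta(H)$ is a free abelian pro-$p$ group of rank $d(H)$. Here $K_\theta(H)$ is the closed subgroup generated by the elements $h^{-\theta(g)}ghg^{-1}$ with $g\in H$ and $h\in\Phi$-type commutator data. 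The contradiction will come in two steps: first pin down $\theta$ on $G$ itself, then exhibit a suitable subgroup $H$ whose quotient $H/K_\theta(H)$ has torsion.

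\emph{Step 1: determine $\theta$.} In $G/K_\theta(G)$ conjugation acts by $ghg^{-1}=h^{\theta(g)}$, so each commutator $[a,b]$ reduces to the linear expression $a^{\theta(b)^{-1}-1}\,b^{1-\theta(a)^{-1}}$, up to the obvious normalisation. Writing the two defining relations additively in $\Z_p^{2d}$ gives two linear forms. Since $G/K_\theta(G)$ must be torsion-free of rank $2d=d(G)$, both forms must vanish identically.
\begin{itemize}
\item The relation $[z_1,z_2]=1$ forces $\theta(z_1)=\theta(z_2)=1$.
\item The Demu\v skin relation forces the usual Demu\v skin orientation: $\theta(y_1)=(1-q)^{-1}$, and $\theta$ trivial on every other generator. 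This holds for $d\ge2$, where each generator $x_i,y_i$ with $i\ge2$ appears in exactly one commutator.
\end{itemize}
These two sets of conditions are compatible only if $y_1\notin\{z_1,z_2\}$, or if $q=0$. In the incompatible case, the case $p=2$ with $k\geq2$, together with the requirement $\theta\in 1+p\Z_p$, should already give the contradiction. Otherwise $\theta$ is now completely determined, and I continue to Step 2.

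\emph{Step 2: pass to a subgroup.} With $\theta$ fixed, I pick a generator $w\in\{x_1,\ldots,y_d\}$ such that $w$ is the partner in the Demu\v skin relation of $z_1$ or $z_2$. The hypothesis $\{z_1,z_2\}\neq\{x_i,y_i\}$ guarantees that such a $w$ exists and differs from both $z_j$. Let $U$ be the kernel of the $\F_p$-character dual to $w$, an open subgroup of index $p$. By Reidemeister--Schreier, $U$ is generated by the conjugates $w^jvw^{-j}$ for $v\neq w$, $0\le j<p$, together with $w^p$. Its relations are the $p$ conjugates of the Demu\v skin relation and the $p$ conjugates of $[z_1,z_2]$. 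I then compute $U/K_\theta(U)$, using $\theta\vert_U$, as the cokernel of the resulting $2p\times d(U)$ linear map over $\Z_p$.

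The aim is to show that this cokernel has nontrivial torsion, or else rank smaller than $d(U)$. The mechanism should be the following. The conjugates of $[z_1,z_2]$ become trivial linear forms, because $\theta(z_j)=1$, so they do not by themselves obstruct. However, in the index-$p$ subgroup the commutator $[z_j,w]$ coming from the Demu\v skin relation splits into terms involving $w^p$ and the differences between the conjugates of $z_j$. The extra relations $[w^jz_1w^{-j},w^jz_2w^{-j}]$, after rewriting in the new generators, impose that some of these differences are equal up to a factor $p$, and this produces $p$-torsion. If index $p$ turns out not to be enough, I would replace $U$ by the closed subgroup generated by $z_1$, $z_2$ and $w^p$ (or by $w$), and compute its Kummerian quotient directly.

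\emph{Main obstacle.} The hard part is Step 2: the bookkeeping of the Reidemeister--Schreier rewriting of both relations modulo $K_\theta(U)$, and the verification that torsion genuinely appears rather than being cancelled. I would first carry out this computation for $p$ odd, $d=2$, $z_1=x_1$, $z_2=x_2$, and then argue that the general case reduces to this one by the symmetry of the presentation.
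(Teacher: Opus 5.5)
Your Step 1 is fine. Testing each relation in $F/K_{\theta}(F)$ does force $\theta(z_1)=\theta(z_2)=1$ and the Demu\v skin orientation; the paper gets the same conclusion differently, from $\langle z_1,z_2\rangle\simeq\Z_p^2$ and by passing Kummerianity to the Demu\v skin cover $\tilde G$. In your ``incompatible'' case the contradiction is immediate, since $\theta(y_1)$ would be both $1$ and $(1-q)^{-1}\neq1$; the remark about $p=2$ plays no role there.

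\textbf{The gap is in Step 2: the index-$p$ subgroups you propose are Kummerian, so they give no contradiction.} Write $t_i$ for the Demu\v skin partner of $z_i$, and take your own test case $z_1=x_1$, $z_2=x_2$, $d=2$.
\begin{itemize}
\item For $w=t_2=y_2$, put $s(h)=s^{y_2^h}$. The conjugated Demu\v skin relations read $x_1(h)^q[x_1(h),y_1(h)]\,x_2(h)^{-1}x_2(h+1)=1$. The first factor equals $x_1(h)^{q-1}x_1(h)^{y_1(h)}$, which lies in $K_\theta$ because $\theta(y_1)=(1-q)^{-1}$. So these relations only say $\bar x_2(h+1)=\bar x_2(h)$, and the closing one is automatic because $\theta(y_2^p)=1$.
\item For $w=t_1=y_1$ one gets $\bar x_1(h+1)=\bar x_1(h)^{1-q}$. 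The closing relation is again automatic, because $\theta(y_1^p)^{-1}=(1-q)^p$.
\end{itemize}
In both cases the conjugates of $[z_1,z_2]$, even after substituting the eliminated generators, are commutators of elements of $\Ker(\theta)$. They therefore lie in $\Ker(\theta)'\subseteq K_\theta$ and impose nothing; this is exactly where your predicted mechanism fails. Hence all relations of $U$ lie in $K_\theta$ of the free group, and $U/K_\theta(U)$ is torsion-free. The same computation works in every admissible configuration, and Remark~\ref{rem:G2 U kummer} shows that the kernel of the character dual to $z_2$ is Kummerian too. Your fallback, a non-open subgroup such as $\langle z_1,z_2,w^p\rangle$, comes with no way of obtaining a presentation, so it is not a workable plan.

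\textbf{The missing idea is to kill two characters at once.} The paper works in $V=\Ker(z_2^\ast)\cap\Ker(t_1^\ast)$, of index $p^2$, where $t_1$ is the partner of $z_1$, not of $z_2$.
\begin{itemize}
\item In $U=\Ker(z_2^\ast)$, the relation $[z_1,z_2]=1$ is used not modulo $K_\theta$ but to collapse generators: $z_1^{z_2^h}=z_1$. Closing up the $p$ conjugates of the Demu\v skin relation then gives $[t_2,z_2^p]=\prod_h[t_2,z_2]^{z_2^h}$. This relation contains $p$ factors $[z_1,t_1^{z_2^h}]$, all sharing the same $z_1$.
\item In $V$, write $t_1^{z_2^h}=w_ht_1$ with $w_h\in V$, so that $[z_1,w_ht_1]=[z_1,t_1][z_1,w_h]^{t_1}$. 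Modulo $K_{\theta\vert_V}(V)$, these $p$ factors all become the same element $\bar z_1^{-1}\overline{z_1^{t_1}}$ (or $\bar z_1^{\,q-1}\overline{z_1^{t_1}}$ if $z_1=x_1$), while everything else dies.
\item The $t_1^k$-conjugates of this relation then force $p$-th powers of nontrivial elements of a minimal generating system to vanish. This produces torsion in $V/K_{\theta\vert_V}(V)$, or in $V^{\ab}$ if $q=0$.
\end{itemize}

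Finally, your reduction of the general case to $d=2$, $z_1=x_1$, $z_2=x_2$ ``by symmetry'' is not available. The pair carrying $x_1^q$ is distinguished, which is why the paper treats $i_1=1$ and $i_1>1$ separately, and no symmetry changes $d$.
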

 
Thus, the pro-$p$ groups in the families $\mathcal{F}_1$ and $\mathcal{F}_2$ are genuine new examples of pro-$p$ groups that do not --- but are very ``close'' to --- occur as maximal pro-$p$ Galois groups.
We underline that, although Demu\v skin groups are some of the (extremely few) examples of pro-$p$ groups which are known to be 1-cyclotomic, the presence of a further, elementary and apparently harmless ``variation'' (in particular in $\mathcal{F}_2$) is enough to lose 1-cyclotomicity.

\subsection{Galois-like properties}\label{ssec:galois-like}
Next, we study the pro-$p$ groups in the families $\mathcal{F}_1$ and $\mathcal{F}_2$, focusing on several important theorems and conjectures on maximal pro-$p$ Galois groups: the {\sl Norm Residue Theorem}, the {\sl Massey Vanishing Conjecture}, and the {\sl Koszulity Conjectures}, which we briefly recall below.

\smallskip

 \noindent{\bf (1)} The {\sl Norm Residue Theorem} (proved by M.~Rost and V.~Voevodsky, with a ``patch'' by Ch.~Weibel, for an overview see, e.g., \cite{HW:book}) implies that the $\F_p$-cohomology algebra 
    $$\bfH^\bullet(G_{\K}(p),\F_p)=\bigsqcup_{n\geq0}\rmH^n(G_{\K}(p),\F_p),$$ with $\K$ a field containing a root of unity of order $p$, endowed with the cup-product, is a {\sl quadratic} $\F_p$-algebra (see Section~\ref{ssec:H} below): i.e., it is generated in degree 1, and its defining relations are generated in degree 2 (see, e.g., \cite{qsv:quadratic} and references therein).

\smallskip
\noindent{} {\bf (2)}    For $n\geq 2$ and $G$ a pro-$p$ group, the {\sl $n$-fold Massey product} is a multi-valued product which associates a sequence of elements $\alpha_1,\ldots,\alpha_n$ of $\rmH^1(G,\F_p)$ to a subset 
    $$\langle\alpha_1,\ldots,\alpha_n\rangle\subseteq\rmH^2(G,\F_p)$$ (see Section~\ref{ssec:Massey} below).
    E.~Matzri proved that if $G=G_{\K}(p)$ with $\K$ a field containing a root of unity of order $p$, then every 3-fold Massey product contains 0 (see \cites{eli:Massey,EM:Massey}), and it was conjectured by J.~Mina\v c and N.D.~T\^an that every $n$-fold Massey product contains 0 for $n\geq3$ (see \cite{mt:conj}). 
    
\noindent  For an overview on Massey products in Galois cohomology see \cite{mt:Massey}.
    
\smallskip
    \noindent{\bf (3)} {\sl Koszul algebras} are a particular kind of quadratic $\F_p$-algebras: a graded $\F_p$-algebra $A=\bigoplus_{i\geq 0}A_i$, with $A_0=\F_p$, is Koszul if the trivial $A$-module $M=\F_p$ admits a linear free resolution, that is an exact sequence \begin{equation}\label{eq:linearRes}
    \dots\to F_1\to F_0\to M\to 0,\end{equation} where $F_i$ is a free $A$-module generated by elements of degree $i$. Koszul algebras boast a very nice behavior --- e.g., a ``small'' projective resolution may easily be computed, and they are completely determined by their $\ext$-algebra $\ext_A^\bullet(\F_p,\F_p)$.

\noindent
    It was conjectured by L.~Positselski that the $\F_p$-cohomology algebra $\bfH^\bullet(G_{\K}(p),\F_p)$, with $\K$ a field containing a root of unity of order $p$, is Koszul; and later, it was conjectured by J.Mina\v c et al. that such an algebra satisfies even a stronger condition, called {\sl universal Koszulity}. An $\F_p$-algebra $A$ is universally Koszul if there exists a linear free resolution as in (\ref{eq:linearRes}), where $M$ is any quotient $A/I$, where $I$ is a (left) ideal of $A$ generated in degree $1$.

\noindent    
    Moreover, a strengthening of Positselki's conjecture was proposed by Th.~Weigel \cite{thomas:koszul} who predicted that also the graded group $\F_p$-algebra 
    $$\mathrm{gr}\,\F_p[\![G_{\K}(p)]\!]=\coprod_{n\geq0}I_G^n/I_G^{n+1},$$ induced by the augmentation ideal $I_G\lhd\F_p[\![G_{\K}(p)]\!]$ (see Section~\ref{sec:gradedAlgebras} below), is quadratic, Koszul, and it is the {\sl quadratic dual} of $\bfH^\bullet(G_{\K}(p),\F_p)$.

\noindent
For an overview of Koszulity in Galois cohomology see \cites{posi,MPQT,expo,MPPT}.

\smallskip

We prove that the $\F_p$-cohomology of the pro-$p$ groups in the families $\mathcal{F}_1,\mathcal{F}_2$ matches the aforementioned theorems and conjectures.

\begin{prop}\label{prop:properties intro}
	Let $G$ be a pro-$p$ group lying in the family $\mathcal{F}_1$ or in the family $\mathcal{F}_2$.
	\begin{itemize}
		\item[(1)] The $\F_p$-cohomology algebra $\bfH^\bullet(G,\F_p)$ is quadratic.
		\item[(2.a)] If $G\in\mathcal{F}_1$ and $n\leq q$, then $G$ satisfies a strong variant of the $n$-fold Massey vanishing property.
        \item[(2.b)] If $G\in\mathcal{F}_2$, then $G$ satisfies the $3$-fold and the 4-fold Massey vanishing properties.
		\item[(3)] The algebras $\bfH^\bullet(G,\F_p)$ and $\mathrm{gr}\,\FppG$ are Koszul, and they are quadratic dual to each other. Moreover, the former is universally Koszul.
	\end{itemize}
\end{prop}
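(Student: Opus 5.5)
A natural route is through mild presentations and the graded restricted Lie algebra. By Labute's theory, if the initial forms of the defining relations are strongly free (inert) in the free restricted Lie algebra, resp. in $\gr\F_p[\![F]\!]$, then $G$ has cohomological dimension $2$. In that case $\bfH^\bullet(G,\F_p)$ is concentrated in degrees $0,1,2$. Moreover $\gr\FppG$ is the quotient of the free associative algebra on $X_1,Y_1,\ldots,X_d,Y_d$ by the ideal generated by the initial forms.

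For $\mathcal{F}_1$ with $q\geq p$ (and $q\geq4$ if $p=2$), the relation lies in $F_{(2)}$. Its initial form in the Zassenhaus filtration is $\sum_{i\geq2}[X_i,Y_i]$, plus $[X_1^q,Y_1]$ only if $q=1$, which is excluded. So the initial form is $\rho=\sum_{i=2}^d[X_i,Y_i]$, which is nonzero since $d\geq2$, and it is homogeneous of degree $2$. For $\mathcal{F}_2$ the initial forms are $\rho_1=\sum_{i=1}^d[X_i,Y_i]$ (as $x_1^q$ has degree $\geq q>2$) and $\rho_2=[Z_1,Z_2]$.

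Step 1 is to prove inertness. One relation $\rho$ is always strongly free by Labute. For the pair $\rho_1,\rho_2$, I would verify the Anick criterion by checking that the Hilbert series of $A=\F_p\langle X,Y\rangle/(\rho_1,\rho_2)$ equals $(1-2dt+2t^2)^{-1}$. To do this, choose a monomial order in which the leading monomials of $\rho_1$ and $\rho_2$ have no overlaps. For example, make $\rho_2$ lead with $Z_1Z_2$, and let $\rho_1$ lead with $X_jY_j$ for a pair $\{X_j,Y_j\}$ disjoint from, or suitably placed relative to, $\{Z_1,Z_2\}$. This is possible because $\{z_1,z_2\}$ is not a pair $\{x_i,y_i\}$ and $d\geq2$. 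I expect this choice of ordering to be the main obstacle: the case where $Z_1,Z_2$ touch two different pairs needs care so that the two leading words form a combinatorially free set. Once it works, the relations form a Gröbner basis with no overlaps, which gives inertness, $\cd G=2$, and $\gr\FppG=A$ quadratic.

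Step 2 concerns cohomology, quadraticity, and Koszulity. Since $\cd G=2$ and the relations are independent modulo $F^p[F,F]$ (the presentation is minimal), $\rmH^2$ has dimension $1$, resp. $2$. The cup product $\rmH^1\times\rmH^1\to\rmH^2$ is read off from the initial forms via the usual duality between relations and cup products, with the Bockstein/$p$-power contributions vanishing since $q>2$. This makes $\bfH^\bullet$ equal to the quadratic dual $A^!$ in degrees $\leq2$. For $\mathcal{F}_1$ the cup product is surjective because $\rho\neq0$. For $\mathcal{F}_2$ surjectivity onto the $2$-dimensional $\rmH^2$ follows from linear independence of $\rho_1$ and $\rho_2$. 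Next we need $A^!_3=0$. This holds because $A$ has global dimension $2$ with a linear resolution, the standard consequence of a quadratic Gröbner basis. Hence $A$ is Koszul, $A^!$ is Koszul, and $\bfH^\bullet\cong A^!$ is quadratic. For universal Koszulity of $\bfH^\bullet$, I would invoke the known criterion for quadratic algebras concentrated in degrees $\leq2$ with $\dim A^!_2\leq2$. Alternatively, one checks directly that for every $\alpha\in\rmH^1$ the annihilator $\mathrm{Ann}(\alpha)$ is generated in degree $1$, which is a linear-algebra verification on the explicit cup-product form.

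Step 3 treats the Massey products. For $\mathcal{F}_1$ I would use the unipotent-representation criterion: $\langle\alpha_1,\ldots,\alpha_n\rangle$ vanishes if there is a homomorphism $G\to\mathbb{U}_{n+1}(\F_p)$ with prescribed superdiagonal. Define images of the $x_i,y_i$ so that all commutators $[x_i,y_i]$ with $i\geq2$ map trivially, and $x_1^q\mapsto1$. The latter is automatic since $\mathbb{U}_{n+1}(\F_p)$ has exponent dividing $q$ when $n\leq q$. For $\mathcal{F}_2$ with $n=3,4$, I would use the same lifting approach. One chooses lifts of the generators into $\mathbb{U}_{n+1}$ whose superdiagonal entries are given by the $\alpha_i$, then corrects the off-superdiagonal entries, which is a finite linear system. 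The $\mathcal{F}_2$ case with $n=4$ is likely laborious but routine.
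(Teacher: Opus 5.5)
Your Steps 1--2 are fine, and they give a self-contained alternative to the paper's citations. The paper gets (1) and universal Koszulity from Proposition~\ref{prop:quadratic few rel}, and gets (3) from mildness plus the cited results on mild groups. In your route, non-overlapping leading monomials give a quadratic Gr\"obner basis, hence mildness, $\gr\FppG=A$ Koszul of global dimension $2$, and $\bfH^\bullet(G)\cong A^!$.

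The gap is in Step~3, starting with (2.a). You ask that every $[x_i,y_i]$, $i\geq2$, map to $I_{n+1}$. This is impossible in general. Condition \eqref{eq:cup 0 massey} only forces $\sum_{i\geq2}\bigl(\alpha_h(x_i)\alpha_{h+1}(y_i)-\alpha_h(y_i)\alpha_{h+1}(x_i)\bigr)=0$. But the $(h,h+2)$-entry of $[A_i,B_i]$ is the $i$-th summand alone, whatever lifts you choose. For instance, take $d=n=3$ and the classes $\alpha_1=\alpha_3=x_2^\ast+x_3^\ast$, $\alpha_2=y_2^\ast-y_3^\ast$. They satisfy \eqref{eq:cup 0 massey}, but $[A_2,B_2]$ has $(1,3)$-entry $1$.

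What you actually need is $\prod_{i\geq2}[A_i,B_i]=I_{n+1}$. Producing such matrices for arbitrary $n$ is exactly the strong $n$-fold Massey vanishing of the Demu\v skin group $\bar G=G/\langle\!\langle x_1,y_1\rangle\!\rangle$ (Example~\ref{ex:demushkin massey}--(a)). That is a substantial theorem, and your argument never invokes it. You must also check that \eqref{eq:cup 0 massey} passes from $G$ to $\bar G$; it does, since both $\rmH^2$'s are one-dimensional with the same coefficient.

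Separately, $\dbU_{n+1}$ does not have exponent dividing $q$ when $n=q$: the matrix $I+\sum_hE_{h,h+1}$ has $q$-th power $I+E_{1,q+1}$. What is true is $A^q\in\Zen(\dbU_{n+1})$. That suffices, because the relation contains $[x_1^q,y_1]$ rather than $x_1^q$.

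For (2.b), saying you will ``correct the off-superdiagonal entries, a finite linear system'' gives no reason the system is solvable, and solvability is the whole content. For $n=4$ the system is not even linear, since layer-$2$ entries multiply each other in the central entry.

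The definedness hypothesis has to enter, e.g.\ through a homomorphism $\bar\rho\colon G\to\bar\dbU_{n+1}$. Then the natural corrections, by matrices supported in positions $(1,n)$ and $(2,n+1)$, change the $(1,n+1)$-entry of $[C_1,C_2]$ only by terms of the form $m_{1,n}\alpha_n(z_j)-\alpha_1(z_j)m_{2,n+1}$. These vanish when $\alpha_1,\alpha_n$ are zero on $z_1,z_2$.

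In that case, for $n=4$, $[C_1,C_2]$ may be a nontrivial central matrix. The paper must then modify $C_1$ or $C_2$ by the layer-$2$ matrix $I_5+\tilde cE_{3,5}$. This breaks the Demu\v skin relation modulo the center in the $(2,5)$-entry. That in turn forces a compensating change $\tilde D$ of the image of $t_1$ or $t_2$, before the final top-layer fix.

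You also skip the two easier cases that the paper separates out. If $\alpha_1$ or $\alpha_n$ is nonzero on some $z_i$, then cupping with it is onto $\rmH^2(G)$, and Lemma~\ref{lem:masey}--(b) gives $0\in\langle\alpha_1,\ldots,\alpha_n\rangle$. If all $\alpha_h$ vanish on $z_1,z_2$, one factors through $G_1\amalg G_2$ with $G_1$ Demu\v skin, and again uses strong Massey vanishing for Demu\v skin groups.
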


A key point for the proof of Proposition~\ref{prop:properties intro}--(3) is the fact that the pro-$p$ group in $\mathcal{F}_1,\mathcal{F}_2$ are {\sl mild}.
Mild pro-$p$ groups were introduced by J.P.~Labute in \cite{labute:mild} to study the Galois groups of pro-$p$ extensions of number fields with restricted ramification --- for an overview on mild pro-$p$ groups see also \cite[\S~2--3]{Jochen}, \cite[\S~2.5]{qsv:quadratic} and \cite[\S~4]{expo}.

Altogether, the pro-$p$ groups in the families $\mathcal{F}_1,\mathcal{F}_2$ ``behave'' pretty much like maximal pro-$p$ Galois groups, apart from 1-cyclotomicity.
This suggests that 1-cyclotomicity provides a rather refined and powerful tool to study maximal pro-$p$ Galois groups, and it succeeds in detecting pro-$p$ groups which do not occur as maximal pro-$p$ Galois groups when other methods fail.

\subsection{Graded restricted Lie algebras}\label{ssec:intro group algebras}

Finally, we focus on the graded restricted Lie algebras $\mathrm{gr}\:G$, with $G\in\mathcal{F}_1\cup\mathcal{F}_2$, induced by the $p$-Zassenhaus filtration of $G$, which may be seen as ``linearizations'' of the pro-$p$ groups $G$.
On the other hand, they are interesting objects on their own.
Since the pro-$p$ groups in $\mathcal{F}_1,\mathcal{F}_2$ are mild, the associated graded restricted Lie algebras may be described easily.
Explicitly, for $G$ in $\mathcal{F}_1$, respectively $G$ in $\mathcal{F}_2$, $\mathrm{gr}\:G$ has presentation
\[
\mathrm{gr}\,G =\begin{cases}
    \langle\:X_1,Y_1,\ldots,X_d,Y_d\:\mid\:[X_2,Y_2]+\ldots+[X_d,Y_d]=0\:\rangle,\\
    \langle\:X_1,Y_1,\ldots,X_d,Y_d\:\mid\:[X_1,Y_1]+\ldots+[X_d,Y_d]=[Z_1,Z_2]=0\:\rangle
\end{cases}\]
respectively --- in the latter $Z_1,Z_2\in\{X_1,\ldots,Y_d\}$ and $\{Z_1,Z_2\}\neq\{X_i,Y_i\}$ for any $1\leq i\leq d$, in accordance with $z_1,z_2\in G$.

The notion of Koszul algebra may be transposed to the setting of graded restricted Lie algebras (see \cite[\S 1.3]{simone:koszul} and \S~\ref{sec:gradedAlgebras} below).
For $G$ in $\mathcal{F}_1$ or in $\mathcal{F}_2$,
the associated restricted Lie algebra $\mathrm{gr}\:G$ satisfies a stronger version of Koszulity, as stated by the following.

\begin{prop}\label{prop:gradedalgebras intro}
Let $G$ be a pro-$p$ group lying in the family $\mathcal{F}_1$ or in the family $\mathcal{F}_2$. 
If $p\neq 2$ the restricted Lie algebra $\mathrm{gr}\,G$ is Bloch-Kato, namely, every subalgebra of $\mathrm{gr}\,G$ generated in degree 1 is Koszul.
\end{prop}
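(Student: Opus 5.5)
We reduce the statement to a general criterion for graded restricted Lie algebras of ``one-relator plus small'' type. By the presentations above, $\mathrm{gr}\,G=L/\mathfrak r$, where $L$ is the free restricted Lie algebra on $X_1,\ldots,Y_d$ and $\mathfrak r$ is generated by quadratic Lie elements. Let $\mathfrak h$ be a subalgebra of $\mathrm{gr}\,G$ generated by a subspace $V\subseteq(\mathrm{gr}\,G)_1$. The goal is to show that its restricted universal enveloping algebra $u(\mathfrak h)$ is Koszul. This is what Koszulity of $\mathfrak h$ means in the sense of \cite[\S 1.3]{simone:koszul}.

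The plan is to classify $\mathfrak h$ up to isomorphism according to how $V$ meets the relations. Since every relation lies in $\Lambda^2(\mathrm{gr}\,G)_1$, the quadratic part of $\mathfrak h$ is governed by $\mathfrak r_2\cap\Lambda^2V$, where $\mathfrak r_2$ is the span of the relations. For $p\neq2$ the restricted structure in degree $2$ is free, because $p$-th powers only appear in degree $p\geq3$. Hence the degree-$2$ computation takes place in the ordinary exterior square. I expect to show the following dichotomy. If $\mathfrak r_2\cap\Lambda^2V=0$, then $\mathfrak h$ is free. If the intersection is nonzero, then $\mathfrak h$ is again a mild algebra with quadratic relators, namely a one- or two-relator algebra of Demu\v skin-like shape.

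To justify ``generated in degree 1 implies freeness or mildness'', I will use the mildness of $\mathrm{gr}\,G$, which is established earlier for the proof of Proposition~\ref{prop:properties intro}. The intended tool is a Lie-algebraic Kurosh/Shirshov--Witt type argument: subalgebras of free restricted Lie algebras are free. Working modulo the relations, an Anick-type resolution shows that $\mathfrak h$ has $\ext^2$ concentrated in degree $2$ and no higher Ext. Then $\mathfrak h$ is mild with quadratic relations, and mild quadratic algebras are Koszul, as recalled in \S~\ref{sec:gradedAlgebras}.

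For the case analysis I first treat $\mathcal F_1$. The single relator $\sum_{i\ge2}[X_i,Y_i]$ is a nondegenerate alternating form on the span $W$ of $X_2,\ldots,Y_d$, extended by zero on $\langle X_1,Y_1\rangle$. Then $\mathfrak r_2\cap\Lambda^2V\neq0$ forces $V\supseteq W$, after a change of basis. In that case $\mathfrak h$ is a free product of a Demu\v skin-type algebra on $W$ with a free algebra on $V\cap\langle X_1,Y_1\rangle$. Both factors are Koszul, and free products preserve Koszulity. For $\mathcal F_2$ the relation space $\mathfrak r_2$ is $2$-dimensional, with one symplectic element $\omega$ and one decomposable element $Z_1\wedge Z_2$. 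The intersection $\Lambda^2V\cap\mathfrak r_2$ is then spanned by some element $a\omega+bZ_1\wedge Z_2$, or it is all of $\mathfrak r_2$. In each case $\mathfrak h$ is a one- or two-relator quadratic algebra. I need to check that the relators are still strongly free, via the Hilbert series criterion $1-\dim V\,t+r\,t^2$.

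The main obstacle is this last verification. Mildness of $\mathfrak h$ does not follow from mildness of $\mathrm{gr}\,G$ automatically. I would first try to show that the inclusion $u(\mathfrak h)\to u(\mathrm{gr}\,G)$ makes $u(\mathrm{gr}\,G)$ a free $u(\mathfrak h)$-module, using PBW bases. Then the Hilbert series of $u(\mathfrak h)$ would divide that of $u(\mathrm{gr}\,G)$ appropriately, and Anick's criterion would give the result. If that fails, I would instead compute the Gröbner basis of the relators restricted to $V$ by hand in each case. This is feasible because the leading monomials $X_iY_i$ and $Z_1Z_2$ have no overlaps, given the hypothesis $\{z_1,z_2\}\neq\{x_i,y_i\}$.
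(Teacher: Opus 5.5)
Your degree-$2$ bookkeeping is correct, and it can be sharpened. In $\mathcal F_2$ the element $\omega+\lambda\,Z_1\wedge Z_2$ is still nondegenerate on $\mathrm{gr}_1G$, because $t_1,t_2$ are paired only with $z_1,z_2$. Hence for $V\neq\mathrm{gr}_1G$ one always has $\Lambda^2V\cap\mathfrak r_2\subseteq\langle Z_1\wedge Z_2\rangle$.

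However, there is a genuine gap at the core of the proposal. The dichotomy you ``expect to show'' says that $\mathfrak h=\langle V\rangle$ is presented by $V$ and $\Lambda^2V\cap\mathfrak r_2$ alone. For $p$ odd this is exactly the statement to be proved: it is weak Bloch-Kato-ness, which already implies Bloch-Kato-ness by \cite[Cor.~2.3]{simone:koszul}. None of the tools you list establishes it.
\begin{itemize}
\item The Shirshov--Witt theorem concerns subalgebras of \emph{free} algebras, and $\mathrm{gr}\,G$ is not free.
\item An Anick resolution of $u(\mathrm{gr}\,G)$ says nothing about $\ext^\bullet_{u(\mathfrak h)}(\F_p,\F_p)$.
\item PBW does make $u(\mathrm{gr}\,G)$ a free $u(\mathfrak h)$-module. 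But this only gives $h_{u(\mathrm{gr}\,G)}=h_{u(\mathfrak h)}\cdot h_Q$, with $Q=u(\mathrm{gr}\,G)\otimes_{u(\mathfrak h)}\F_p$ of unknown Hilbert series. Since any power series with constant term $1$ divides any other, you learn nothing about $h_{u(\mathfrak h)}$, which is what Anick's criterion needs.
\item Checking that the elements of $\Lambda^2V\cap\mathfrak r_2$ are strongly free in $T(V)$ only shows that the quadratic closure $\mathfrak h^{\mathrm q}=\langle V\mid\Lambda^2V\cap\mathfrak r_2\rangle$ is mild.
\end{itemize}
The real issue is whether the canonical surjection $\mathfrak h^{\mathrm q}\to\mathfrak h$ is injective. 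Equivalently, the relation ideal $\langle V\rangle_L\cap\mathfrak r$ of $\mathfrak h$ must be generated in degree $2$, where $\langle V\rangle_L\subseteq L$ is the free subalgebra generated by $V$. This is a Freiheitssatz-type statement. A Gr\"obner basis of the relators of $\mathrm{gr}\,G$ for a fixed monomial order does not compute $\langle V\rangle_L\cap\mathfrak r$ when $V$ lies in general position with respect to the generators, e.g.\ $V=\langle X_1+X_2,\,Y_2\rangle$ in $\mathcal F_1$.

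The gap is concrete already in $\mathcal F_1$. Your case $V\supseteq W$ is fine. But suppose $V\not\supseteq W$ and $V$ is not compatible with the splitting $\langle X_1,Y_1\rangle\oplus W$. Then proving that $\mathfrak h$ is free requires two ingredients:
\begin{itemize}
\item a Kurosh-type subalgebra theorem for free products of restricted Lie algebras, which is a genuine theorem and not a formality for Lie algebras;
\item the fact that proper degree-one-generated subalgebras of a Demu\v skin Lie algebra are free.
\end{itemize}
These are exactly the inputs the paper imports rather than reproves. After reading off the presentation of $\mathrm{gr}\,G$ from mildness, it invokes \cite[Thm.~4.3]{simone:kurosh} for $\mathcal F_1$, viewed as a free product of a free and a Demu\v skin Lie algebra. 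For $\mathcal F_2$ it invokes Theorem~\ref{thm:2rel}: every quadratic restricted Lie algebra with at most two relations is Bloch-Kato \cite[Thm.~4.3]{simone:koszul}, a result built on the techniques of \cite{cq:2relUK}.

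To make your route work, you must either cite these results or actually prove that $\mathfrak h^{\mathrm q}\to\mathfrak h$ is injective. As written, the argument assumes its conclusion.
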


Proposition~\ref{prop:properties intro}--(1) raises a question inspired also by Proposition~\ref{prop:gradedalgebras intro}.
A pro-$p$ group is said to be {\sl Bloch-Kato} if the $\F_p$-cohomology of every closed subgroup is quadratic (see \cite{cq:bk} and Section~\ref{ssec:H} below) --- so, Bloch-Kato-ness is a ``hereditary'' property, like 1-cyclotomicity.
Proposition~\ref{prop:properties intro}--(1) states that $\bfH^\bullet(G,\F_p)$ is quadratic for $G$ in $\mathcal{F}_1\cup \mathcal{F}_2$, but it says nothing about the subgroups of $G$.
Hence, similarly to Proposition \ref{prop:gradedalgebras intro}, it is natural to ask the following.

\begin{question}\label{ques:BK}
	Are the pro-$p$ groups in the families $\mathcal{F}_1,\mathcal{F}_2$ Bloch-Kato pro-$p$ groups?
\end{question}
In order to prove Theorems~\ref{thmG2}--\ref{thmA1} we studied some open subgroups of the pro-$p$ groups in $\mathcal{F}_1,\mathcal{F}_2$, which turned out to have quadratic $\F_p$-cohomology --- see Remarks~\ref{rem:U BK 1}--\ref{rem:G2 U kummer}--\ref{rem:G2 V quad} ---, so answering Question~\ref{ques:BK} will require a more thorough investigation.

Although all known examples indicate a close connection between the Bloch–Kato property of a pro-p group and that of its associated Lie algebra, it remains open whether a pro-p group with quadratic cohomology and Bloch–Kato associated Lie algebra is necessarily Bloch–Kato.
\subsection{An open question}

We conclude with a question on another variation of Demu\v skin groups, which has not been studied in this work.
Consider the pro-$p$ group $G$ with minimal presentation
\begin{equation}\label{eq:pres question}
	G=\langle\:x_1,y_1,\ldots,x_d,y_d\:\mid\:[x_1,y_1]=[x_2,y_2]=\ldots=[x_d,y_d]\:\rangle_{\hat p},
\end{equation}
with $d\geq3$.
Namely, $G$ is the amalgamated free pro-$p$ product 
\[
G=G_1\amalg_{H_1}G_2\amalg_{H_2}\cdots\amalg_{H_{d-2}}G_{d-1}
\]
of $d-1$ isomorphic Demu\v skin groups on four generators
\[
G_i=\langle\:x_i,y_i,x_{i+1},y_{i+1}\:\mid\:[x_i,y_i]=[x_{i+1},y_{i+1}]\:\rangle_{\hat p}
\qquad i=1,\ldots,d-1\]
with amalgams the intersection $H_i=G_i\cap G_{i+1}=\langle x_{i+1},y_{i+1}\rangle$.

\begin{question}
	Is the pro-$p$ group $G$ in \eqref{eq:pres question}
	1-cyclotomic?
\end{question}

We suspect that the above question has negative answer, like for the pro-$p$ groups in $\mathcal{F}_1,\mathcal{F}_2$, and thus it does not occur as the maximal pro-$p$ Galois group of a field containing a root of unity of order $p$.

Last, but not least, we deem the pro-$p$ groups in the families $\mathcal{F}_1$ and $\mathcal{F}_2$ good showcases, useful to display the techniques that may be employed to study, in a concrete way, some cohomological features of pro-$p$ groups relevant from a Galois-theoretic point of view, such as 1-cyclotomicity and Massey products.
Also for this reason, we hope that our work may be appreciated --- in particular --- by students and young researchers who are approaching Galois cohomology of pro-$p$ groups.

{\small \subsection*{Acknowledgments}
	The first-named author gratefully acknowledges Thomas Weigel for his support and for allowing him to continue his research at the University of Milano-Bicocca during a period without formal affiliation.
	The authors acknowledge their membership to the national group GNSAGA (Algebraic Structures and Algebraic Geometry) of the National Institute of Advanced Mathematics -- a.k.a. INdAM -- ``F. Severi''.
}

\section{Preliminaries}\label{sec:preli}

\subsection{Notation}\label{ssec:notation}
Henceforth, every subgroup of a pro-$p$ group will be tacitly assumed to be closed with respect to the pro-$p$ topology, and sets of generators, and presentations, of pro-$p$ groups will be intended in the topological sense.
Given two elements $x,y\in G$ of a pro-$p$ group $G$, we adopt the notation
\[
y^x=x^{-1}\cdot y\cdot x\qquad\text{and}\qquad[x,y]=(y^{-1})^x\cdot y=x^{-1}y^{-1}xy.
\]
Given a positive integer $n$, one has the normal subgroups
\[ G^{p^n} =\left\langle\: x^{p^n}\:\mid\:x\in G\:\right\rangle\qquad\text{and}\qquad
G' =\left\langle\:[x,y]\:\mid\:x,y\in G\:\right\rangle.
\]
More in general, if $H_1,H_2$ are subgroups of $G$, $[H_1,H_2]$ denotes the subgroup of $G$ generated by the elements $[x,y]$ where $x\in H_1$ and $y\in H_2$.
Finally, the Frattini subgroup of $G$ is $\Phi(G)=G^p\cdot G'$.

Later on, we will make use of the following fact.

\begin{fact}\label{fact}
	Let $G$ be a finitely generated pro-$p$ group, and let $\calX$ be a subgroup of $G$.
	Then $\calX$ is a minimal generating set of $G$ if and only if $\bar\calX=\{xG'\mid x\in\calX\}$ is a minimal generating set of the abelianization $G^{\ab}$ as a $\Z_p$-module.
\end{fact}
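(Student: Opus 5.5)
The statement to prove is Fact~\ref{fact}: a subset $\calX$ of a finitely generated pro-$p$ group $G$ is a minimal generating set if and only if its image $\bar\calX$ in $G^{\ab}$ is a minimal generating set of $G^{\ab}$ as a $\Z_p$-module. (Here ``subgroup'' in the statement should be read as ``subset''.) I plan to route both conditions through the Frattini quotient $G/\Phi(G)$, which is a finite-dimensional $\F_p$-vector space since $G$ is finitely generated. The first ingredient is the Burnside basis theorem for pro-$p$ groups: $\calX$ generates $G$ if and only if its image generates $G/\Phi(G)$. This holds because every maximal open subgroup of a pro-$p$ group is normal of index $p$, hence contains $\Phi(G)$. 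So a proper closed subgroup $\langle\calX\rangle$ lies in some maximal open subgroup, and its image in $G/\Phi(G)$ is then proper.

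The second ingredient is the corresponding statement for the finitely generated abelian pro-$p$ group $A=G^{\ab}$, a finitely generated $\Z_p$-module. Its Frattini quotient is $A/pA$, and by topological Nakayama a subset generates $A$ if and only if its image generates $A/pA$. Moreover
\[
A/pA=G/G'G^p=G/\Phi(G),
\]
so both Frattini quotients are the same $\F_p$-vector space $V$.

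Combining these, $\calX$ generates $G$ if and only if its image spans $V$, if and only if $\bar\calX$ generates $A$. For minimality, I use that a subset of $V$ spans minimally exactly when it maps injectively onto a basis. Suppose $\calX$ generates $G$. Then $\calX$ is minimal if and only if no proper subset $\calX\smallsetminus\{x\}$ generates. By the equivalence just shown, this holds if and only if no proper subset of the image spans $V$, which in turn holds if and only if no proper subset of $\bar\calX$ generates $A$, i.e.\ $\bar\calX$ is a minimal generating set of $A$.

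One detail needs checking: the map $\calX\to\bar\calX$ must be injective in the minimal case. If two distinct elements of $\calX$ had the same image in $G^{\ab}$, then removing one of them would leave a set whose image in $V$ is unchanged. That smaller set would still generate $G$, contradicting minimality. The converse direction is handled by the same argument. The only real input is the pro-$p$ Burnside basis theorem, which I take as standard (e.g.\ \cite{serre:galc}); the rest is bookkeeping.
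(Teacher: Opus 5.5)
The paper records this as a standard Fact and gives no proof, so there is no paper proof to compare your route against. Your route through the common Frattini quotient $V=G/\Phi(G)=G^{\ab}/pG^{\ab}$ (pro-$p$ Burnside basis theorem for $G$, Nakayama for $G^{\ab}$) is the standard one. Your forward direction ($\calX$ minimal $\Rightarrow$ $\bar\calX$ minimal) is correct. Note that your removal argument actually shows $\calX\to V$ is injective, not only $\calX\to G^{\ab}$, and that is what your chain of equivalences needs.

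The gap is your final sentence, ``the converse direction is handled by the same argument.'' It is not. Suppose $\bar\calX$ is a minimal generating set of $G^{\ab}$ and $x\neq x'$ in $\calX$ satisfy $xG'=x'G'$. Deleting $x'$ from $\calX$ leaves the \emph{set} $\bar\calX$ unchanged, so nothing contradicts the minimality of $\bar\calX$. Correspondingly, your step ``no $\calX\smallsetminus\{x\}$ generates $G$ iff no proper subset of the image spans $V$'' is valid only once $\calX\to V$ is known to be injective.

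In fact the reverse implication is false as the Fact is literally worded. Let $F$ be the free pro-$p$ group on $x,y$ and $\calX=\{x,y,x[x,y]\}$. Then $\bar\calX=\{xF',yF'\}$ is a basis of $F^{\ab}\simeq\Z_p^2$, but $\calX$ is not a minimal generating set of $F$ in either sense, since $\{x,y\}$ already generates $F$.

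What your argument does prove is the corrected statement: $\calX$ is a minimal generating set of $G$ if and only if $x\mapsto xG'$ is injective on $\calX$ and $\bar\calX$ is a minimal generating set of $G^{\ab}$. Equivalently, read $\bar\calX$ as the indexed family $(xG')_{x\in\calX}$. Both sides then say that $\calX$ maps bijectively onto a basis of $V$. You should state this hypothesis explicitly rather than claim the converse. This corrected form is also all that is needed where the Fact is applied in Lemma~\ref{lem:G1 Vab gen}, since there the generators come with linearly independent images in the Frattini quotient.
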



\subsection{$\F_p$-cohomology of pro-$p$ groups}\label{ssec:H}
For an account on $\F_p$-cohomology of pro-$p$ groups, we refer to \cite[Ch.~I, \S~4]{serre:galc} and \cite[Ch.~III, \S~9]{nsw:cohn}.

Given a pro-$p$ group $G$, consider the abelian group $\F_p$ as a trivial $G$-module.
For $n\geq0$, we write simply $\rmH^n(G)$ for the $n$th cohomology group of $G$ with coefficients in the trivial $G$-module $\F_p$.
We recall that for a finitely generated pro-$p$ group $G$ one has the following:
\begin{itemize}
	\item[(a)] $\rmH^1(G)=\mathrm{Hom}(G,\F_p)$, and there is an isomorphism of finite $\F_p$-vector spaces
	\[
	\rmH^1(G)\simeq (G/\Phi(G))^\ast,
	\]
	where $\textvisiblespace^\ast$ denotes the $\F_p$-dual --- in particular, $\dim(\rmH^1(G))$ is the cardinality of a minimal generating set of $G$ --- (cf., e.g., \cite[Ch.~I, \S~4.2]{serre:galc});
	\item[(b)] $\dim(\rmH^2(G))$ is the cardinality of a minimal set of defining relations of $G$ (cf., e.g., \cite[Ch.~I, \S~4.3]{serre:galc}).
\end{itemize}

In particular, if $\mathcal{X}=\{x_1,\ldots,x_d\}$ is a minimal generating set of a finitely generated pro-$p$ group $G$, then $\rmH^1(G)$ has a basis $\{x_1^\ast,\ldots,x_d^\ast\}$, where $x_i^\ast(x_j)=\delta_{ij}$ for all $1\leq i,j\leq d$, and moreover $\alpha=\alpha(x_1)x_1^\ast+\ldots+\alpha(x_d)x_d^\ast$ for every $\alpha\in\rmH^1(G)$.

The $\F_p$-vector space
\[
\bfH^\bullet(G)=\coprod_{n\geq0}\rmH^n(G),
\]
endowed with the graded-commutative cup-product, is a graded $\F_p$-algebra (for the definition and the properties of the cup-product see, e.g., \cite[Ch.~I, \S~4--6]{nsw:cohn}).
Recall that $\bfH^\bullet(G)$ is said to be quadratic if the homomorphism of graded algebras
\begin{equation}
	\mu\colon \coprod_{n\geq0}\rmH^1(G)^{\otimes n}\longrightarrow \bfH^\bullet(G),
\end{equation}
which extends linearly $\alpha_1\otimes\cdots\otimes\alpha_m\mapsto \alpha_1\smallsmile\cdots\smallsmile\alpha_m$,
is surjective, and its kernel is generated as a graded ideal by a subspace of $\rmH^1(G)^{\otimes 2}$
(cf., e.g., \cite[\S~2.1]{qsv:quadratic}).

We will make use of the following proposition, which recaps the procedure to sort out the $\F_p$-cohomology ring of a pro-$p$ group with at most two defining relations, from the shape of these relations (cf. \cite[Prop.~4.2]{cq:onerel} and \cite[Exam.~2.7--2.8 and \S~3.2--3.3]{cq:2relUK}).

\begin{prop}\label{prop:quadratic few rel}
	Let $G$ be a finitely generated pro-$p$ group with 
$$\dim(\rmH^1(G))=d\geq2\qquad\text{and}\qquad1\leq \dim(\rmH^2(G))\leq2.$$
	Suppose that, for some ordered minimal generating set $\calX=\{x_1,\ldots, x_d\}$ of $G$, the defining relations of $G$ may be written as
	\[
	\begin{split}
		1=&\prod_{1\leq i<j\leq d} \left[x_i,x_j\right]^{a_{ij}}\cdot r_1'\qquad \text{with }0\leq a_{ij}\leq p-1\\
		1=&\prod_{1\leq i<j\leq d} \left[x_i,x_j\right]^{b_{ij}}\cdot r_2'\qquad \text{with }0\leq b_{ij}\leq p-1
	\end{split}
	\]
	--- the latter occurring if $\dim(\rmH^2(G))=2$ ---, with $a_{1,2}\neq0$, $b_{1,2}=0$, and $b_{hk}\neq0$ for at least a pair $(h,k)\neq(1,2)$; and
	\[r_1',r_2'\in\begin{cases} G^p\cdot[G',G], & \text{if }p\neq2,\\
		G^4\cdot (G')^2\cdot [G',G],&\text{if }p=2.
	\end{cases}\]
	Then one has the following:
\begin{itemize}
    \item[(a)]  $\rmH^1(G)$ has a basis $\{x_1^\ast\ldots,x_d^\ast\}$ such that 
	$$\mathcal{B}=\left\{\:x_1^\ast\smallsmile x_2^\ast,\text{ and }x_h^\ast\smallsmile x_k^\ast\text{ if }\dim(\rmH^2(G))=2\text{ and }b_{hk}\neq0\right\}$$ is a basis of $\rmH^2(G)$, and one has
	\[    x_i^\ast\smallsmile x_j^\ast=\frac{a_{ij}}{a_{1,2}}\left(x_1^\ast\smallsmile x_2^\ast\right)+\frac{b_{ij}}{b_{hk}}\left(x_h^\ast\smallsmile x_k^\ast\right)\qquad\text{for all }1\leq i<j\leq d;
	\]
    \item[(b)] $\cd(G)=2$ --- i.e., $\rmH^n(G)=0$ for $n\geq3$ ---, and $\bfH^\bullet(G)$ is a universally Koszul (hence quadratic) $\F_p$-algebra.
\end{itemize}
\end{prop}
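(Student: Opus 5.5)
The plan is to follow the standard route for pro-$p$ groups with few relations, as in \cite[Prop.~4.2]{cq:onerel} and \cite{cq:2relUK}, passing through the dual relation between the relations and the cup-product. Let $F$ be the free pro-$p$ group on $\calX$ and $R\lhd F$ the normal subgroup generated by the given relations, so $G=F/R$. Since $\calX$ is minimal, $R\subseteq\Phi(F)$, and the five-term exact sequence yields the transgression isomorphism
\[
\mathrm{tg}\colon \rmH^1(R)^F\longrightarrow \rmH^2(G).
\]
Hence $\rmH^2(G)$ is dual to $R/R^p[R,F]$, which has basis the classes of the $1$ or $2$ given relations. For $\alpha,\beta\in\rmH^1(G)=\rmH^1(F)$ and $r\in R$, I will use the classical formula (cf.\ \cite[Ch.~III, \S~9]{nsw:cohn}, with the $p=2$ correction term): if $r\equiv\prod_{i<j}[x_i,x_j]^{c_{ij}}\prod_i x_i^{pe_i}$ modulo $F^{p^2}[F',F]$ (resp.\ $F^4(F')^2[F',F]$ for $p=2$), then
\[
(\alpha\smallsmile\beta)(r)=-\sum_{i<j}c_{ij}\,\alpha(x_i)\beta(x_j)\qquad(\text{up to a global sign convention}),
\]
where $(\alpha\smallsmile\beta)(r)$ means $\mathrm{tg}^{-1}(\alpha\smallsmile\beta)$ evaluated at $r$. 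The hypothesis on $r_1',r_2'$ is designed to kill every contribution from $r_1',r_2'$. For $p$ odd, $G^p$-terms only affect Bocksteins and $[G',G]$ is invisible in degree 2. For $p=2$ one needs $r'\in F^4(F')^2[F',F]$ so that the squares $x_i^\ast\smallsmile x_i^\ast$ also vanish on $r'$. I would double-check this carefully, since it is the point where $p=2$ differs.

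Applying the formula, $x_i^\ast\smallsmile x_j^\ast$ is the functional sending $r_1\mapsto -a_{ij}$ and $r_2\mapsto -b_{ij}$. Then $x_1^\ast\smallsmile x_2^\ast$ sends $(r_1,r_2)\mapsto(-a_{12},0)$ and $x_h^\ast\smallsmile x_k^\ast$ sends them to $(-a_{hk},-b_{hk})$. These are linearly independent, hence form a basis $\mathcal{B}$ of the (at most) $2$-dimensional space $\rmH^2(G)$. Solving the linear system gives the expansion of $x_i^\ast\smallsmile x_j^\ast$ in $\mathcal{B}$. The coefficient of $x_h^\ast\smallsmile x_k^\ast$ is $b_{ij}/b_{hk}$, and the coefficient of $x_1^\ast\smallsmile x_2^\ast$ is $(a_{ij}-a_{hk}b_{ij}/b_{hk})/a_{12}$. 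This agrees with the stated formula exactly when $a_{hk}=0$, or after replacing $r_2$ by a suitable combination. So I would first normalise the relations, or choose $(h,k)$ with $a_{hk}=0$, or read the displayed formula modulo this adjustment, before proving (a). Squares and $x_i^\ast\smallsmile x_i^\ast$ for $p=2$ vanish by the same computation.

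For (b), I would use the quadratic dual side. The cup-product in degree 2 is surjective by (a), and $\rmH^1$ generates. Consider the quadratic algebra $A=\Lambda(\rmH^1)/(\Ker\mu_2)$. Its dual is the universal enveloping algebra of the graded Lie algebra with $d$ generators and the one or two quadratic relations $\sum a_{ij}[X_i,X_j]$, $\sum b_{ij}[X_i,X_j]$. The hypotheses $a_{12}\ne0$, $b_{12}=0$, $b_{hk}\ne0$ give a strongly free (Anick/Labute) sequence, so by Labute's mildness criterion $G$ is mild. Mildness gives $\cd(G)=2$ and $\bfH^\bullet(G)\cong A$ with $A_n=0$ for $n\ge3$; the latter can be checked directly, since every triple product lies in $\mathcal{B}\smallsmile\rmH^1$, and these vanish using the relations. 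So $\bfH^\bullet(G)$ is quadratic. Universal Koszulity of a quadratic algebra concentrated in degrees $\le2$ with $\dim A_2\le2$ I would take from \cite[Exam.~2.7--2.8, \S~3.2--3.3]{cq:2relUK}: for each degree-1-generated ideal $I$ one checks that the annihilator ideals $(0:\alpha)$ are generated in degree 1. The main obstacle is the mildness/strong-freeness verification in the two-relator case; I would try Labute's criterion with an ordering of generators making the leading monomials $X_1X_2$ and $X_hX_k$ combinatorially free.
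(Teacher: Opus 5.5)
Your outline is correct and reconstructs the argument behind the references the paper gives in place of a proof (\cite[Prop.~4.2]{cq:onerel}, \cite{cq:2relUK}). For (a) this is transgression duality between $\rmH^2(G)$ and the quadratic parts of the relations. For (b) it is $\cd(G)=2$ together with the two-relator universal Koszulity theorem. A few points need fixing.

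\textbf{The formula in (a).} Your suspicion about the displayed formula is justified. Taking $(i,j)=(h,k)$ shows it holds only if $a_{hk}=0$. This is true in the paper's applications, where the second relation is $[z_1,z_2]$, but a pair with $b_{hk}\neq0=a_{hk}$ need not exist in general. The correct normalisation is $r_1\mapsto r_1r_2^{c}$ with $c\equiv-a_{hk}/b_{hk}\pmod p$. This keeps $a_{12}$ and all the hypotheses intact. By contrast, altering $r_2$ by a power of $r_1$ would destroy $b_{12}=0$.

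\textbf{Mildness.} Your target pair $\{X_1X_2,X_hX_k\}$ of leading monomials is not combinatorially free when $h=2$, since $X_1X_2$ and $X_2X_k$ overlap. A case split works instead; write $\rho_1,\rho_2$ for the initial forms.
\begin{itemize}
\item In general, use deg-lex with $X_1>X_2>\cdots>X_d$. The leading monomials are then $X_1X_2$ and either $X_1X_j$, or $X_hX_k$ with $h,k\geq3$.
\item If $\rho_2$ involves $X_2$ but not $X_1$, use $X_2>X_1>X_3>\cdots>X_d$ instead. The leading monomials are then $X_2X_1$ and $X_2X_j$ with $j\geq3$.
\end{itemize}
In every case the two leading monomials form a combinatorially free set, so Anick's criterion applies.

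\textbf{Logical order.} The paper obtains mildness (Proposition~\ref{prop:mild}) \emph{from} this proposition via \cite[Prop.~1.4]{expo}. Your reversed order is therefore legitimate only because you establish strong freeness directly.

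\textbf{Cup-product formula.} Your general formula should be antisymmetrised:
\[
(\alpha\smallsmile\beta)(r)=-\sum_{i<j}c_{ij}\bigl(\alpha(x_i)\beta(x_j)-\alpha(x_j)\beta(x_i)\bigr).
\]
This is harmless, since you only apply it to basis elements.
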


\begin{rem}\label{rem:few rel}\rm
	The pro-$p$ groups in families $\mathcal{F}_1,\mathcal{F}_2$ satisfy the hypothesis of Proposition~\ref{prop:quadratic few rel}, as we shall see in \S\ref{ssec:G2 cohom} and \S\ref{ssec:G1 cohom}. Also, infinite Demu\v skin groups satisfy  the hypothesis of Proposition~\ref{prop:quadratic few rel} (with a single defining relation).
\end{rem}

Recall from the introduction that a pro-$p$ group $G$ is said to be a Bloch-Kato pro-$p$ group if the $\F_p$-cohomology algebra $\bfH^\bullet(H)$ is a quadratic $\F_p$-algebra for every subgroup $H$ of $G$.
By the Norm Residue Theorem, proved by V.~Voevodsky and M.~Rost (with the contribution of Ch.~Weibel), if $\K$ is a field containing a root of unity of order $p$, then the maximal pro-$p$ Galois group $G_{\K}(p)$ is a Bloch-Kato pro-$p$ group (cf., e.g., \cite[\S~2]{cq:bk}).

The following are well-known to be Bloch-Kato pro-$p$ groups.

\begin{exam}\label{ex:BK}\rm
	\begin{itemize}
		\item[(a)] A free pro-$p$ group is a Bloch-Kato pro-$p$ group (cf., e.g., \cite[Ex.~2.8]{qsv:quadratic}). Indeed, a free pro-$p$ group has cohomological dimension 1, so that its cohomology is trivially a quadratic $\F_p$-algebra, and every subgroup of a free pro-$p$ group is again a free pro-$p$ group (cf., e.g., \cite[Ch.~I,\S~4.2, Cor.~3]{serre:galc}).
		\item[(b)] A Demu\v skin group is a Bloch-Kato pro-$p$ group (cf., e.g., \cite[Cor.~5.7]{qw:cyc}). Indeed, the $\F_p$-cohomology algebra is quadratic by Proposition~\ref{prop:quadratic few rel}, and a closed subgroup of a Demu\v skin group is again a Demu\v skin group, if open, or a free pro-$p$ group, if not open (cf. \cite[Ch.~I, \S~4.5, Exer.~(5)--(6)]{serre:galc}).
	\end{itemize}
\end{exam}


\subsection{Massey products in Galois cohomology}\label{ssec:Massey}
For the formal definition and properties of Massey products in the $\F_p$-cohomology of pro-$p$ groups, we direct the reader to \cite{vogel,mt:Massey,cq:massey} and references therein.
Here we recall the few things needed to prove Proposition~\ref{prop:properties intro}--(2).

Given a pro-$p$ group $G$, consider a sequence $\alpha_1,\ldots,\alpha_n$ of length $n\geq2$ of (non-necessarily distinct) elements of $\rmH^1(G)$.
The associated $n$-fold Massey product, which is a subset of $\rmH^2(G)$, is denoted by $\langle\alpha_1,\ldots,\alpha_n\rangle$.
The $n$-fold Massey product $\langle\alpha_1,\ldots,\alpha_n\rangle$ is said
to be {\sl defined}, if $\langle\alpha_1,\ldots,\alpha_n\rangle\neq\varnothing$, and to {\sl vanish} if $0\in\langle\alpha_1,\ldots,\alpha_n\rangle$.
Finally, $G$ is said to satisfy the {\sl $n$-fold Massey vanishing property} if every $n$-fold Massey product vanishes whenever it is defined.

Massey products enjoy the following properties (cf., e.g.,  \cite[Prop.~1.2.3--1.2.4]{vogel} and \cite[Rem.~2.2]{mt:Massey}).

\begin{lem}\label{lem:masey}
	Let $G$ be a pro-$p$ group, and let $\alpha_1,\ldots,\alpha_n$ a sequence of elements of $\rmH^1(G)$ of length $n$, for $n\geq3$.
	\begin{itemize}
		\item[(a)] If $\alpha_i= 0$ for some $i$, then the $n$-fold Massey product $\langle\alpha_1,\ldots,\alpha_n\rangle$ vanishes.
		\item[(b)] 
		If the $n$-fold Massey product $\langle\alpha_1,\ldots,\alpha_n\rangle$ is defined, then $ \alpha_1\smallsmile\alpha',\alpha_n\smallsmile\alpha''\in\langle\alpha_1,\ldots,\alpha_n\rangle$
		for every $\alpha',\alpha''\in\rmH^1(G)$.
		\item[(c)] If the $n$-fold Massey product $\langle\alpha_1,\ldots,\alpha_n\rangle$ is defined, then
		\begin{equation}\label{eq:cup 0 massey}
			\alpha_1\smallsmile\alpha_2=\alpha_2\smallsmile\alpha_3=\ldots=\alpha_{n-1}\smallsmile\alpha_n=0
		\end{equation}
	\end{itemize}
\end{lem}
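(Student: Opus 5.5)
The plan is to work with the standard description of Massey products via defining systems, in the normalization of \cite{vogel} and \cite{mt:Massey}. A defining system for $\langle\alpha_1,\ldots,\alpha_n\rangle$ is a family of continuous cochains $a_{ij}\in C^1(G,\F_p)$, $1\leq i<j\leq n+1$, $(i,j)\neq(1,n+1)$, with $a_{i,i+1}=\alpha_i$ and $\partial a_{ij}=\sum_{i<k<j}a_{ik}\smallsmile a_{kj}$. Its value is the class of the cocycle $\sum_{1<k<n+1}a_{1k}\smallsmile a_{k,n+1}$. Equivalently, a defining system is a continuous homomorphism $\rho\colon G\to \bar U_{n+1}(\F_p)$ into unipotent upper-triangular matrices modulo the upper-right corner, whose superdiagonal entries are $\alpha_1,\ldots,\alpha_n$. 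Its value is the obstruction to lifting $\rho$ to $U_{n+1}(\F_p)$. I will use both languages interchangeably.

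For (a), suppose $\alpha_i=0$. I will build an explicit defining system by setting $a_{jk}=0$ for every index pair with $j\leq i<k$, i.e.\ every entry whose block ``crosses'' position $i$. The remaining entries split into two independent blocks, indices $\leq i$ and indices $\geq i+1$. Each block is a defining system for a shorter (possibly empty or length-one) Massey product, and the two blocks need not interact. Then every term $a_{1k}\smallsmile a_{k,n+1}$ in the value has a crossing factor, so the value cocycle is identically $0$.

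In matrix terms this is cleaner: $\rho$ is block-diagonal, and lifting it to $U_{n+1}$ with corner entry $0$ is immediate. There is one gap: the blocks themselves must be solvable, which is not automatic. To handle it I will instead take the blocks to be the honest homomorphisms $G\to U_{i}$ and $G\to U_{n-i+1}$ coming from shorter sequences. When those shorter sequences are not defined, I will choose the crossing entries differently so as to reduce to Lemma~\ref{lem:masey}--(c) for the shorter products. I expect this reduction to need an induction on $n$.

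For (b), fix a defining system $\{a_{ij}\}$ with value $\omega$. Replace $a_{2,n+1}$ by $a_{2,n+1}+\alpha'$, which is allowed because $\alpha'$ is a cocycle, and leave everything else unchanged. The value then changes by exactly $\alpha_1\smallsmile\alpha'$. Symmetrically, modifying $a_{1,n}$ changes it by $\alpha''\smallsmile\alpha_n$, and by graded commutativity this is $\pm\,\alpha_n\smallsmile\alpha''$. This shows that $\langle\alpha_1,\ldots,\alpha_n\rangle$ is a union of cosets of $\alpha_1\smallsmile\rmH^1(G)+\rmH^1(G)\smallsmile\alpha_n$.

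The main obstacle is passing from this translation invariance to the containment as the paper states it. That containment amounts to showing $0\in\langle\alpha_1,\ldots,\alpha_n\rangle$, after which the translation argument gives $\alpha_1\smallsmile\alpha',\ \alpha_n\smallsmile\alpha''$ in the product. I do not see how to obtain this in general from definedness alone; it is also in tension with the later use of the Massey vanishing property, which would be automatic if (b) held literally for every defined product. So the first thing I would do is reread \cite[Prop.~1.2.3--1.2.4]{vogel} to pin down the intended formulation. I expect the correct statement to be either the coset statement, or the containment under the extra hypothesis that the product vanishes, in which case the translation argument above proves it completely. I would then prove that version and state it precisely.
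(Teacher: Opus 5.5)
The paper gives no argument for this lemma; it only cites \cite[Prop.~1.2.3--1.2.4]{vogel} and \cite[Rem.~2.2]{mt:Massey}. So your proposal has to be judged on its own.

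\textbf{Part (b).} Your treatment is correct, and your suspicion is justified. Take $\alpha'=0$: the printed (b) would then place $0$ in every defined $n$-fold Massey product, so every pro-$p$ group would have the Massey vanishing property. This fails, for example, for $G=\langle x_1,x_2,x_3\mid[[x_1,x_2],x_3]=1\rangle_{\hat p}$. There all cup products vanish, so $\langle x_1^\ast,x_2^\ast,x_3^\ast\rangle$ is defined. But for any $A_1,A_2,A_3\in\dbU_4$ with $(A_i)_{j,j+1}=\delta_{ij}$, the $(1,4)$-entry of $[[A_1,A_2],A_3]$ equals $1$. Hence no lift to $\dbU_4$ exists, and by Dwyer's criterion (which is an equivalence) the product does not vanish. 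The correct statement is your coset statement: if the product is defined, then $\langle\alpha_1,\ldots,\alpha_n\rangle+\alpha_1\smallsmile\rmH^1(G)+\rmH^1(G)\smallsmile\alpha_n=\langle\alpha_1,\ldots,\alpha_n\rangle$. It follows exactly from your perturbation of $a_{2,n+1}$ and $a_{1,n}$ by cocycles; for $n\geq3$ these entries occur in no defining equation other than the value. This version is also all the paper needs. In the proposition where $\alpha_1(z_1)\neq0$, one has $\alpha_1\smallsmile\rmH^1(G)=\rmH^2(G)$, so any nonempty product is all of $\rmH^2(G)$.

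\textbf{Parts (a) and (c).} Your crossing-entries construction for (a) is the right one, but the ``gap'' you see is not there, and your proposed patch cannot work.
\begin{itemize}
\item Part (a) must carry the hypothesis that the product is defined, which is the only way the paper uses it. Without it the statement is false: for $G=\Z_p^2$ with $\rmH^1$-basis $x^\ast,y^\ast$, one has $\langle 0,x^\ast,y^\ast\rangle=\varnothing$ because $x^\ast\smallsmile y^\ast\neq0$.
\item With that hypothesis nothing needs to be solved. The only entry absent from a defining system is $(1,n+1)$, and it crosses position $i$. So the entries of a given defining system with both indices $\leq i$, or both indices $\geq i+1$, already satisfy the block equations.
\item In matrix terms, the upper-left $i\times i$ corner and the lower-right $(n+1-i)\times(n+1-i)$ corner of $\bar\rho$ are genuine homomorphisms to $\dbU_i$ and $\dbU_{n+1-i}$, since neither corner contains the central entry. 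Their block-diagonal sum is a homomorphism $G\to\dbU_{n+1}$ with superdiagonal $\alpha_1,\ldots,\alpha_n$, the $(i,i+1)$-entry being $\alpha_i=0$. No induction on $n$ is needed.
\item Choosing the crossing entries differently could never rescue an unsolvable block, because crossing entries do not occur in the block equations.
\item You never prove (c). For $n\geq3$, each pair $(i,i+2)$ with $1\leq i\leq n-1$ differs from $(1,n+1)$. So the defining equation $\partial a_{i,i+2}=\alpha_i\smallsmile\alpha_{i+1}$ gives (c) immediately, and it also shows why $n\geq3$ is required.
\end{itemize}
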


If the $n$-fold Massey product $\langle\alpha_1,\ldots,\alpha_n\rangle$ vanishes whenever the associated sequence of elements of $\rmH^1(G)$ of length $n$ satisfies \eqref{eq:cup 0 massey}, then $G$ satisfies the {\sl strong} $n$-fold Massey vanishing property (cf., e.g., \cite[Def.~1.2]{pal:Massey} or \cite[Def.~2.4]{cq:massey}).

\begin{exam}\label{ex:demushkin massey}\rm
	\begin{itemize}
		\item[(a)] If $G$ is a Demu\v skin group, $G$ satisfies the strong $n$-fold Massey vanishing property for every $n\geq3$ (cf., e.g., \cite[Thm.~3.5]{pal:Massey}).
		\item[(b)] If $\K$ is a field containing a root of unity of order $p$, then $G_{\K}(p)$ satisfies the $3$-fold Massey vanishing property (cf. \cite{eli:Massey,EM:Massey,MT:massey3}), and it is conjectured to satisfy the $n$-fold Massey vanishing property for every $n\geq3$ (cf. \cite{mt:conj}).
		It has been proved that this conjecture holds in several relevant cases (cf., e.g., \cite{mt:Massey, HW, palquick:Massey, cq:massey}).
	\end{itemize}
\end{exam}

Now let $n$ be a positive integer.
For $1\leq i,j\leq n$ let $E_{ij}\in\mathrm{Mat}_n(\F_p)$ denote the matrix with every entry equal to 0 but the $(i,j)$-entry, equal to 1, and let $I_n$ denote the identity matrix.
Let
$$\dbU_{n+1}=\left\{\: I_n+\sum_{1\leq i<j\leq n}a_{ij}E_{ij} \mid a_{i,j}\in\F_p\right\}\subseteq\mathrm{GL}_{n+1}(\F_p)$$
be the (multiplicative) $p$-group of upper unitriangular matrices with entries in $\F_p$.
The center of $\dbU_{n+1}$ is $\Zen(\dbU_{n+1})=\{I_n+aE_{1,n}\:\mid\:a\in\F_p\}$, which is isomorphic to $\F_p$.
We put $\bar\dbU_{n+1}=\dbU_{n+1}/\Zen(\dbU_{n+1})$.

Now consider a pro-$p$ group $G$, and let $\rho\colon G\to\dbU_{n+1}$ be a homomorphism of pro-$p$ groups.
For every $i=1,\ldots,n$ the $(i,i+1)$-entry of $\rho$, denoted by $\rho_{i,i+1}$, is a homomorphism $G\to\F_p$, and thus it may be considered as an element of $\rmH^1(G)$.
Analogously, if $n\geq2$ and $\bar\rho\colon G\to\bar\dbU_{n+1}$ is a homomorphism of pro-$p$ groups, then for every $i=1,\ldots,n$ the $(i,i+1)$-entry of $\bar\rho$, denoted by $\bar\rho_{i,i+1}$, is a homomorphism $G\to\F_p$, and thus it may be considered as an element of $\rmH^1(G)$ as well.
The following is the pro-$p$ translation of W.~Dwyer's interpretation (cf. \cite{dwyer}) of Massey products in terms of upper unitriangular representations (see, e.g., \cite[\S~8]{efrat:massey} or \cite[Lemma~9.3]{eq:kummer}).

\begin{prop}\label{prop:massey unip}\rm
	Let $G$ be a pro-$p$ group, and let $\alpha_1,\ldots,\alpha_n$ a sequence of elements of $\rmH^1(G)$ of length $n$, for $n\geq2$.
	\begin{itemize}
		\item[(a)] If there exists a homomorphism of pro-$p$ groups $\bar\rho\colon G\to\bar\dbU_{n+1}$ satisfying $\bar\rho_{i,i+1}=\alpha_i$ for every $i=1,\ldots,n$, then $\langle\alpha_1,\ldots,\alpha_n\rangle$ is defined.
		\item[(b)] If there exists a homomorphism of pro-$p$ groups $\rho\colon G\to\dbU_{n+1}$ satisfying $\rho_{i,i+1}=\alpha_i$ for every $i=1,\ldots,n$, then $\langle\alpha_1,\ldots,\alpha_n\rangle$ vanishes.
	\end{itemize}
\end{prop}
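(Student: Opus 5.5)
This is Dwyer's criterion, so the plan is to argue directly with the cochain-level (defining-system) description of Massey products, using the standard dictionary between maps into $\dbU_{n+1}$ and families of $1$-cochains. Write a map $\bar\rho\colon G\to\bar\dbU_{n+1}$ through its entries: for $1\leq i<j\leq n+1$ with $(i,j)\neq(1,n+1)$, let $a_{ij}\colon G\to\F_p$ be the continuous function $g\mapsto\bar\rho(g)_{ij}$. These are well defined because the quotient by $\Zen(\dbU_{n+1})$ forgets only the $(1,n+1)$-entry. The multiplication rule for unitriangular matrices gives, for every entry not equal to $(1,n+1)$,
\[
a_{ij}(gh)=a_{ij}(g)+a_{ij}(h)+\sum_{i<l<j}a_{il}(g)\,a_{lj}(h).
\]
In inhomogeneous cochain notation this reads $\partial a_{ij}=-\sum_{i<l<j}a_{il}\smallsmile a_{lj}$, with the sign depending on the convention fixed in \cite{vogel,efrat:massey}. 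It follows that the family $\{a_{ij}\}$, possibly after replacing some $a_{ij}$ by $-a_{ij}$ according to $j-i$, is exactly a defining system for $\langle\alpha_1,\ldots,\alpha_n\rangle$ in the sense of \cite{dwyer}. Here the superdiagonal entries $a_{i,i+1}=\bar\rho_{i,i+1}=\alpha_i$ are homomorphisms, i.e.\ $1$-cocycles.

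For (a), it is enough to recall that $\langle\alpha_1,\ldots,\alpha_n\rangle$ is nonempty exactly when a defining system exists. The system just constructed is one, so the Massey product is defined. Concretely, the element it represents is the class of the $2$-cocycle $\sum_{1<l<n+1}a_{1l}\smallsmile a_{l,n+1}$. This is the obstruction to lifting $\bar\rho$ to $\dbU_{n+1}$, i.e.\ the pullback along $\bar\rho$ of the class of the central extension $\Zen(\dbU_{n+1})\to\dbU_{n+1}\to\bar\dbU_{n+1}$.

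For (b), a homomorphism $\rho\colon G\to\dbU_{n+1}$ also supplies the corner function $a_{1,n+1}$. The same multiplication rule then gives $\partial a_{1,n+1}=\pm\sum_{l}a_{1l}\smallsmile a_{l,n+1}$. So the value of the defining system coming from the composite $G\to\dbU_{n+1}\to\bar\dbU_{n+1}$ is a coboundary, hence $0\in\langle\alpha_1,\ldots,\alpha_n\rangle$.

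I expect no step to be genuinely hard; the only real obstacle is bookkeeping the signs between the matrix multiplication rule and the chosen sign convention for defining systems. I would handle it by passing to $a'_{ij}=(-1)^{j-i+1}a_{ij}$, or simply by citing \cite[Lemma~9.3]{eq:kummer} and \cite[\S~8]{efrat:massey}. Since $\F_p$ is a trivial module and every cochain involved is continuous, the translation to the profinite setting is immediate.
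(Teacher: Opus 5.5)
Your proposal is correct and is exactly Dwyer's argument, which the paper does not reprove but only cites (Dwyer; Efrat, \S 8; Efrat--Quadrelli, Lemma 9.3): the non-corner entries of $\bar\rho$ form a defining system, and the corner entry of a lift $\rho$ exhibits the value of that system as a coboundary. Your sign normalization also works: $a'_{ij}=(-1)^{j-i+1}a_{ij}$ turns $\partial a_{ij}=-\sum_{l}a_{il}\smallsmile a_{lj}$ into $\partial a'_{ij}=\sum_{l}a'_{il}\smallsmile a'_{lj}$ while keeping $a'_{i,i+1}=\alpha_i$.
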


We will make use of the following well-known property of upper unitriangular matrices.

\begin{fact}\label{fact:matrix}
	Let $A=(a_{ij}),B=(b_{ij})\in\dbU_n$ such that $a_{ij}=0$ for if $0<j-i< m_1$, and $b_{ij}=0$ if  $0<j-i< m_2$, for some $m_1,m_2\geq1$, and set $C=(c_{ij}):=[A,B]$.
	Then $c_{ij}=0$ if $j-i<m_1+m_2$.
\end{fact}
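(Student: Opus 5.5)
The plan is to pass from the group $\dbU_n$ to the associative algebra of upper triangular matrices, where the condition on the entries becomes a filtration that is multiplicative. For $m\geq0$ let $L_m\subseteq\mathrm{Mat}_n(\F_p)$ be the $\F_p$-subspace of matrices $(x_{ij})$ with $x_{ij}=0$ whenever $j-i<m$. Then $L_0$ is the algebra of upper triangular matrices, $L_1$ consists of the strictly upper triangular ones, and $\dbU_n=I_n+L_1$.

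The first step is the multiplicativity $L_a\cdot L_b\subseteq L_{a+b}$. If $X\in L_a$ and $Y\in L_b$, then $(XY)_{ij}=\sum_k x_{ik}y_{kj}$. A term is nonzero only if $k-i\geq a$ and $j-k\geq b$, which forces $j-i\geq a+b$. In particular $L_0L_m\subseteq L_m$, so each $L_m$ is a two-sided ideal of $L_0$.

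The second step is to write $A=I_n+N$ and $B=I_n+M$. The hypothesis says exactly that $N\in L_{m_1}$ and $M\in L_{m_2}$. Then
\[
AB-BA=NM-MN\in L_{m_1+m_2}
\]
by the first step. Now expand the commutator with the paper's convention $[A,B]=A^{-1}B^{-1}AB$:
\[
[A,B]=A^{-1}B^{-1}\bigl(BA+(AB-BA)\bigr)=I_n+A^{-1}B^{-1}(AB-BA).
\]
Since $A^{-1}B^{-1}\in\dbU_n\subseteq L_0$ and $L_{m_1+m_2}$ is an ideal of $L_0$, we get $[A,B]-I_n\in L_{m_1+m_2}$. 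This says $c_{ij}=0$ whenever $0<j-i<m_1+m_2$. For $j-i\leq 0$ the entries are those of a unitriangular matrix, so the statement is to be read for the off-diagonal entries, as in the rest of the paper.

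No step is a real obstacle. The only point needing care is the rewriting $AB=BA+(AB-BA)$, which avoids expanding $A^{-1}$ and $B^{-1}$ as power series, together with the observation that the inverses lie in $L_0$.
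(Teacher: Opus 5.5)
Your proof is correct and complete. The paper gives no proof of this statement; it is quoted as a well-known property of unitriangular matrices. Your argument is the standard justification: the filtration satisfies $L_aL_b\subseteq L_{a+b}$, so each $L_m$ is an ideal of the upper triangular matrices, and then $[A,B]=I_n+A^{-1}B^{-1}(NM-MN)$.

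You were also right to read the conclusion as $0<j-i<m_1+m_2$, since the diagonal entries of $C$ are $1$. This is the reading the paper actually uses, for instance to see that $[D,\tilde C]$ is central modulo $\Zen(\dbU_5)$.
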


\subsection{Graded group algebras and associated Lie algebras}\label{sec:gradedAlgebras}
For an introduction on restricted Lie algebras, we refer the reader to the original paper of Jacobson \cite{jacobson} on this topic. If $\mathfrak g$ is a restricted Lie $\F_p$-algebra with $p$-operation $x\mapsto x^{[p]}$, then its restricted universal envelope, denoted $u(\mathfrak g)$, is the quotient of the associative algebra $U(\mathfrak g):=T^\bullet(\mathfrak g)/(X\otimes Y-Y\otimes X-[X,Y]:X,Y\in \mathfrak g)$ by the ideal generated by the elements $X^{[p]}-X^{\otimes p}$, for $X$ belonging to the image of $\mathfrak g$ into $U(\mathfrak g)$. 

Given a pro-$p$ group $G$, denote by $\FppG$ the complete group algebra $\invlim_{U\lhd_o G}\F_p[G/U]$, and let $I_G$ be its augmentation ideal. The graded group algebra is the graded algebra associated with the $I_G$-adic filtration of $\FppG$, that is, \[\grFp G:=\coprod_{n\geq 0}I_G^n/I_G^{n+1}.\]
By a well known result of S.A.~Jennings, the subgroups $G_{n}:=\set{g\in G}{g-1\in I_G^n}$ coincide with the subgroups of the $p$-Zassenhaus filtration of $G$. It follows that the graded group algebra $\grFp G$ is the restricted universal envelope of the restricted Lie algebra \[\gr G:=\coprod_{n\geq 1}G_n/G_{n+1}\]
(cf., e.g., \cite[Ch.~12]{ddsms}, see also \cite[\S~3]{expo}).

Usually, describing the restricted Lie algebra $\gr G$ associated to a pro-$p$ group $G$, and its restricted universal envelope $\grFp G$, is a challenging task.
By Labute's work \cite{labute:mild}, if $G$ is mild, then it admits a presentation --- where the relations satisfy a condition of being independent in a maximal possible way --- from which a presentation
of the associated restricted Lie algebra may be deduced directly, cf. \cite[Thm.~1.2--(a)]{labute:mild}.
For instance, infinite Demu\v skin pro-$p$ groups are mild. Moreover, Proposition~\ref{prop:quadratic few rel} and \cite[Prop.~1.4]{expo} imply the following.

\begin{prop}\label{prop:mild}
    Let $G$ be a pro-$p$ group in one of the families $\mathcal{F}_1,\mathcal{F}_2$. Then $G$ is mild.
\end{prop}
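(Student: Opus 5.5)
The plan is to derive mildness from the cohomological criterion in \cite[Prop.~1.4]{expo}. That result says, roughly, that a finitely presented pro-$p$ group $G$ is mild (with respect to the $p$-Zassenhaus filtration) provided two conditions hold. First, its defining relations lie in $\Phi(G)$ with nontrivial degree-$2$ initial forms. Second, the quadratic algebra $\bfH^\bullet(G)$ is Koszul with $\cd(G)=2$; equivalently, the quadratic dual algebra $A^!$ has Hilbert series $(1-dt+rt^2)^{-1}$, where $d=\dim\rmH^1(G)$ and $r=\dim\rmH^2(G)$. The proof therefore reduces to checking that every $G\in\mathcal{F}_1\cup\mathcal{F}_2$ satisfies the hypotheses of Proposition~\ref{prop:quadratic few rel}. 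Part (b) of that proposition then gives $\cd(G)=2$ and universal Koszulity of $\bfH^\bullet(G)$, and \cite[Prop.~1.4]{expo} yields mildness.

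\textbf{Family $\mathcal{F}_1$.} Order the generators so that $x_2,y_2$ come first. This is legitimate because $d\geq2$. Since $[x_1^q,y_1]\equiv[x_1,y_1]^q$ modulo $[G',G]$, we have $[x_1^q,y_1]\in G^p[G',G]$ when $p$ is odd. When $p=2$ we have $q\geq4$, so $[x_1^q,y_1]\in G^4(G')^2[G',G]$. Thus the single relation has the form $[x_2,y_2]\cdots[x_d,y_d]\cdot r_1'$ with $a_{12}=1$ and $r_1'$ in the required subgroup. By Fact~\ref{fact}, the $x_i,y_i$ form a minimal generating set, because the relation lies in $\Phi$ of the free group. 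Hence $\dim\rmH^1(G)=2d$ and $\dim\rmH^2(G)=1$.

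\textbf{Family $\mathcal{F}_2$.} The first relation is $x_1^q[x_1,y_1]\cdots$, where $x_1^q\in G^p$ (respectively $G^4$ when $p=2$, since then $k\geq2$). The second relation is $[z_1,z_2]$. Choose an ordering with a pair $\{x_i,y_i\}$ disjoint from $\{z_1,z_2\}$ placed first, unless no such pair exists. If $d=2$, the pair $\{z_1,z_2\}$ meets both pairs, so instead choose the first pair to be one of $\{x_i,y_i\}$. We then need $b_{12}=0$. This holds because $\{z_1,z_2\}\neq\{x_i,y_i\}$, so $[z_1,z_2]$ does not involve the commutator of the first pair. We also need $b_{hk}\neq0$ for $(h,k)$ the positions of $z_1,z_2$, which is immediate. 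Minimality of the generators again follows from Fact~\ref{fact}. For $\dim\rmH^2(G)=2$, observe that the two relations have linearly independent degree-$2$ initial forms in $\gr$ of the free group. This independence is exactly what makes the $\mathcal{B}$ of Proposition~\ref{prop:quadratic few rel} a basis.

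The main obstacle is matching the precise hypotheses of \cite[Prop.~1.4]{expo}. In particular, one must check that universal Koszulity together with $\cd=2$ gives the Hilbert series identity $h_{A^!}(t)\,(1-2dt+rt^2)=1$. This holds by the Koszul duality relation $h_A(-t)h_{A^!}(t)=1$, using $h_A(t)=1+2dt+rt^2$. It also requires the relations to be chosen as in Labute's setting, namely with initial forms in degree $2$ of the Zassenhaus filtration; this holds since all the extra terms lie in degree $\geq3$. I expect this bookkeeping, especially for $p=2$, to be the only delicate point.
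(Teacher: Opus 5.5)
Your proposal is correct and follows the paper's route. The paper deduces mildness directly from Proposition~\ref{prop:quadratic few rel}, which gives universal Koszulity of $\bfH^\bullet(G)$ and $\cd(G)=2$, combined with \cite[Prop.~1.4]{expo}, using the ordering $x_2,y_2,x_1,y_1,\ldots$ for $\mathcal{F}_1$ and simply $x_1,y_1,\ldots,y_d$ for $\mathcal{F}_2$. Your case distinction for the ordering in $\mathcal{F}_2$ is unnecessary, since $\{z_1,z_2\}\neq\{x_i,y_i\}$ for every $i$ already gives $b_{12}=0$ whichever pair is placed first. Also note your notation clash between $d=\dim\rmH^1(G)$ and $2d$ in the Hilbert series.
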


A positively graded restricted Lie $\F_p$-algebra $\mathfrak g$ is said to be {\sl quadratic} (resp. {\sl Koszul}) if so is its restricted envelope $u(\mathfrak g)$ with respect to the induced grading, that is, \[\ext^{i,j}_{u(\mathfrak g)}(\F_p,\F_p)=0\quad \text{for $i,j\in \{1,2\}$, $i\neq j$ (resp. $\forall i\neq j$)}.\]
We say that $\mathfrak g$ is \textit{Bloch-Kato} (resp. \textit{weakly Bloch-Kato}) if all of its restricted subalgebras generated in degree one are Koszul (resp. quadratic).
If $p$ is an odd prime, then \cite[Cor. 2.3]{simone:koszul} shows that weakly Bloch-Kato restricted Lie algebras of characteristic $p$ are automatically Bloch-Kato.

Moreover, one has the following (cf. \cite[Thm. A]{simone:kurosh})
\begin{thm}
	Let $\mathfrak g$ be a positively graded restricted Lie algebra generated in degree one. Then $\mathfrak g$ is Bloch-Kato if and only if the $\F_p$-cohomology algebra $\bfH^\bullet(\mathfrak g,\F_p)$ is universally Koszul.
\end{thm}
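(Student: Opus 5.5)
The statement concerns a Lie algebra $\mathfrak g$ generated in degree one, and the plan is to pass through Koszul duality between $A:=\bfH^\bullet(\mathfrak g,\F_p)=\ext^\bullet_{u(\mathfrak g)}(\F_p,\F_p)$ and $u(\mathfrak g)$. The dictionary is: restricted subalgebras $\mathfrak h\subseteq\mathfrak g$ generated in degree one correspond to subspaces $W\subseteq\mathfrak g_1$, and these correspond to subspaces $V=W^\perp\subseteq A_1=\rmH^1(\mathfrak g,\F_p)=\mathfrak g_1^\ast$. Under this correspondence I will match the cyclic $A$-module $A/AV$ with the $u(\mathfrak g)$-module $u(\mathfrak g)\otimes_{u(\mathfrak h)}\F_p=u(\mathfrak g)/u(\mathfrak g)W$. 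The sanity check is the two extreme cases. For $V=A_1$ one gets $W=0$, and $A/AA_1=\F_p$ corresponds to the free module $u(\mathfrak g)$. For $V=0$ one gets $\mathfrak h=\mathfrak g$, and $A$ corresponds to $\F_p$.

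\textbf{Step 1.} Both conditions imply that $\mathfrak g$ is Koszul, i.e.\ that $u(\mathfrak g)$ and $A$ are Koszul and quadratic dual to each other.
\begin{itemize}
\item If $\mathfrak g$ is Bloch-Kato, take $\mathfrak h=\mathfrak g$; then $u(\mathfrak g)$ is Koszul, so $A=\ext_{u(\mathfrak g)}(\F_p,\F_p)$ is its Koszul dual.
\item If $A$ is universally Koszul, take $V=A_1$; the trivial module then has a linear resolution, so $A$ is Koszul. Since $\mathfrak g$ is generated in degree one, $u(\mathfrak g)=A^!$, and $u(\mathfrak g)$ is Koszul too.
\end{itemize}
In this situation the Koszul duality functor of Beilinson--Ginzburg--Soergel and Positselski--Polishchuk applies. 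It gives an equivalence between Koszul (linear) $A$-modules and Koszul $u(\mathfrak g)$-modules, and it sends the quadratic module $A/AV$ to the quadratic dual module $u(\mathfrak g)/u(\mathfrak g)V^\perp$. Hence $A/AV$ has a linear resolution if and only if $M_W:=u(\mathfrak g)/u(\mathfrak g)W$ is a Koszul $u(\mathfrak g)$-module, that is, if and only if $\ext^{i,j}_{u(\mathfrak g)}(M_W,\F_p)=0$ for $i\neq j$.

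\textbf{Step 2.} By the PBW theorem for restricted enveloping algebras, $u(\mathfrak g)$ is a free right (and left) $u(\mathfrak h)$-module, with a graded basis. Therefore
\[
M_W\cong u(\mathfrak g)\otimes_{u(\mathfrak h)}\F_p .
\]
The Eckmann--Shapiro lemma then gives an isomorphism of bigraded spaces
\[
\ext^{\bullet,\bullet}_{u(\mathfrak g)}(M_W,\F_p)\cong\ext^{\bullet,\bullet}_{u(\mathfrak h)}(\F_p,\F_p).
\]
So $M_W$ is Koszul if and only if $u(\mathfrak h)$ is Koszul. Combining this with Step 1 and letting $W$ range over all subspaces of $\mathfrak g_1$ (equivalently, $V$ over all subspaces of $A_1$) yields both implications.

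\textbf{Main obstacle.} The delicate part is the module-level duality in Step 1. I must check that the quadratic dual of the cyclic module $A/AV$ really is $u(\mathfrak g)/u(\mathfrak g)W$ with $W=V^\perp$, and in particular that the orthogonality and sign conventions, which depend on graded-commutativity, match up. I also need the equivalence "$A/AV$ Koszul $\Leftrightarrow$ dual module Koszul" without assuming beforehand that $u(\mathfrak h)$ is quadratic. I would first try to prove this directly by comparing the Koszul complexes $A^{!\ast}\otimes A/AV$ and $u(\mathfrak g)\otimes(\cdots)$. Their exactness should be exchanged by the standard spectral sequence argument for Koszul pairs, and the remaining grading bookkeeping is routine.
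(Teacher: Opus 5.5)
The paper contains no proof of this statement; it is quoted from \cite[Thm.~A]{simone:kurosh}, so your argument can only be judged on its own merits. On those merits it is correct and complete. The chain
\[
A/AV \text{ Koszul}\iff u(\mathfrak g)/u(\mathfrak g)V^\perp \text{ Koszul}\iff u(\mathfrak h)\text{ Koszul}
\]
is exactly what is needed.

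\textbf{Step 2.} The Eckmann--Shapiro step is sound. Restricted PBW makes $u(\mathfrak g)$ free over $u(\mathfrak h)$ on a homogeneous basis, so the bigrading is preserved. Also $u(\mathfrak g)W=u(\mathfrak g)\,u(\mathfrak h)_+$, because $u(\mathfrak h)$ is generated as an algebra by $W$, and this justifies $M_W\cong u(\mathfrak g)\otimes_{u(\mathfrak h)}\F_p$.

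\textbf{The obstacle you flag.} It is a standard theorem (Polishchuk--Positselski, \emph{Quadratic Algebras}, Ch.~2). Over a Koszul algebra $A$, quadratic duality $M\mapsto M^!$ is an anti-equivalence (contravariant) between Koszul $A$-modules and Koszul $A^!$-modules. For the cyclic module $A/AV$, which is generated in degree $0$ with relations $V$ in degree $1$, the dual is $A^!/A^!V^\perp$. Both cyclic modules are quadratic by construction, so you never need to know in advance that $u(\mathfrak h)$ is quadratic. The left/right and sign issues are harmless, since $u(\mathfrak g)$ has an antipode and $A$ is graded-commutative.

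\textbf{Repair 1 (Step 1).} Your sentence ``since $\mathfrak g$ is generated in degree one, $u(\mathfrak g)=A^!$'' runs the logic backwards, since generation in degree one alone does not make $u(\mathfrak g)$ quadratic. The correct chain is:
\begin{enumerate}
\item $A$ Koszul implies $A$ is generated by $A_1=\ext^{1}_{u(\mathfrak g)}(\F_p,\F_p)$.
\item This group is concentrated in internal degree $1$ precisely because $\mathfrak g$ is generated in degree one; this is where that hypothesis enters.
\item Hence $\ext^{i,j}_{u(\mathfrak g)}(\F_p,\F_p)=0$ for $i\neq j$.
\item So $u(\mathfrak g)$ is Koszul, hence quadratic, with $u(\mathfrak g)\cong A^!$.
\end{enumerate}

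\textbf{Repair 2 (finite dimensionality).} You should state explicitly that $\dim\mathfrak g_1<\infty$, which is the standing situation here but not written in the statement. It is needed for $A^{!!}=A$ and the module anti-equivalence. It is also needed so that every subspace $V\subseteq A_1=\mathfrak g_1^\ast$ satisfies $(V^\perp)^\perp=V$, and hence arises from a subspace of $\mathfrak g_1$. You need this in the direction Bloch--Kato $\Rightarrow$ universally Koszul, where $V$ is arbitrary.
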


In particular, the Lie theoretic version of Mina\v c's universal Koszulity conjecture holds true.

As an example, if $p$ is an odd prime and $G$ is a Demu\v skin group as in (\ref{eq:pres demushkin}), then the associated restricted Lie algebra admits presentation (see \cite{labute:onerel}) \[\gr G=\pres{x_1,y_1,\dots,x_d,y_d}{\sum_{i=1}^d[x_i,y_i]}\]
and is Bloch-Kato.

Indeed, all the proper subalgebras generated in degree one are free restricted Lie algebras (see \cite[\S 1.4]{simone:kurosh} for the ordinary case and \cite[\S 1.3, \S 4.1]{simone:koszul} for restricted Lie algebras). We call $\gr G$ the Demu\v skin Lie algebra of rank $2d$; if $d=1$, then the Lie algebra is a free-abelian restricted Lie algebra.

These Lie algebras play a special role in the theory of Bloch-Kato Lie algebras, because any Bloch-Kato restricted Lie algebra that is neither free nor abelian contains a Demu\v skin Lie algebra of positive rank as a subalgebra generated in degree one (\cite[\S 4]{simone:koszul}).
The group theoretic counterpart of this phenomenon is only conjectural, and part of I.~Efrat's Elementary Type Conjecture on maximal pro-$p$ Galois groups (see, e.g., \cite{etc_new}). 

Quadratic restricted Lie algebras with a single relation are free products of a free restricted Lie algebra and a Demu\v skin Lie algebra, and hence they are all Bloch-Kato. Actually, the class of quadratic Lie algebras which are automatically Bloch-Kato is much bigger, as the following result shows. 
Most Lie algebras considered in the present paper are instances of this class.

\begin{thm}\label{thm:2rel}
	Let $\mathfrak g$ be a quadratic restricted Lie algebra with at most two minimal relations, that is $\dim_{\,\F_p}\ext^2_{u(\mathfrak g)}(\F_p,\F_p)\leq 2$. 
	Then, $\mathfrak g$ is Bloch-Kato.
\end{thm}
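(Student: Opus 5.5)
Since $\mathfrak g$ is quadratic with at most two relations, and since the Bloch-Kato property is tested on the subalgebras generated in degree one, the plan is to reduce to the classification of such subalgebras and use the dichotomy free / Demu\v skin / two-relator. First, for $p$ odd, \cite[Cor.~2.3]{simone:koszul} shows that weakly Bloch-Kato implies Bloch-Kato. So for odd $p$ it suffices to prove that every restricted subalgebra $\mathfrak h\subseteq\mathfrak g$ generated in degree one is quadratic. For $p=2$ we argue the same way, but we must check Koszulity directly. The observation that makes this feasible is that an algebra with at most two quadratic relations satisfying the hypotheses has a universally Koszul cohomology. This is the Lie analogue of Proposition~\ref{prop:quadratic few rel}, via \cite{cq:2relUK}. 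With that in hand, \cite[Thm.~A]{simone:kurosh} converts universal Koszulity of $\bfH^\bullet(\mathfrak g,\F_p)$ into the Bloch-Kato property of $\mathfrak g$.

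So the main line is this. Let $V=\mathfrak g_1$, let $R\subseteq \Lambda^2V\oplus V^{(1)}$ (the restricted degree-2 part) be the relation space with $\dim R\le2$, and put $A=\bfH^\bullet(\mathfrak g,\F_p)$. Quadraticity of $\mathfrak g$ together with \cite{simone:koszul} identifies $A_{\le2}$ with the quadratic dual $u(\mathfrak g)^!$. Thus $A^!$ is generated by $V^\ast$, and $A_2=R^\ast$ has dimension at most $2$.

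The universal Koszulity of a quadratic algebra whose degree-two part has dimension at most two is then checked case by case, following the method of \cite{cq:2relUK}:
\begin{itemize}
\item[(i)] $\dim A_2=0$: $A$ is trivial and $\mathfrak g$ is free.
\item[(ii)] $\dim A_2=1$: $A$ is the cohomology of a one-relator algebra. Here the known result applies: such an algebra is a free product of a free algebra and a Demu\v skin algebra, hence Bloch-Kato.
\item[(iii)] $\dim A_2=2$: choose bases as in Proposition~\ref{prop:quadratic few rel}(a), so that the multiplication $V^\ast\times V^\ast\to A_2$ has the normal form described there.
\end{itemize}
In case (iii) we must show that, for every subspace $W\subseteq A_1$, the ideal $A\cdot W$ has a linear resolution. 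Since $A_3=0$ (by a dimension/duality argument on $R$, as $\cd=2$ in the group case), this amounts to checking two things: that the annihilator $\{a\in A_1: aW=0\}$ generates the relevant kernel, and that $A_1W$ equals $A_2$ or a subspace of it, so that the colon ideals are generated in degree one. This is a finite linear-algebra verification on bilinear maps $V^\ast\times V^\ast\to\F_p^2$.

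The main obstacle is case (iii). There, the pencil of two alternating (or, for $p=2$, restricted) forms can degenerate: for example, a common isotropic subspace can make $A_1W\subsetneq A_2$ with nonlinear syzygies. Handling this requires the normal form $a_{12}\ne0$, $b_{12}=0$, $b_{hk}\ne0$ guaranteed by quadraticity. I would first try reducing to the pair $(V^\ast,W)$ with $W$ one-dimensional, where the colon ideal $(0:w)$ is computed explicitly from the two forms. Then I would induct on $\dim W$ using the short exact sequences $0\to A/(I:w)(-1)\to A/I\to A/(I+wA)\to0$, which is the standard criterion for universal Koszulity.
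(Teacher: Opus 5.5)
Your overall route matches what the paper indicates (the result is quoted from \cite[Thm.~4.3]{simone:koszul}, built on \cite{cq:2relUK}): prove that the cohomology is universally Koszul, then conclude with \cite[Thm.~A]{simone:kurosh}. As written, however, the argument has a genuine gap, because it is circular. Quadraticity only tells you $A=\bfH^\bullet(\mathfrak g,\F_p)$ in degrees $\leq 2$. You then use $A_3=0$ and treat $A$ as the quadratic algebra determined by $R$, justified by ``$\cd=2$ in the group case''. There is no group here, and quadraticity of $u(\mathfrak g)$ gives no control of $\ext^{3,j}_{u(\mathfrak g)}(\F_p,\F_p)$ for $j>3$. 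The equality $A=u(\mathfrak g)^!$ in all degrees is exactly the Koszulity of $u(\mathfrak g)$, which is part of what you must prove.

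The repair is to work with $B=u(\mathfrak g)^!=\Lambda(V^\ast)/(R^\perp)$; for $p$ odd one has $R\subseteq\Lambda^2V$. Then $B_3^\ast\cong\{\xi\in\Lambda^3V:\ \iota_f\xi\in R\text{ for all }f\in V^\ast\}$. The contractions of a nonzero trivector span a space of dimension at least $3>\dim R$, so $B_3=0$. If you show $B$ is universally Koszul, then $B$ is Koszul, hence so is $u(\mathfrak g)=B^!$. Only then does $\bfH^\bullet(\mathfrak g,\F_p)\cong B$ hold, and only then can Theorem~A be applied.

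Second, the decisive step (iii) is left as a plan, and the obstruction you anticipate (degenerate pencils, nonlinear syzygies) does not occur. Take $I=BW$ and $w\in B_1\smallsetminus W$. Since $B_3=0$, the colon ideal is $(I:w)=C\oplus B_2$ with $C=\{a\in B_1: aw\in B_1W\}\supseteq W+\F_pw$. By Conca's colon-ideal criterion you only need $B_1C=B_2$, which follows case by case.
If $B_1W=B_2$, then $C=B_1$.
If $\dim B_1W=1$, either $B_1w\subseteq B_1W$ and $C=B_1$, or $B_1W+B_1w=B_2$.
If $B_1W=0$, then $C=\ker(a\mapsto aw)$. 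Either $C=B_1$; or $C$ contains $w$ with $B_1w=B_2$; or $C$ is a hyperplane, $B_1=C\oplus\F_pv$, and $B_2=B_1B_1=CC+Cv\subseteq B_1C$ because $v^2=0$.
No normal form for the pencil is needed. That normal form is in any case just a choice of basis of $R$, not a consequence of quadraticity.

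Finally, your treatment of $p=2$ (``argue the same way'') does not work. There $R$ may contain restricted squares $X^{[2]}$, so $B$ is a quotient of the symmetric algebra $S(V^\ast)$ and possibly $v^2\neq0$. For example, $\mathfrak g=\langle X\mid X^{[2]}\rangle$ has $u(\mathfrak g)=\F_2[X]/(X^2)$ and $B=\F_2[x]$. So both $B_3=0$ and the hyperplane step fail. Moreover, Proposition~\ref{prop:quadratic few rel} excludes square terms when $p=2$, so its normal form does not cover this case either. The case $p=2$ needs a separate argument.
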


A proof of this result can be found in \cite[Thm. 4.3]{simone:koszul}, where the first named author applies techniques developed by the second named author in \cite{cq:2relUK}.

The cohomology of a pro-$p$ group can be approximated via a spectral sequence discovered by J.P. May \cite{may:SS} involving the restricted cohomology of the associated graded restricted Lie algebra, that is, 
\[E_1^{i,j}=\ext^{i+j,j}_{u(\gr G)}(\F_p,\F_p)\implies \rmH^{i+j}(G,\F_p) \]

\begin{prop}
	Let $G$ be a pro-$p$ group whose associated restricted Lie algebra is Koszul. Then, the natural map $\hom(G,\F_p)\to \hom_{\F_p}(G/\Phi(G),\F_p)\simeq \hom_{\F_p}(\gr_1 G,\F_p)$ induces an isomorphism $\bfH^\bullet(G,\F_p)\to \ext^\bullet_{u(\gr G)}(\F_p,\F_p)$. In particular, $G$ is an $\rmH$-Koszul pro-$p$ group.
\end{prop}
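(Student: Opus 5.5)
The plan is to use the May spectral sequence
\[
E_1^{i,j}=\ext^{i+j,j}_{u(\gr G)}(\F_p,\F_p)\implies \rmH^{i+j}(G,\F_p)
\]
and show that, under the Koszul hypothesis, it collapses at $E_1$. In that case the edge map is an isomorphism of algebras, and it is the map induced by degree one.

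First I would identify the map in degree one. By Jennings' theorem $G_2=\Phi(G)$, so $\gr_1G=G/\Phi(G)$. Hence
\[
\rmH^1(G)=\hom(G,\F_p)=\hom_{\F_p}(\gr_1 G,\F_p)=\ext^{1,1}_{u(\gr G)}(\F_p,\F_p),
\]
and this is a canonical identification. Koszulity of $u(\gr G)$ means that $\ext^{i,j}_{u(\gr G)}(\F_p,\F_p)=0$ for $i\neq j$. In the indexing $E_1^{s,t}=\ext^{s+t,t}$ this says $E_1^{s,t}=0$ unless $s=0$, so the $E_1$-page is concentrated on the single line $s=0$ (the diagonal $\ext^{n,n}$).

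Next I would argue collapse. The differentials $d_r$ have bidegree $(r,1-r)$ in May's grading, so they change $s$. A page concentrated on one line $s=0$ therefore admits only zero differentials, and $E_1=E_\infty$. Since there is only one nonzero filtration piece in each total degree, the abutment filtration on $\rmH^n(G)$ has a single graded piece. This gives an isomorphism $\rmH^n(G)\cong \ext^{n,n}_{u(\gr G)}(\F_p,\F_p)$ for every $n$. The spectral sequence is multiplicative, because May's construction comes from the filtered cobar complex of $\FppG$ with the $I_G$-adic filtration. Consequently the isomorphism respects products: cup product on the left and Yoneda product on the right. The diagonal part $\bigoplus_n\ext^{n,n}$ of a Koszul algebra is generated by $\ext^{1,1}$, so the algebra map is determined by its degree-one component, which is the map identified above. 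Finally, $\bfH^\bullet(G)$ is the Koszul dual of the Koszul algebra $u(\gr G)=\grFp G$, hence itself Koszul, and it is quadratic dual to $\grFp G$. This is precisely the definition of $G$ being $\rmH$-Koszul.

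The main obstacle I anticipate is the convergence and indexing bookkeeping for May's spectral sequence in the profinite setting. One has to check that the filtration on continuous cochains is complete and exhaustive, so that $E_\infty$ really computes $\gr\rmH^\bullet(G)$, and that the grading conventions place the Koszul diagonal on one line rather than on a line on which the differentials could act. I would handle this by reducing to finite quotients $G/G_n$ and passing to the limit, or simply cite May's convergence statement for complete filtered augmented algebras applied to $\FppG$.
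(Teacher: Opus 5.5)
Your proposal is correct and follows the same route as the paper: Koszulity puts the $E_1$-page of May's spectral sequence on the single line $i=0$, so every $d_r$ with $r\geq1$ has zero source or target and $E_1=E_\infty$, and then Koszul duality (the paper cites Positselski--Polishchuk, Thm.~7.1) gives that $\bfH^\bullet(G)$ is Koszul and isomorphic to $\ext^\bullet_{u(\gr G)}(\F_p,\F_p)$. Your extra points --- multiplicativity, there being a single filtration piece in each total degree, and the convergence of the spectral sequence --- spell out steps that the paper's proof leaves implicit.
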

\begin{proof}
    The differential of the $r$th page of May's spectral sequence is \[d_r^{i,j}:\ext_{u(\gr G)}^{i+j,j}(\F_p,\F_p)\to \ext_{u(\gr G)}^{i+j+1,j-r+1}(\F_p,\F_p).\] Since $\gr G$ is Koszul, for all $r\geq 1$, either the source or the target of $d_r$ are zero, and hence $d_r=0$, that is, the spectral sequence collapses at the first page $E_1=E_\infty$. Finally, Theorem 7.1 of \cite{pp:quadrAlg} implies that $\bfH^\bullet(G)$ is Koszul, and hence isomorphic to $\ext_{u(\mathfrak g)}^\bullet(\F_p,\F_p)$.
\end{proof}




\section{1-cyclotomic oriented pro-$p$ groups}\label{sec:1cyc}

\subsection{Orientations of pro-$p$ groups}\label{ssec:or}

Let
$$1+p\Z_p=\{\:1+p\lambda\:\mid\:\lambda\in\Z_p\:\}$$
denote the multiplicative group of principal units of the ring of integers $\Z_p$.
Given a pro-$p$ group $G$, a homomorphism of pro-$p$ groups $\theta\colon G\to1+p\Z_p$ is called an orientation of $G$.
Analogously, a pair $(G,\theta)$, with $\theta\colon G\to1+p\Z_p$ an orientation, is called an oriented pro-$p$ group.

An oriented pro-$p$ group $(G,\theta)$ gives rise to the continuous $G$-module $\Z_p(\theta)$, which is isomorphic to $\Z_p$ as an abelian pro-$p$ group, and with $G$-action
\[
x.\lambda=\theta(x)\cdot\lambda\qquad\text{for every }x\in G\text{ and }\lambda\in\Z_p(\theta).
\]
Since the action of $G$ on the quotient $G$-module $\Z_p(\theta)/p\Z_p(\theta)$ is trivial, we may identify $\Z_p(\theta)/p\Z_p(\theta)=\F_p$ as trivial $G$-modules.

Conversely, if a pro-$p$ group $G$ comes endowed with a continuous $G$ module $M$, isomorphic to $\Z_p$ as an abelian pro-$p$ group, then the action gives rise to an orientation $\theta\colon G\to1+p\Z_p$ such that $x.v=\theta(x)v$ for every $x\in G$ and $v\in M$.
Therefore, one may reformulate the definition of 1-cyclotomicity as follows.

\begin{defin}\label{defin:1cyc or}\rm
	An oriented pro-$p$ group $(G,\theta)$ is said to be Kummerian if the natural map
	\[
	\rmH^1(G,\Z_p(\theta)/p^n\Z_p(\theta))\longrightarrow\rmH^1(G),
	\]
	induced in cohomology by the epimorphism of continuous $G$-modules
	$$\xymatrix{\Z_p(\theta)/p^n\Z_p(\theta)\ar@{->>}[r] & \Z_p(\theta)/p\Z_p(\theta)=\F_p},$$
	is surjective.
	Moreover, $(G,\theta)$ is said to be 1-cyclotomic if $(H,\theta\vert_H)$ is a Kummerian oriented pro-$p$ group for every subgroup $H$ of $G$.
\end{defin}


\subsection{Maximal pro-$p$ Galois group}\label{ssec:GK 1cyc}
Oriented pro-$p$ groups arise naturally in Galois theory.

Let $\K$ be a field containing a root of unity of order $p$, and let $\mu_{p^\infty}\subseteq\K(p)^\times$ denote the group of the roots of 1 of order a power of $p$ lying in the maximal pro-$p$ Galois extension $\K(p)$ of $\K$.
The action of the maximal pro-$p$ Galois group $G_{\K}(p)$ on $\mu_{p^\infty}$ induces the pro-$p$ cyclotomic character
\[
\theta_{\K}\colon G_{\K}(p)\longrightarrow1+p\Z_p,
\]
satisfying $g(\zeta)=\zeta^{\theta_{\K}(g)}$ for all $g\in G_{\K}(p)$ and $\zeta\in\mu_{p^\infty}$.
Then $\Img(\theta_{\K})=1+p^f\Z_p$, where $f\in\dbN\cup\{\infty\}$ is maximal such that
$\K$ contains a root of unity of order $p^f$ (if $f =\infty$, we set $p^\infty = 0$).
Then $\theta_{\K}$ is a torsion-free orientation if $p\neq2$ or if $p=2$ and $\sqrt{-1}\in\K$.

The $G_{\K}(p)$-module $\Z_p(\theta_{\K})$ coincides with the 1st Tate twist $\Z_p(1)$ (cf. \cite[Def.~7.3.6]{nsw:cohn}), and moreover for every $n\geq1$ one has the equality of continuous $G_{\K}(p)$-modules
$$\Z_p(\theta_{\K})/p^n=\left\{\:\zeta\in\mu_{p^\infty}\:\mid\:\zeta^{p^n}=1\:\right\}.$$
By Kummer theory, the oriented pro-$p$ group $(G_{\K}(p),\theta_{\K})$ is Kummerian --- as shown {\sl ante litteram} by J.P.~Labute, see \cite[p.~131]{labute:demushkin}.

Observe that if $H$ is a subgroup of $G_{\K}(p)$, then $H=G_{\mathbb{L}}(p)$ for some algebraic pro-$p$ extension $\mathbb{L}/K$ (and thus clearly $\mathbb{L}$ contains the roots of 1 of order $p$), and moreover the pro-$p$ cyclotomic character $\theta_{\mathbb{L}}\colon H\to1+p\Z_p$ is the restriction of $\theta_{\K}$ to $H$, so that $(H,\theta_{\K}\vert_H)$ is Kummerian, and thus $(G_{\K}(p),\theta_{\K})$ is 1-cyclotomic.


\subsection{Detecting 1-cyclotomic oriented pro-$p$ groups}\label{ssec:detect}
An oriented pro-$p$ group $(G,\theta)$ has a distinguished normal subgroup
\[
K_\theta(G)=\left\langle\:y^x\cdot y^{-\theta(x)^{-1}}\:\mid\:x\in G,y\in\Ker(\theta)\:\right\rangle
\]
(cf., e.g., \cite[\S~3]{eq:kummer} or \cite[p.~2]{qw:bogomolov}).
We remark the following:
\begin{itemize}
	\item[(a)] $K_\theta(G)\subseteq\Ker(\theta)$;
	\item[(b)] moreover, $K_\theta(G)\supseteq \Ker(\theta)'$, so that $\Ker(\theta)/K_\theta(G)$ is abelian;
	\item[(c)] finally, if $\theta=\mathbf{1}_G$ --- i.e., $\theta$ is constantly equal to 1 --- then $K_\theta(G)$ is the derived subgroup $G'$.
\end{itemize}

One has the following group-theoretic characterization of Kummerian oriented pro-$p$ group (cf. \cite[Thm.~5.6]{eq:kummer}, see also \cite[Prop.~2.6]{qw:bogomolov}).

\begin{prop}\label{prop:K}
	Let $(G,\theta)$ be a finitely generated oriented pro-$p$ group, and suppose that $\Img(\theta)\subseteq1+4\Z_2$ if $p=2$.
	The following are equivalent:
	\begin{itemize}
		\item[(a)] $(G,\theta)$ is Kummerian;
		\item[(b)] $G/K_\theta(G)$ is a torsion-free pro-$p$ group;
		\item[(c)] $\Ker(\theta)/K_\theta(G)$ is a free abelian pro-$p$ group;
		\item[(d)] there is a minimal presentation $\pi\colon F\twoheadrightarrow G$ --- i.e., $F$ is a free pro-$p$ group and $\Ker(\pi)\subseteq\Phi(F)$ --- such that
		$$\Ker(\pi)\subseteq K_{\theta\circ\pi}(F).$$
	\end{itemize}
\end{prop}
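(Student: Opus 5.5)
The proof runs through the quotient $\bar G=G/K_\theta(G)$ and the abelian normal subgroup $\bar N=\Ker(\theta)/K_\theta(G)$. I will prove (b)$\Leftrightarrow$(c) and (a)$\Leftrightarrow$(c) by analysing the structure of $\bar G$, and (c)$\Leftrightarrow$(d) by lifting to a free pro-$p$ group.

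\emph{Structure of $\bar G$ and (b)$\Leftrightarrow$(c).} By remarks (a)--(b) in \S\ref{ssec:detect}, $\bar N$ is abelian. It sits in $1\to\bar N\to\bar G\to\Img(\theta)\to1$, and $\Img(\theta)$ is torsion-free: it lies in $1+p\Z_p$, or in $1+4\Z_2$ if $p=2$. Hence it is trivial or $\simeq\Z_p$, so the sequence splits. By the definition of $K_\theta(G)$, conjugation by $x$ acts on $\bar N$ as multiplication by the scalar $\theta(x)^{-1}$. Therefore every closed $\Z_p[\![\Img\theta]\!]$-submodule of $\bar N$ is a $\Z_p$-submodule.
\begin{itemize}
\item Write $G=\langle x_1,\dots,x_d\rangle$ with $\theta(x_i)=1$ for $i\geq2$, after a change of generators. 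Then $\bar N$ is generated as a $\Z_p$-module by the images of $x_2,\dots,x_d$ and $x_1^{\lambda}$, where $\Img\theta=\langle\theta(x_1)\rangle$ and $\lambda\in\Z_p$ is suitable.
\item So $\bar N$ is a finitely generated $\Z_p$-module.
\item Since $\Img\theta$ is torsion-free and $\bar G\simeq\bar N\rtimes\Img\theta$, the group $\bar G$ is torsion-free iff $\bar N$ is torsion-free. For a finitely generated $\Z_p$-module this is the same as being free, which gives (b)$\Leftrightarrow$(c).
\end{itemize}

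\emph{(a)$\Leftrightarrow$(c).} I will use crossed homomorphisms. A $1$-cocycle $c\colon G\to\Z_p(\theta)/p^n$ satisfies $c(xy)=c(x)+\theta(x)c(y)$. On $\Ker\theta$ it is a homomorphism, and one checks that $c(y^x)=\theta(x)^{-1}c(y)$. Hence $c$ vanishes on $K_\theta(G)$, and cocycles are the same as cocycles of $\bar G$. Since $H^1(\bar G,\F_p)=H^1(G)$ (as $K_\theta(G)\subseteq\Phi(G)$), Kummerianity of $G$ reduces to that of $\bar G\simeq\bar N\rtimes\Img\theta$.
\begin{itemize}
\item If $\bar N\simeq\Z_p^r$ is free, I write down explicit lifts. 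A homomorphism $\alpha\colon\bar G\to\F_p$ is determined by its values on a $\Z_p$-basis of $\bar N$ and on a generator of $\Img\theta$. Define $c$ on $\bar N$ as the corresponding $\Z_p$-linear map to $\Z/p^n$, and on the complement by any lift; the semidirect structure with scalar action makes the cocycle identity consistent. So (c)$\Rightarrow$(a).
\item Conversely, suppose $\bar N$ has a nontrivial torsion element. Choose $t\in\bar N$ with $t\notin p\bar N$ and $t^{p^m}=1$, which is possible after adjusting modulo $p\bar N$.
\item Choose $\alpha\in H^1(G)$ with $\alpha(t)\neq0$. Any lift $c$ to $\Z/p^n$ with $n>m$ is additive on $\bar N$, so $p^m c(t)=0$. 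This forces $c(t)\in p\Z/p^n$, contradicting $c(t)\equiv\alpha(t)\neq0\pmod p$.
\end{itemize}
The delicate point, which I expect to be the main obstacle, is ensuring that such a $t$ can be taken outside $\Phi(\bar G)$, i.e.\ detected by $H^1$. I would derive this from the splitting $\bar N=T\oplus\Z_p^s$ as $\Z_p$-modules, which is stable under the scalar action, so that $T$ survives in $\bar G^{\ab}/p$.

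\emph{(c)$\Leftrightarrow$(d).} Let $\pi\colon F\to G$ be a minimal presentation and set $K_F=K_{\theta\circ\pi}(F)$; then $\pi(K_F)=K_\theta(G)$. The free pro-$p$ group $F$ is Kummerian, since $\rmH^2(F,\cdot)=0$, so by the above $F/K_F\simeq\Z_p^{m}\rtimes\Img\theta$ with $\Z_p^m$ free. Moreover $\bar G=(F/K_F)/\bar R$, where $\bar R$ is the image of $\Ker\pi$. If $\Ker\pi\subseteq K_F$, then $\bar G\simeq F/K_F$ is torsion-free, so (d)$\Rightarrow$(b)$\Leftrightarrow$(c). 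Conversely, assume (c) holds for $G$.
\begin{itemize}
\item Then $\bar R$ lies in $\Ker(\theta\pi)/K_F\simeq\Z_p^m$, with free quotient. So $\bar R$ is a direct summand.
\item Since $\Ker\pi\subseteq\Phi(F)$ and $\bar G$ has the same number of generators as $F$, the ranks force $\bar R\subseteq p\cdot\Z_p^m$ modulo the generators' contribution.
\item A direct summand contained in $p$ times the module is zero. Hence $\Ker\pi\subseteq K_F$.
\end{itemize}
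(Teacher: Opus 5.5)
The paper does not prove this proposition. It quotes it from Efrat--Quadrelli (Thm.~5.6) and Quadrelli--Weigel (Prop.~2.6), so there is no in-paper argument to compare against.

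Your proposal is a correct, self-contained proof along the natural route. Continuous $1$-cocycles with values in $\Z_p(\theta)/p^n$ are additive on $\Ker(\theta)$ and kill $K_\theta(G)$. So everything takes place in the split extension $\bar G=\bar N\rtimes\Img(\theta)$, where $\bar N$ is a finitely generated $\Z_p$-module on which $G$ acts by scalars. Condition (d) is obtained by comparing $\bar G$ with $F/K_{\theta\circ\pi}(F)$, which is torsion-free because free pro-$p$ groups are Kummerian. Compared with the citation, your route reduces the whole equivalence to elementary facts about such semidirect products.

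Two steps should be tightened, and both rest on one fact you never state.

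First, your ``delicate point''. Stability of a splitting $\bar N=T\oplus\Z_p^s$ under the action is automatic, since the action is by scalars. It is not what makes $T$ visible in $\rmH^1$. What matters is that the scalars are $\equiv 1\pmod p$. If $\sigma$ lifts a generator of $\Img(\theta)$ and $\gamma=\theta(\sigma)$, then, writing $\bar N$ additively, $[\bar G,\bar G]=(\gamma-1)\bar N\subseteq p\bar N$. Hence $\Phi(\bar G)\cap\bar N=p\bar N$, and every $t\in\bar N\smallsetminus p\bar N$ is detected by some $\alpha\in\rmH^1(G)$.

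Second, in (c)$\Rightarrow$(d), replace the sentence ``the ranks force $\bar R\subseteq p\cdot\Z_p^m$ modulo the generators' contribution''. Set $\bar N_F=\Ker(\theta\circ\pi)/K_F$. Then $\Ker(\pi)\subseteq\Phi(F)$ gives $\bar R\subseteq\Phi(F/K_F)\cap\bar N_F=p\bar N_F$ by the same fact, and a direct summand contained in $p\bar N_F$ is trivial. Equivalently, the fact yields $d(G)=\mathrm{rk}\,\bar N+\epsilon$ and $d(F)=\mathrm{rk}\,\bar N_F+\epsilon$ with the same $\epsilon\in\{0,1\}$. So $\bar N_F\twoheadrightarrow\bar N$ is a surjection of free modules of equal rank, hence injective.

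A minor point in (c)$\Rightarrow$(a): on the complement $\Img(\theta)\simeq\Z_p$ you need a cocycle, not ``any lift'' of values. One exists with arbitrary prescribed value at a generator because $\Z_p$ is free pro-$p$.
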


In particular, if $\theta$ is constantly equal to 1, then $(G,\theta)$ is 1-cyclotomic if and only if $H^{\ab}$ is a free abelian pro-$p$ group for any closed subgroup $H$ of $G$.
Pro-$p$ groups with this property are called {\sl absolutely torsion-free pro-$p$ groups}, and were introduced by T.~W\"urfel in \cite{wurfel}.
Therefore, it is straightforward to deduce the following (cf., e.g., \cite[Rem.~2.3]{cq:galfeat})

\begin{exam}\label{ex:free abel 1cyc}\rm
	An oriented pro-$p$ group $(G,\theta)$, with $G$ a free abelian pro-$p$ group, is 1-cyclotomic if and only if $\theta$ is constantly equal to 1.
\end{exam}


\subsection{Demu\v skin pro-$p$ groups and their canonical orientation}\label{ssec:demushkin orientation}

J.P.~Labute showed in his PhD thesis that every Demu\v skin group $G$ comes endowed with a canonical orientation $\theta_G\colon G\to1+p\Z_p$ such that the associated oriented pro-$p$ group $(G,\theta_G)$ is Kummerian, and this is the only one completing $G$ into a Kummerian oriented pro-$p$ group  (cf. \cite[Thm.~4]{labute:demushkin}).
In particular, if $G$ has a presentation \eqref{eq:pres demushkin}, then
\[
\theta_G(y_1)=(1-q)^{-1}\qquad\text{and}\qquad\theta_G(x_1)=\ldots=\theta_G(x_d)=\theta_G(y_2)=\ldots=\theta_G(y_d)=1.
\]
Hence, a Demu\v skin group is determined --- up to isomorphism --- by the image of its canonical orientation together with the number of generators (cf. \cite[Ch.~I, \S~4.5, Rem.~2.a]{serre:galc}).

Moreover, one may show that the oriented pro-$p$ group $(G,\theta_G)$ is also 1-cyclotomic (cf. \cite[Thm.~5.5]{qw:cyc}).
Altogether, one has the following.

\begin{prop}\label{prop:orient demushkin}
	Let $G$ be a Demu\v skin group.
	The canonical orientation $\theta_G\colon G\to1+p\Z_p$ is the only orientation completing $G$ into a 1-cyclotomic oriented pro-$p$ group.
\end{prop}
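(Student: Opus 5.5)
The plan is to split the statement into an existence part and a uniqueness part. Both will be reduced to results already quoted in \S~\ref{ssec:demushkin orientation}. The link between them is the observation that 1-cyclotomicity is the stronger of the two notions: taking $H=G$ in Definition~\ref{defin:1cyc or}, every 1-cyclotomic oriented pro-$p$ group is in particular Kummerian.

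\emph{Existence.} We must show that $(G,\theta_G)$ is 1-cyclotomic. For this we invoke \cite[Thm.~5.5]{qw:cyc}, which asserts exactly that. To give the reader the idea, we recall the mechanism behind it, using Example~\ref{ex:BK}(b). A subgroup $H\leq G$ is either free pro-$p$ or an open Demu\v skin subgroup.
\begin{itemize}
\item If $H$ is free, then $(H,\theta_G\vert_H)$ is Kummerian for any orientation. Indeed, $H/K_{\theta}(H)$ is a semidirect product of a free abelian pro-$p$ group with a quotient of $1+p\Z_p$, hence torsion-free, and Proposition~\ref{prop:K} applies.
\item If $H$ is an open Demu\v skin subgroup, one checks that $\theta_G\vert_H=\theta_H$, the canonical orientation of $H$. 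Since $(H,\theta_H)$ is Kummerian by Labute, $(H,\theta_G\vert_H)$ is Kummerian.
\end{itemize}
The identification $\theta_G\vert_H=\theta_H$ is the only nontrivial point. It follows from the characterization of $\theta_H$ as the unique orientation for which $\Z_p(\theta_H)$ is the dualizing module of $H$, together with the fact that the dualizing module restricts well to open subgroups.

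\emph{Uniqueness.} Suppose $\theta$ is an orientation with $(G,\theta)$ 1-cyclotomic. Then $(G,\theta)$ is Kummerian, and by \cite[Thm.~4]{labute:demushkin} the Kummerian orientation of $G$ is unique, so $\theta=\theta_G$. For a self-contained version in the case of presentation \eqref{eq:pres demushkin}, we would use Proposition~\ref{prop:K}(d) as follows.
\begin{enumerate}
\item Let $\pi\colon F\to G$ be the minimal presentation, and write $\tilde\theta=\theta\circ\pi$.
\item Kummerianity forces $r=x_1^q[x_1,y_1]\cdots[x_d,y_d]\in K_{\tilde\theta}(F)$.
\item Work in $F/K_{\tilde\theta}(F)$. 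There, conjugation acts on the abelian normal subgroup $\Ker(\tilde\theta)/K_{\tilde\theta}(F)$ by powers $\theta(x)^{-1}$.
\item Expanding $r$ in this quotient, the condition $r\equiv 1$ becomes a system of equations in the values $\theta(x_i)$ and $\theta(y_i)$.
\item Solving the system should give $\theta(x_i)=1$ for all $i$, $\theta(y_i)=1$ for $i\geq2$, and $\theta(y_1)=(1-q)^{-1}$.
\end{enumerate}

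I expect step 5, the explicit bookkeeping inside $F/K_{\tilde\theta}(F)$, to be the main obstacle. It is delicate because $F/K_{\tilde\theta}(F)$ is only metabelian-like, and because the case $p=2$ requires the hypothesis $\Img(\theta)\subseteq1+4\Z_2$ for Proposition~\ref{prop:K} to apply. For this reason I would simply cite Labute's theorem, as the paper does, rather than redo the computation.
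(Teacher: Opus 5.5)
Your proposal is correct and follows the paper's argument exactly. A 1-cyclotomic orientation is in particular Kummerian (take $H=G$), so uniqueness is Labute's theorem \cite[Thm.~4]{labute:demushkin}, and existence is the cited result \cite[Thm.~5.5]{qw:cyc}. Your extra heuristic sketches are not needed for the proof. Be aware, though, that the free-subgroup bullet relies on Proposition~\ref{prop:K}. For $p=2$ that proposition requires $\Img(\theta)\subseteq1+4\Z_2$, so it does not cover cases such as $q=2$, where $\theta_G(y_1)=-1$.
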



\section{The family $\mathcal{F}_1$}\label{sec:G1}

In this section we focus on the first family of pro-$p$ groups $\mathcal{F}_1$, and $G$ will denote a pro-$p$ group with presentation
\[  G=\left\langle\:x_1,y_1,\ldots,x_d,y_d\:\mid\: \left[x_1^q,y_1\right]\cdots[x_{d},y_d]=1 \:\right\rangle\]
for some $d\geq2$ and some $p$-power $q=p^k$ with $k\in\{1,2,\ldots\}$ (and $k\geq2$ if $p=2$).

\subsection{Cohomology of $G$}\label{ssec:G2 cohom}
As underlined in Remark~\ref{rem:few rel}, $G$ satisfies the hypothesis of Proposition~\ref{prop:quadratic few rel}, considering $\{x_2,y_2,x_1,y_1,\ldots,y_d\}$ as an ordered minimal generating set.
In particular, $\{x_1^\ast,\ldots,y_d^\ast\}$ is a basis of $\rmH^1(G)$ and $\{x_2^\ast\smallsmile y_2^\ast\}$ a basis of $\rmH^2(G)$, and one has 
\[x_i^\ast\smallsmile y_i^\ast=x_2^\ast\smallsmile y_2^\ast\qquad\text{for all }i=3,\ldots,d,\]
while 
\[\begin{split}
	&   x_i^\ast\smallsmile x_j^\ast=y_i^\ast\smallsmile y_j^\ast=0\qquad\text{for all }1\leq i<j\leq d,\\
	& x_1^\ast\smallsmile y_1^\ast=x_i^\ast\smallsmile y_j^\ast=0\qquad \text{for all }i\neq j.
\end{split}\]
Also, the Euler characteristic of $G$ is  \[ E(G)=\sum_{n\geq0}(-1)^n\dim\left( \rmH^n(G)\right)=1-2d+1=2(1-d).\]

\begin{rem}\label{rem:BK G1}\rm
 Since $\cd(G)=2$, also $\cd(U)=2$ for every open subgroup $U$ of $G$ (cf. \cite[Ch.~I, \S~3.3, Prop.~14]{serre:galc}).
 Therefore, in order to check whether $\bfH^\bullet(U)$ is quadratic (and thus, whether $G$ is Bloch-Kato), it suffices to study $\rmH^2(U)$, and hence the defining relations of $U$.
\end{rem}



\subsection{The only candidate orientation}

Let $H$ be the subgroup of $G$ generated by $z=x_1^q$, and by $y_1,x_2,\ldots,y_d$.
Then $H$ is the Demu\v skin group
\[
H=\langle\:z,y_1,x_2,\ldots,y_d\:\mid\:[z,y_1][x_2,y_2]\cdots[x_d,y_d]=1\:\rangle.
\]
Suppose that $\theta\colon G\to1+p\Z_p$ is an orientation such that $(G,\theta)$ is a 1-cyclotomic oriented pro-$p$ group.
Then, also $(H,\theta\vert_H)$ is a 1-cyclotomic oriented pro-$p$ group, and hence necessarily
$$\theta(z)=\theta(y_1)=\ldots=\theta(y_d)=1$$
(cf. \S~\ref{ssec:demushkin orientation}).
Since $z=x_1^q$, one deduces that $\theta(x_1)^q=1$, and since $1+p\Z_p$ if $p\neq2$, and $1+4\Z_2$ if $p=2$, are torsion-free abelian pro-$p$ groups, one deduces that $\theta(x_1)=1$.

Altogether, the orientation $\theta$ must be constantly equal to 1, and therefore $G$ must be absolutely torsion-free (cf. \S~\ref{ssec:detect}).

\subsection{A subgroup of index $p$ and its abelianization}

Let $\phi\colon G\to\F_p$ be the homomorphism defined by $\phi(y_1)=1$ and
$$\phi(x_1)=\phi(x_2)=\ldots=\phi(x_d)=\phi(y_d)=0.$$
Then $U:=\ker\phi$ is a normal subgroup of $G$ of index $p$.
Now put $u=y_1^p$, and $z_h=x_1^{y_1^h}$, $x_i(h)=x_i^{y_1^h}$, and $y_i(h)=y_i^{y_1^h}$ for $h=0,\ldots,p-1$; $i=2,\ldots,d$.
Since $G/U=\{y_1^hU\:\mid\:h=0,\ldots,p-1\}$, the set
\[
\calX_U=\left\{\:u,z_h,x_i(h),y_i(h)\:\mid\:h=0,\ldots,p-1;\:i=2,\ldots,d\:\right\}
\]
generates $U$ as a pro-$p$ group.
More precisely, $U$ is the pro-$p$ group generated by $\calX_U$ and subject to the $p$ relations obtained conjugating the defining relation \eqref{eq:presG2} by $y_1^h$ for all $h=0,\ldots,p-1$.
Applying the equalities $[x_1^q,y_1]=z_0^{-q}\cdot z_1^q$ and
$$[x_1^q,y_1]^{y_1^{p-1}}=z_{p-1}^{-q}\cdot y_1^{-p}\cdot x_1^q\cdot y_1^p=z_{p-1}^{-q}\cdot z_0^q\cdot [z_0^q,u],$$
one obtains the $p$ relations
\begin{equation}\label{eq:rel U2 ab}
	\begin{split}
		& z_0^{-q}z_1^q[x_2(0),y_2(0)]\cdots[x_{d}(0),y_d(0)] = 1\\
		& z_1^{-q}z_2^q[x_2(1),y_2(1)]\cdots[x_{d}(1),y_d(1)] = 1 \\
		&\vdots\\
		& z_{p-1}^{-q}z_0^q[z_0^q,u][x_2(p-1),y_2(p-1)]\cdots
		[x_{d}(p-1),y_d(p-1)]=1.
	\end{split}
\end{equation}

By Remark \ref{rem:BK G1}, one has also $\rmH^n(U)=0$ for $n\geq 3$.
Observe that
\[
E(U)=1-|\calX_U|+p=1-(1+p+2p(d-1))+p=p(2-2d),
\]
and indeed $E(U)=p\cdot E(G)=[G:U]\cdot E(G)$
(cf. \cite[Ch.~I, \S~4.1, Exer.~(b)]{serre:galc}).
Altogether, one has the following.

\begin{prop}\label{prop:G2 no abs torfree}
	The pro-$p$ group $G$ is not absolutely torsion-free.
\end{prop}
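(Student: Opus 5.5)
The plan is to show that the index-$p$ subgroup $U=\ker\phi$ has an abelianization with nontrivial torsion. By Proposition~\ref{prop:K} (with $\theta\equiv1$), this means $(U,\mathbf{1}_U)$ is not Kummerian, so $G$ is not absolutely torsion-free. The starting point is the presentation of $U$ on the generating set $\calX_U$ with the $p$ relations \eqref{eq:rel U2 ab}. The Euler characteristic computation $E(U)=p\,E(G)$ together with $\cd(U)=2$ confirms that $\calX_U$ has the right cardinality $1+p+2p(d-1)=\dim\rmH^1(U)$. Since $E(U)=1-\dim\rmH^1(U)+\dim\rmH^2(U)$, we get $\dim\rmH^2(U)=p$. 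Hence $\calX_U$ is a minimal generating set and \eqref{eq:rel U2 ab} is a minimal set of defining relations. By Fact~\ref{fact}, $U^{\ab}$ is then the abelian pro-$p$ group generated by the images of $\calX_U$, subject to the abelianized relations.

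Next I abelianize \eqref{eq:rel U2 ab}, writing the group additively. Every commutator $[x_i(h),y_i(h)]$ and also $[z_0^q,u]$ vanish, so the relations become
\[
q(\bar z_1-\bar z_0)=0,\quad q(\bar z_2-\bar z_1)=0,\quad\ldots,\quad q(\bar z_0-\bar z_{p-1})=0 .
\]
The last relation is the negative of the sum of the others, so the relation module has rank $p-1$. Therefore
\[
U^{\ab}\simeq \Z_p^{\,1+1+2p(d-1)}\times(\Z/q\Z)^{p-1},
\]
where the free part is generated by $\bar u$, $\bar z_0$, and the $\bar x_i(h),\bar y_i(h)$, and the torsion part is generated by the elements $\bar z_h-\bar z_{h-1}$. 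Concretely, $z_0^{-1}z_1=[x_1,y_1]$ has image of order exactly $q$ in $U^{\ab}$. Since $q\geq p$, this is a nontrivial torsion element, and $U^{\ab}$ is not free.

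The main point to be careful about is justifying that the abelianized relations are exactly those listed, with no hidden further relations. This is guaranteed because $U^{\ab}=F_U^{\ab}/\overline{R_U}$, where $F_U$ is free on $\calX_U$ and $R_U$ is the normal closure of the $p$ listed words. I need to make sure the Reidemeister--Schreier presentation of $U$ is exactly $\calX_U$ with those $p$ conjugates of the relator. This holds because $U$ is normal of index $p$ with transversal $\{y_1^h\}$, and the conjugates of the single relator by the transversal normally generate the relation subgroup in $F_U$. With that in place, the order of $\bar z_1-\bar z_0$ follows from elementary divisor theory of the matrix of the relations. We conclude that $U$ has non-torsion-free abelianization, and hence $G$ is not absolutely torsion-free. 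Combined with the previous subsection, which showed that the only candidate orientation is trivial, this yields Theorem~\ref{thmG2}.
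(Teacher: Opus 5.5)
Your argument is correct and is essentially the paper's own: abelianize the Reidemeister--Schreier presentation of $U=\ker\phi$ on $\calX_U$ with the $p$ relations \eqref{eq:rel U2 ab}, and observe that $\bar z_1-\bar z_0$ (the image of $[x_1,y_1]$) has order $q$. Two small remarks: first, the Euler-characteristic step does not by itself give $\dim\rmH^1(U)=|\calX_U|$ (that follows instead from the relators lying in $\Phi(F_U)$, since $q\geq p$), but you never need minimality, because $U^{\ab}$ can be read off from any presentation; second, your count $U^{\ab}\simeq\Z_p^{2+2p(d-1)}\times(\Z/q)^{p-1}$ is the correct one, since the $p$ abelianized relations sum to zero, so the paper's $(\Z/q)^p$ overcounts the torsion by one factor.
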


\begin{proof}
	Consider the open subgroup $U$ of $G$, and for any element $w\in\calX_U$, let $\bar w=wU'$ denote the corresponding coset in $U^{\ab}$.
	Then $U^{\ab}$ is the abelian pro-$p$ group generated by
	\[
	\bar\calX_U=\left\{\:\bar u,\bar z_h,\overline{x_i(h)},\overline{y_i(h)}\:
	\mid\:h=0,\ldots,p-1,\:i=2,\ldots,d\:\right\},
	\]
	and, by \eqref{eq:rel U2 ab}, subject to the $p$ relations
	\[
	\left(\bar z_0^{-1}\bar z_1\right)^q=
	\left(\bar z_1^{-1}\bar z_2\right)^q=\ldots=
	\left(\bar z_{p-1}^{-1}\bar z_0\right)^q=1,
	\]
	as $[z_0^q,u],[x_i(h),y_i(h)]\in U'$ for every $h=0,\ldots,p-1$ and $i=2,\ldots,d$.
	Hence, $U^{\ab}$ is not torsion free --- in particular, it is isomorphic to $\Z_p^{2(d-1)p+1}\times (\Z/q)^p$.
\end{proof}

\begin{rem}\label{rem:U BK 1}\rm
	Since $\rmH^n(U)=0$ for $n\geq 3$, one may show --- employing \cite[Prop.~2.4--2.5 and Rem.~2.6]{qsv:quadratic} --- that the cohomology algebra $\bfH^\bullet(U)$ is a quadratic algebra: therefore, the subgroup $U$ may not be used to prove that $G$ is not Bloch-Kato.
\end{rem}



\subsection{Massey products}

\begin{prop}\label{prop:massey G2}
	If $n\leq q$ then $G$ satisfies the strong $n$-fold Massey vanishing property.    
\end{prop}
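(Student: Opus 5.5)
Given $\alpha_1,\ldots,\alpha_n\in\rmH^1(G)$ with $\alpha_i\smallsmile\alpha_{i+1}=0$ for all $i$, we want a homomorphism $\rho\colon G\to\dbU_{n+1}$ with $\rho_{i,i+1}=\alpha_i$; by Proposition~\ref{prop:massey unip}--(b) this gives vanishing. A homomorphism from $G$ is the same as an assignment of matrices to $x_1,\ldots,y_d$ satisfying the single relation $[x_1^q,y_1]\cdots[x_d,y_d]=1$.

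The key observation is that for $n\leq q$ every $A\in\dbU_{n+1}$ satisfies $A^q=I$. Indeed $A=I+N$ with $N^{n+1}=0$, and $A^{p^k}=I+N^{p^k}$ in characteristic $p$. Since $p^k=q\geq n+1$ unless $q=n$, we need to check that case too: if $q=n$ then $N^{q}$ has only its $(1,n+1)$-entry possibly nonzero. To avoid this issue, I would instead first restrict to the case where the upper-left block image makes $x_1^q$ harmless. Concretely, the approach is as follows. Write $\alpha_i=\sum_j(a_{ij}x_j^\ast+b_{ij}y_j^\ast)$. Consider the Demu\v skin group $D=\langle x_1,\ldots,y_d\mid [x_1,y_1]\cdots[x_d,y_d]\rangle$ with $q=0$, and the pro-$p$ group $\tilde G=\langle x_1,\ldots,y_d\mid[x_2,y_2]\cdots[x_d,y_d]\rangle$, which is a free product of a free pro-$p$ group on $x_1,y_1$ and a Demu\v skin group (with $q=0$) on the rest.

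Plan: first use the cup-product relations computed in \S\ref{ssec:G2 cohom}. They coincide with those of $\tilde G$ (the pair $x_1,y_1$ is cup-orthogonal to everything, and $x_i^\ast\smallsmile y_i^\ast$ are all equal for $i\ge2$). Thus the hypothesis $\alpha_i\smallsmile\alpha_{i+1}=0$ in $\rmH^2(G)$ translates into the same condition in $\rmH^2(\tilde G)$, provided the inflation $\rmH^2(\tilde G)\to\rmH^2(G)$ via the identification of generators is compatible. More simply, the coefficient identity is that the $\F_p$-bilinear form $\sum_{j\geq2}(a_{ij}b_{i+1,j}-b_{ij}a_{i+1,j})$ vanishes. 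Then $\tilde G$ satisfies the strong Massey vanishing property, since free products preserve it and Demu\v skin groups have it by Example~\ref{ex:demushkin massey}--(a). Hence we obtain $\tilde\rho\colon\tilde G\to\dbU_{n+1}$ with the right superdiagonal.

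The second step is to upgrade $\tilde\rho$ to $G$. Set $X_j=\tilde\rho(x_j)$, $Y_j=\tilde\rho(y_j)$, so that $[X_2,Y_2]\cdots[X_d,Y_d]=I$. We need $[X_1^q,Y_1]=I$. By the observation above, $X_1^q=I+N^q$ where $N=X_1-I$ is strictly upper triangular, so $N^q$ is supported on entries with $j-i\geq q\geq n$, i.e.\ only the $(1,n+1)$-entry. Hence $X_1^q$ is central in $\dbU_{n+1}$, and $[X_1^q,Y_1]=I$. This step, handling the borderline case $q=n$ via centrality of $\Zen(\dbU_{n+1})$ (or equivalently Fact~\ref{fact:matrix}), is the one requiring care, and it is exactly where the hypothesis $n\leq q$ is used. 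The assignment therefore defines $\rho\colon G\to\dbU_{n+1}$ with $\rho_{i,i+1}=\alpha_i$, which completes the proof.

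The main obstacle I expect is the first step: justifying that the cup-product vanishing conditions in $G$ and $\tilde G$ agree. This follows from Proposition~\ref{prop:quadratic few rel}--(a) applied to both presentations, which give identical structure constants. If quoting strong Massey vanishing for free products is not available, I would fall back on constructing $\tilde\rho$ directly: send $x_1,y_1$ to arbitrary lifts, and the remaining generators via Demu\v skin vanishing applied to the restricted classes.
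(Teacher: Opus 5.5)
Your proof is correct and follows the paper's argument. The paper likewise transfers the cup-product condition, via the bilinear form $\sum_{i\geq2}(\alpha_h(x_i)\alpha_{h+1}(y_i)-\alpha_h(y_i)\alpha_{h+1}(x_i))$, to the Demu\v skin group on $x_2,\ldots,y_d$ (realized as the quotient $G/\langle\langle x_1,y_1\rangle\rangle$). It gets $A_i,B_i$ for $i\geq2$ from that group's strong Massey vanishing, picks $A_1,B_1$ freely, and uses $A_1^q\in\Zen(\dbU_{n+1})$ for $n\leq q$.

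Your free factor on $x_1,y_1$ in $\tilde G$, and your fallback, are just a repackaging of that free choice of $A_1,B_1$. Your computation $(I+N)^q=I+N^q$, covering the borderline case $q=n$, spells out a step the paper only asserts.
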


\begin{proof}
	Let $\alpha_1,\ldots,\alpha_n$ be a sequence of length $n$ of elements of $\rmH^1(G)$ satisfying \eqref{eq:cup 0 massey}.
	Since $n\leq q$, for any matrix $A\in\dbU_{n+1}$ one has $A^q\in\Zen(\dbU_{n+1})$.
	Therefore, if $A_1,B_1\in\dbU_{n+1}$ such that their $(h,h+1)$-entries, for $h=1,\ldots,n$, are equal to $\alpha_h(x_1)$ and $\alpha_h(y_1)$ respectively, then $[A_1^q,B_1]=I_{n+1}$.
	We will find matrices $A_2,\ldots,B_d\in\dbU_{n+1}$ such that 
	\begin{equation}\label{eq:comm A2B2 bla}
		[A_2,B_2]\cdots[A_d,B_d]=I_{n+1},
	\end{equation}
	and for all $i=2,\ldots,d$ the $(h,h+1)$-entries of $A_i$ and $B_i$ are equal to $\alpha_h(x_i)$ and $\alpha_h(y_i)$ respectively, 
	so that the assignment $x_i\mapsto A_i$ and $y_i\mapsto B_i$, $i=1,\ldots,d$, yields a morphism $\rho\colon G\to\dbU_{n+1}$ satisfying the properties prescribed in Proposition~\ref{prop:massey unip}--(b).
	
	Let $N$ be the normal subgroup of $G$ generated by $x_1,y_1$ as a normal subgroup, set $\bar G=G/N$ --- namely, $\bar G$ is the Demu\v skin group 
	\[\bar G=\left\langle\,\bar x_2,\,\bar y_2,\ldots,\,\bar x_d,\,\bar y_d\:\mid\:[\bar x_2,\bar y_2]\cdots[\bar x_d,\bar y_d]=1\:\right\rangle,\]
	where $\bar w$ denotes the coset $wN\in\bar G$, for $w\in G$ ---
	and let $\pi\colon G\to\bar G$ denote the canonical orientation.
	Also, for $\beta\in\rmH^1(G)$, set 
	$$\bar \beta=\beta(x_2)\bar x_2^\ast+\beta(y_2)\bar y^\ast_2+\ldots+\beta(x_d)\bar x_d^\ast+\beta(y_d)\bar y^\ast_d\in\rmH^1(\bar G).$$
	Since $\alpha_1,\ldots,\alpha_n$ satisfy \eqref{eq:cup 0 massey}, one has 
	\[
	0=\alpha_h\smallsmile\alpha_{h+1}=\left(\sum_{i=2}^d\left(\alpha_h(x_i)\alpha_{h+1}(y_i)-\alpha_h(y_i)\alpha_{h+1}(x_i)\right)\right)(x_2^\ast\smallsmile y_2^\ast),
	\]
	and hence the sum above is 0. 
	Since $\bar G$ is a Demu\v skin group,
	\[\begin{split}
		\bar \alpha_h\smallsmile\bar \alpha_{h+1} &=
		\sum_{i=2}^d\left(\alpha_h(x_i)\alpha_{h+1}(y_i)-\alpha_h(y_i)\alpha_{h+1}(x_i)\right)(\bar x_i^\ast\smallsmile \bar y_i^\ast)
		\\
		&=\left(\sum_{i=2}^d\left(\alpha_h(x_i)\alpha_{h+1}(y_i)-\alpha_h(y_i)\alpha_{h+1}(x_i)\right)\right)(\bar x_2^\ast\smallsmile \bar y_2^\ast),
	\end{split}\]
	and hence also the sequence $\bar \alpha_1,\ldots,\bar \alpha_n$ of elements of $\rmH^1(\bar G) $ satisfies \eqref{eq:cup 0 massey}.
	Therefore, by Proposition~\ref{prop:massey unip}--(b) there exists a homomorphism $\eta\colon \bar G\to\dbU_{n+1}$ such that $$\eta_{h,h+1}= \bar \alpha_h\qquad\text{for all }h=1,\ldots,n-1,$$
	as the Demu\v skin group $\bar G$ has the strong $n$-fold Massey vanishing property, cf. Example~\ref{ex:demushkin massey}--(a).
	Now set $A_i=\eta(\bar x_i)$ and $B_i=\eta(\bar y_i)$ for $i=2,\ldots,d$.
	Then these matrices satisfy \eqref{eq:comm A2B2 bla}, and this completes the proof.
\end{proof}


\subsection{The graded group algebra}
Consider the graded group algebra $\gr \FppG$, generated by $X_1,Y_2\ldots,X_d,Y_d$.
The initial form of the defining relation of $G$ --- i.e., the image in $G_2/G_3$ --- is the Lie polynomial
\[
(X_2Y_2-Y_2X_2)+\ldots+(X_dY_d-Y_dX_d).
\]
Since $G$ is a mild pro-$p$ group by Proposition~\ref{prop:mild},
\cite[Thm.~2.12--(3)]{Jochen} implies that
\[\begin{split}
    \grFp G&\simeq \frac{T^\bullet(X_1,Y_1,\dots,X_d,Y_d)}{([X_2,Y_2]+\dots+[X_d,Y_d])}\\&=T^\bullet(X_1,Y_1)\sqcup \frac{T^\bullet(X_2,Y_2,\dots,X_d,Y_d)}{([X_2,Y_2]+\dots+[X_d,Y_d])},
\end{split}\]
where the latter is a free product of quadratic $\F_p$-algebras.
In particular, Proposition~\ref{prop:properties intro}--(3) follows by \cite[Thm.~1.3]{expo}.

By Jennings' theorem, the restricted Lie algebra $\gr G$ is the free product of a free Lie algebra of rank $2$ --- generated by $X_1$ and $Y_1$ --- and a \textit{Demu\v skin Lie algebra} on $2d-2$ generators, namely, the restricted Lie algebra associated to a Demu\v skin group on $2d-2$ generators (with $q\neq2$). By \cite[Thm.~4.3]{simone:kurosh} and \cite[\S~1.3]{simone:koszul}, the restricted Lie algebra $\gr G$ is Bloch-Kato, with cohomology $\ext^\bullet_{u(\gr G)}(\F_p,\F_p)\simeq \bfH^\bullet(G)$. 
This proves Proposition~\ref{prop:gradedalgebras intro} for $\mathcal F_1$.


\section{The family $\mathcal{F}_2$}

In this section we focus on the pro-$p$ groups in the family $\mathcal{F}_2$, and $G$ will denote a pro-$p$ group with presentation
\[  G=\left\langle\:x_1,y_1,\ldots,x_d,y_d\:\mid\: x_1^q[x_1,y_1]\cdots[x_{d-1},x_d]=[z_1,z_2]=1 \:\right\rangle\]
where $d\geq2$, $q=p^k$ with $k\in\{1,2,\ldots,\infty\}$ (and $k\geq2$ if $p=2$), and $z_1,z_2\in\calX_G$, $\{z_1,z_2\}\neq\{x_i,y_i\}$ for any $i=1,\ldots,d$.
--- here we set $\calX_G=\{x_1,y_1,\ldots,y_d\}$.

Observe that $G$ is the quotient of the Demu\v skin group
\[
\tilde G=\left\langle\:x_1,y_1,\ldots,x_d,y_d\:\mid\: x_1^q[x_1,y_1]\cdots[x_{d-1},x_d]=1 \:\right\rangle
\]
over the normal subgroup generated by $z_1,z_2$ --- we will set $\pi\colon \tilde G\to G$ the canonical projection.


\subsection{Cohomology of $G$}\label{ssec:G1 cohom}
As underlined in Remark~\ref{rem:few rel}, $G$ satisfies the hypothesis of Proposition~\ref{prop:quadratic few rel}, considering $\calX_G=\{x_1,y_1,\ldots,y_d\}$ as an ordered minimal generating set.
In particular, $\{x_1^\ast,\ldots,y_d^\ast\}$ is a basis of $\rmH^1(G)$ and $\{x_2^\ast\smallsmile y_2^\ast,z_1^\ast\smallsmile z_2^\ast\}$ a basis of $\rmH^2(G)$, and one has 
\[x_i^\ast\smallsmile y_i^\ast=x_1^\ast\smallsmile y_1^\ast\qquad\text{for all }i=2,\ldots,d,\]
while 
\[ x_i^\ast\smallsmile x_j^\ast=y_i^\ast\smallsmile y_j^\ast= x_i^\ast\smallsmile y_j^\ast=0\qquad \text{for all }i\neq j
\]
if the two factors in the cup-products above are not $z_1,z_2$.

Also, the Euler characteristic of $G$ is  \[ E(G)=\sum_{n\geq0}(-1)^n\dim\left( \rmH^n(G)\right)=1-|\calX_G|+2=3-2d.\]

\begin{rem}\label{rem:BK G2}\rm
As observed in Remark \ref{rem:BK G1} for $\mathcal{F}_1$, since $\cd(G)=2$, also $\cd(U)=2$ for every open subgroup $U$ of $G$.
 Therefore, in order to check whether $\bfH^\bullet(U)$ is quadratic (and thus, whether $G$ is Bloch-Kato), it suffices to study $\rmH^2(U)$, and hence the defining relations of $U$.
\end{rem}


\subsection{The only candidate orientation}\label{ssec:G1 only orient}

Since $G$ is a quotient of the Demu\v skin group $\tilde G$, if $\theta\colon G\to 1+p\Z_p$ is an orientation turning $G$ into a 1-cyclotomic oriented pro-$p$ group, then such an orientation must be ``inherited'' from $\tilde G$, as shown by the following.

\begin{thm}
	Let $(G,\theta)$ be a Kummerian oriented pro-$p$ group, and let $p:\tilde G\to G$ be a Frattini cover, i.e., $\ker (p)\subseteq \Phi(\tilde G)$. Then, $(\tilde G,\theta\circ p)$ is Kummerian.
\end{thm}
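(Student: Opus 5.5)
Throughout I write $\varpi\colon\tilde G\to G$ for the Frattini cover in the statement (the statement calls it $p$) and $\tilde\theta=\theta\circ\varpi$. The plan is to use the group-theoretic characterization in Proposition~\ref{prop:K}, item (d), which is phrased in terms of a minimal presentation. Since $(G,\theta)$ is Kummerian, there is a minimal presentation $\pi\colon F\twoheadrightarrow G$ with $F$ free pro-$p$, $\Ker(\pi)\subseteq\Phi(F)$, and $\Ker(\pi)\subseteq K_{\theta\circ\pi}(F)$. The goal is to produce a minimal presentation $\tilde\pi\colon F\twoheadrightarrow\tilde G$ with $\varpi\circ\tilde\pi=\pi$. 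Then $\Ker(\tilde\pi)\subseteq\Ker(\pi)\subseteq K_{\theta\circ\pi}(F)=K_{\tilde\theta\circ\tilde\pi}(F)$, and Proposition~\ref{prop:K} gives that $(\tilde G,\tilde\theta)$ is Kummerian.

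\textbf{Step 1: lifting $\pi$.} I first compare generators. Since $\Ker(\varpi)\subseteq\Phi(\tilde G)$, the map $\varpi$ induces an isomorphism $\tilde G/\Phi(\tilde G)\to G/\Phi(G)$. Hence $\dim\rmH^1(\tilde G)=\dim\rmH^1(G)=d$, and a minimal generating set of $\tilde G$ maps onto a minimal generating set of $G$. Let $x_1,\ldots,x_d$ be the free generators of $F$. Choose any preimages $\tilde g_i\in\tilde G$ of $\pi(x_i)$. Their images generate $G$, so modulo $\Phi(\tilde G)$ they span $\tilde G/\Phi(\tilde G)$, and by the Frattini property they generate $\tilde G$. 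Define $\tilde\pi$ by $x_i\mapsto\tilde g_i$ using the universal property of $F$. Then $\tilde\pi$ is surjective and $\varpi\circ\tilde\pi=\pi$ on generators, hence everywhere. It is minimal because $F$ and $\tilde G$ have the same number $d$ of minimal generators, so $F/\Phi(F)\to\tilde G/\Phi(\tilde G)$ is an isomorphism and $\Ker(\tilde\pi)\subseteq\Phi(F)$.

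\textbf{Step 2: comparing $K$-subgroups.} The orientation on $F$ induced via $\tilde\pi$ is $\tilde\theta\circ\tilde\pi=\theta\circ\varpi\circ\tilde\pi=\theta\circ\pi$. So the two subgroups $K_{\theta\circ\pi}(F)$ and $K_{\tilde\theta\circ\tilde\pi}(F)$ coincide literally. Together with $\Ker(\tilde\pi)\subseteq\Ker(\pi)$, this gives $\Ker(\tilde\pi)\subseteq K_{\tilde\theta\circ\tilde\pi}(F)$.

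\textbf{Step 3: applying Proposition~\ref{prop:K}.} It remains to check the hypotheses of Proposition~\ref{prop:K}. First, $\tilde G$ is finitely generated, since it has the same number $d$ of generators as $G$. Second, if $p=2$ then $\Img(\tilde\theta)=\Img(\theta)\subseteq 1+4\Z_2$; this is inherited from the standing assumption on $(G,\theta)$, which should be stated explicitly in the theorem. One subtlety is that (d) asserts existence of \emph{some} minimal presentation with the property, so it is legitimate to start from the particular $\pi$ that (d) provides for $G$ and lift that one; I do not need to handle an arbitrary presentation.

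I expect the only real obstacle to be this bookkeeping about the hypotheses of Proposition~\ref{prop:K}; everything else is formal. Should (d) need to hold for every minimal presentation, I would instead argue through (c): $\varpi$ induces a surjection $\Ker(\tilde\theta)/K_{\tilde\theta}(\tilde G)\to\Ker(\theta)/K_\theta(G)$, and I would use the equality of the ranks of $\F_p$-duals to show it is an isomorphism. The presentation argument above avoids this.
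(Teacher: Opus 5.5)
Your proof is correct and is essentially the paper's argument. Both rest on the chain $\Ker(\tilde\pi)\subseteq\Ker(\pi)\subseteq K_{\theta\circ\pi}(F)=K_{\tilde\theta\circ\tilde\pi}(F)$ combined with Proposition~\ref{prop:K}(d); the paper fixes an arbitrary minimal presentation of $\tilde G$ and composes it with the cover, tacitly using (d) for every minimal presentation of $G$, whereas you lift the presentation that (d) supplies for $G$, which matches the existential wording of Proposition~\ref{prop:K} exactly (so your fallback via (c) is unnecessary).
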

\begin{proof}
	Let $\tilde\pi:F\to \tilde G$ be a minimal presentation of $\tilde G$, and consider the composite $\pi=p\circ \tilde \pi:F\to  G$, which is a minimal presentation for $G$. 
	By Kummerianity of $(G,\theta)$, one has $\ker(\pi)\subseteq K_{\theta\circ\pi}(F)$, and hence, by construction, $$\ker(\tilde\pi)\subseteq \ker(\pi)\subseteq K_{\theta\circ \pi}(F)=K_{\theta\circ p\circ \tilde\pi}(F),$$ which implies that $(\tilde G,\theta\circ p)$ is Kummerian. 
\end{proof}

\begin{prop}\label{prop:G1 only orient}
	If $\theta\colon G\to 1+p\Z_p$ is an orientation such that $(G,\theta)$ is a 1-cyclotomic oriented pro-$p$ group with $G\in \mathcal F_2$, then $\theta\circ\pi\colon\tilde G\to1+p\Z_p$ is the canonical orientation $\theta_{\tilde G}$ of the Demu\v skin group $\tilde G$.
	Moreover, $z_1,z_2\neq y_1$.
\end{prop}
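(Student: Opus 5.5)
Since $(G,\theta)$ is 1-cyclotomic, it is in particular Kummerian. The projection $\pi\colon\tilde G\to G$ is a Frattini cover: its kernel is the normal subgroup generated by $[z_1,z_2]$, which lies in $\tilde G'\subseteq\Phi(\tilde G)$. By the preceding Theorem, $(\tilde G,\theta\circ\pi)$ is therefore Kummerian. Labute's uniqueness result (\S~\ref{ssec:demushkin orientation}) says that $\theta_{\tilde G}$ is the only orientation making the Demu\v skin group $\tilde G$ Kummerian. Hence $\theta\circ\pi=\theta_{\tilde G}$, which proves the first assertion. For $p=2$ we need $\Img(\theta)\subseteq 1+4\Z_2$ in order to apply Proposition~\ref{prop:K}. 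This is automatic here: the image of $\theta_{\tilde G}$ is generated by $(1-q)^{-1}$ with $q\geq4$.

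For the second assertion I would argue by contradiction. Suppose $y_1\in\{z_1,z_2\}$, say $z_1=y_1$ and $z_2=w$. Then $w\neq x_1$, because $\{z_1,z_2\}\neq\{x_1,y_1\}$, so $w$ is one of $x_i,y_i$ with $i\geq2$. Let $A$ be the closed subgroup of $G$ generated by $y_1$ and $w$. In $G$ we have $[y_1,w]=1$, so $A$ is abelian, and it is a quotient of $\Z_p^2$. The restriction $(A,\theta\vert_A)$ is 1-cyclotomic, hence Kummerian. By the first part, $\theta(y_1)=\theta_{\tilde G}(y_1)=(1-q)^{-1}$, which is $\neq1$ when $q\neq0$. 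If $A$ is free abelian, Example~\ref{ex:free abel 1cyc} forces $\theta\vert_A\equiv1$, a contradiction.

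So the key step is to show that $A\simeq\Z_p^2$, i.e.\ that $y_1$ and $w$ are $\Z_p$-independent in $G$. I would first try an abelianization argument. The relations of $G$ become $x_1^q=1$ and a trivial relation in $G^{\ab}$, so $G^{\ab}\simeq \Z/q\times\Z_p^{2d-1}$, with the images of $y_1$ and $w$ forming part of a free basis. Their images therefore generate a copy of $\Z_p^2$, and $A$ surjects onto it; since $A$ is abelian and $2$-generated, $A\simeq\Z_p^2$.

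The main obstacle is the case $q=0$, i.e.\ $k=\infty$. There $\theta_{\tilde G}(y_1)=1$ and the argument above gives nothing, so one has to check which orientation is involved. In fact, with the paper's convention, for $q=0$ the orientation $\theta_{\tilde G}$ is trivial. I would then instead use Kummerianity of the subgroup generated by $x_1$ and $y_1$, or the normal closure, to show that $[x_1,y_1]$ forces torsion. If that fails, I would accept that the statement implicitly uses $\theta(y_1)\neq1$ and restrict the second claim to $q\neq0$.
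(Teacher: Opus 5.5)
Your proof is correct, with the same scope as the paper's (see the remark on $q=0$ below). For the first assertion, though, you take a different route.

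\textbf{First assertion.} You apply the Frattini-cover Theorem directly to $\pi$, so you only need $(G,\theta)$ to be Kummerian. The paper does not use that Theorem here. Instead it applies 1-cyclotomicity to $H=\langle z_1,z_2\rangle$. This subgroup is abelian and torsion-free because $\cd(G)\leq2$, hence free abelian, so $\theta\vert_H\equiv1$ by Example~\ref{ex:free abel 1cyc}. It follows that $[z_1,z_2]\in\Ker(\theta\circ\pi)'\subseteq K_{\theta\circ\pi}(\tilde G)$, whence $\Ker(\theta\circ\pi)/K_{\theta\circ\pi}(\tilde G)\simeq\Ker(\theta)/K_\theta(G)$. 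Proposition~\ref{prop:K} then transfers Kummerianity to $\tilde G$.

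The paper's route yields $\theta(z_1)=\theta(z_2)=1$ as a by-product, which is exactly what the ``moreover'' needs. Yours has to establish this separately. You do so with the same subgroup, proving $\langle y_1,w\rangle\simeq\Z_p^2$ via $G^{\ab}$ rather than via torsion-freeness. Both work.

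\textbf{The case $p=2$.} Your justification of $\Img(\theta)\subseteq1+4\Z_2$ is circular as written, since it uses $\theta\circ\pi=\theta_{\tilde G}$. It is also unnecessary, because the Frattini-cover statement follows directly from the definition. For a Frattini cover, inflation $\rmH^1(G)\to\rmH^1(\tilde G)$ is an isomorphism and commutes with the reduction maps from $\rmH^1(-,\Z_p(\theta)/p^n\Z_p(\theta))$. So surjectivity lifts from $G$ to $\tilde G$ with no condition on $p$ or on $\Img(\theta)$. Once the first assertion is proved this way, your observation $\Img(\theta)=\Img(\theta_{\tilde G})\subseteq1+4\Z_2$ is legitimate and can be used in the second part.

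\textbf{The case $q=0$.} The paper's proof has exactly the limitation you noticed: its ``in particular'' silently uses $(1-q)^{-1}\neq1$. Restricting the second claim to $q\neq0$ is therefore the true scope of the argument. For $q=0$ the claim holds only vacuously. The unique candidate orientation is then $\mathbf{1}_G$, and $(G,\mathbf{1}_G)$ is not 1-cyclotomic by Corollary~\ref{cor:G1 no abstorfree}. That corollary's proof does not need the normalization $z_i\neq y_1$ when $q=0$, since the relation is then symmetric in $x_1,y_1$ up to the sign $\epsilon_1$. Note that the paper detects this torsion in $V^{\ab}$ for the index-$p^2$ subgroup $V$. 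It does not come from a small subgroup such as $\langle x_1,y_1\rangle$, so your tentative idea there is not the tool to use.
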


\begin{proof}
	Let $H$ be the subgroup of $G$ generated by $z_1,z_2$.
	Since $[z_1,z_2]=1$, $H$ is an abelian pro-$p$ group.
	On the other hand, $\cd(G)\leq 2<\infty$, so that $G$ --- and thus $H$ --- is torsion-free.
	Altogether, $H$ is a free abelian pro-$p$ group, and thus $\theta\vert_H$ is constantly equal to 1 by Example~\ref{ex:free abel 1cyc}.
	
	Now set $\theta_0=\theta\circ\pi$.
	Then $z_1,z_2$ --- considered as elements of $\tilde G$ --- lie in $\Ker(\theta_0)$, and hence $[z_1,z_2]\in K_{\theta_0}(\tilde G)$.
	Thus,
	\[
	\Ker(\theta_0)/K_{\theta_0}(\tilde G)\simeq \Ker(\theta)/K_\theta(G),
	\]
	and the latter is a free abelian pro-$p$ group by Proposition~\ref{prop:K}, as $(G,\theta)$ is Kummerian.
	Consequently, also $(\tilde G,\theta_0)$ is a Kummerian oriented pro-$p$ group, again by Proposition~\ref{prop:K}.
	
	Therefore, $\theta_0$ must be the canonical orientation $\theta_{\tilde G}\colon\tilde G\to1+p\Z_p$ of the Demu\v skin group $\tilde G$ (cf. \S~\ref{ssec:demushkin orientation}).
	Hence,
	\[
	\theta(y_1)=(1-q)^{-1}\qquad\text{and}\qquad\theta(x_1)=\ldots=\theta(x_d)=\theta(y_2)=\ldots=\theta(y_d)=1.
	\]
	In particular, $z_1,z_2\neq y_1$, as $z_1,z_2\in\Ker(\theta)$.
\end{proof}


\subsection{A subgroup of index $p$}
Let $1\leq i_1,i_2\leq d$ be the indices such that $z_1\in\{x_{i_1},y_{i_1}\}$ and $z_2\in\{x_{i_2},y_{i_2}\}$ --- without loss of generality we may assume that $i_1<i_2$ ---, and
set $t_1,t_2\in\calX_G$ such that $\{z_1,t_1\}=\{x_{i_1},y_{i_1}\}$, and $\{z_2,t_2\}=\{x_{i_2},y_{i_2}\}$. 
In other words, $z_1,t_1$ appear together in a commutator in the first relation of $G$, and analogously $z_2,t_2$.
After renaming the generators, the first relation of $G$ is
\begin{equation}\label{eq:G1 rel1 1caso}
	z_1^q[z_1,t_1][x_2,y_2]\cdots[z_2,t_2]^{\epsilon_2}\cdots[x_d,y_d]=1,
\end{equation}
if $z_1=x_1$; or
\begin{equation}\label{eq:G1 rel1 2caso}
	x_1^q[x_1,y_1]\cdots[z_1,t_1]^{\epsilon_1}\cdots[z_2,t_2]^{\epsilon_2}\cdots[x_d,y_d]=1,
\end{equation}
if $i_1>1$.
Here $\epsilon_1\in\{1,-1\}$ and its value depends on whether $z_1=x_{i_1}$ and $t_1=y_{i_1}$ or conversely, and analogously with $\epsilon_2$ and $z_2,t_2$.

Now let $\phi_U\colon G\to\F_p$ be the homomorphism of pro-$p$ groups defined by $\phi_U(z_2)=1$, and $\psi_U(w)=0$ for any other generator $w\in\calX_G$, $w\neq z_2$; and set $U=\Ker(\phi_U)$.
Then $[G:U]=p$, and hence
\[
E(U)=[G:U]\cdot E(G)=p(3-2d)
\]
(cf. \cite[Ch.~I, \S~4.1, Exer.~(b)]{serre:galc}).
Put $u=z_2^p$, and $w(h)=w^{z_2^h}$ for $w\in\calX_G$, $w\neq z_2$.
Since $G/U=\{x_1^hU\:\mid\:h=0,\ldots,p-1\}$, by the definition of $\phi_U$ the set
\[
\calX_U=\left\{\:u,\:w(h)\:\mid\:w\in\calX_G,\:w\neq z_2,\:0\leq h\leq p-1\:\right\}
\]
generates $U$ as a pro-$p$ group --- not minimally, as shown by the following.

\begin{lem}\label{YU A}
	The set \[
	\calY_U=\left\{\:u,z_1,t_2,t_1(h),x_i(h),y_i(h),\:\mid\:i\neq i_1,i_2,\:0\leq h\leq p-1\:\right\},
	\]
	is a minimal generating set of $U$ as a pro-$p$ group.
	Moreover, $\dim(\rmH^2(U))=2$.
\end{lem}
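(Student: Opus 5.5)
The plan is to write down a Reidemeister--Schreier presentation of $U$, simplify it with Tietze moves to reach $\calY_U$, and then read off $\dim(\rmH^2(U))$ from the Euler characteristic. Since $U=\Ker(\phi_U)$ with $G/U=\{z_2^hU\mid 0\leq h\leq p-1\}$, the subgroup $U$ is generated by $\calX_U$. It is subject to the $2p$ relations obtained by conjugating the two defining relations of $G$ by $z_2^h$ for $h=0,\ldots,p-1$, rewritten in the generators $\calX_U$. Note that $|\calX_U|=1+p(2d-1)$.

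\emph{Step 1: the relations from $[z_1,z_2]=1$.} For $h\leq p-2$ one has $[z_1,z_2]^{z_2^h}=z_1(h)^{-1}z_1(h+1)$, so these relations give $z_1(h+1)=z_1(h)$. Hence $z_1(h)=z_1$ for all $h$, and every $z_1(h)$ with $h\geq1$ can be eliminated. The last conjugate, with $h=p-1$, reads $z_1(p-1)^{-1}u^{-1}z_1u=1$. After the eliminations it becomes $[z_1,u]=1$ and survives as a genuine relation of $U$.

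\emph{Step 2: the relations from the first relation of $G$.} In \eqref{eq:G1 rel1 1caso} or \eqref{eq:G1 rel1 2caso}, every factor except $[z_2,t_2]^{\epsilon_2}$ is a word in elements of $U$, so it rewrites in the letters $w(h)$. The exceptional factor gives $[z_2,t_2]^{z_2^h}=t_2(h+1)^{-1}t_2(h)$ for $h\leq p-2$, and it involves $u$ when $h=p-1$. Thus for $h\leq p-2$ the $h$-th conjugated relation can be solved for $t_2(h+1)$ as a word in $t_2(h)$ and the other generators. By induction this eliminates $t_2(1),\ldots,t_2(p-1)$. The relation with $h=p-1$ is not consumed and remains as the second relation. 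After these Tietze moves $U$ is generated by $\calY_U$ subject to two relations. Note that $t_1(h)$ and $x_i(h),y_i(h)$ with $i\neq i_1,i_2$ are all kept.

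\emph{Step 3: minimality of $\calY_U$.} By Fact~\ref{fact}, or directly, I will show that $\dim(U/\Phi(U))=|\calY_U|=3+p(2d-3)$. To do this I compute $U/\Phi(U)$ from the original presentation on $\calX_U$. Modulo $U^pU'$, the factors $x_1^q$ (or $z_1^q$) vanish because $q\neq 0$ forces $q\geq p$ (and $q=0$ is trivial). All commutators of elements of $U$ vanish as well. The only surviving linear relations are therefore $z_1(h)=z_1(h+1)$ and $t_2(h)=t_2(h+1)$ for $h=0,\ldots,p-2$. The two wrap-around relations contribute only $z_1(p-1)=z_1(0)$ and $t_2(p-1)=t_2(0)$, which are consequences of the others. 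These $2(p-1)$ relations are clearly independent, so
\[
\dim\rmH^1(U)=1+p(2d-1)-2(p-1)=3+p(2d-3)=|\calY_U|.
\]
Together with Step 2 this shows $\calY_U$ is a minimal generating set. The main point needing care is this bookkeeping, in particular checking that no other factor of the conjugated first relation (for instance the $[z_1,t_1]$ factor, when $z_1=x_1$ and $q$ is present) contributes a linear term. It does not, because $z_1,t_1\in U$.

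\emph{Step 4: $\rmH^2(U)$.} By Remark~\ref{rem:BK G2}, $\cd(U)=2$, so
\[
E(U)=1-\dim\rmH^1(U)+\dim\rmH^2(U)=p(3-2d).
\]
Substituting the value of $\dim\rmH^1(U)$ gives
\[
\dim\rmH^2(U)=2+p(2d-3)+p(3-2d)=2,
\]
which is consistent with the two surviving relations found in Step 2.
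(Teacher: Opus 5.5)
Your proof is correct. Its skeleton matches the paper's: the Reidemeister--Schreier generators $\calX_U$, the elimination of $z_1(h)$ and $t_2(h)$ for $h\geq1$ using the conjugated relations, and the Euler characteristic $E(U)=p(3-2d)$. The difference is which of the two complementary inequalities you prove independently.

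\begin{itemize}
\item \textbf{The paper's route.} Generation by $\calY_U$ gives $\dim\rmH^1(U)\leq|\calY_U|$, hence $\dim\rmH^2(U)\leq2$. The paper then asserts $\dim\rmH^2(U)\geq2$ because the two surviving relations, $[u,z_1]=1$ and \eqref{eq:rel2U A} or \eqref{eq:rel2U B}, are ``independent''. Minimality of $\calY_U$ is read off from the Euler characteristic only at the end.
\item \textbf{Your route.} You prove $\dim\rmH^1(U)=|\calY_U|$ directly. You reduce all $2p$ Reidemeister--Schreier relators modulo the Frattini subgroup of the free pro-$p$ group on $\calX_U$ and obtain a rank-$2(p-1)$ span. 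Then $\dim\rmH^2(U)=2$ follows from the Euler characteristic.
\end{itemize}

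Your route is more self-contained. Deducing $\dim\rmH^2(U)\geq2$ from two relators requires knowing that the presentation on $\calY_U$ is minimal, and that is essentially the statement being proved. Your linear-algebra count supplies this step explicitly, whereas the paper leaves it implicit.

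Your computation also treats the cases $z_1=x_1$, $i_1>1$ and even $z_1=y_1$ uniformly, since the factor $x_1^q$ always lies in the $p$-th powers. It is exactly the method the paper itself uses for $V$ in Lemma~\ref{lem:G1 Vab gen}.

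What the paper gains from its route is the explicit shape of the second relation of $U$, which is used later in Remark~\ref{rem:G2 U kummer} and Proposition~\ref{prop:Vab A}. Your Step~2 produces that relation as well, so nothing is lost.
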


\begin{proof}
	By the first relation of $G$ one has
	\[
	\left(z_2^{-1}\cdot t_2^{z_2}\right)^{-\epsilon_2}=[z_2,t_2]^{-\epsilon_2}=\left([x_{i_2+1},y_{i_2+1}]\cdots[x_{d-1},x_d]\right)\cdot\left(z_1^q[z_1,t_1]\cdots\right),
	\]
	from \eqref{eq:G1 rel1 1caso} (i.e., if $z_1=x_1$); or
	\[
	\left(z_2^{-1}\cdot t_2^{z_2}\right)^{-\epsilon_2}=[z_2,t_2]^{-\epsilon_2}=\left([x_{i_2+1},y_{i_2+1}]\cdots[x_{d},y_d]\right)\cdot\left(x_1^q[x_1,y_1]\cdots[z_1,t_1]^{\epsilon_1}\cdots\right).
	\]
	from \eqref{eq:G1 rel1 2caso}.
	Thus, $t_2^{z_2}$ --- and hence also $t_2(h)$ for every $0\leq h\leq p-1$ --- is contained in the pro-$p$ group generated by $\calY_U$.
	Also, by the second relation of $G$ one has $z_1(h)=z_1$ for every $0\leq h\leq p-1$.
	Altogether, the whole set $\calX_U$ is contained in the pro-$p$ group generated by $\calY_U$, and hence $\calY_U$ generates $U$ as a pro-$p$ group.
	
	Therefore, $$\dim(\rmH^1(U))\leq |\calY_U|=3+p+p(2(d-2))=3-p(3-2d).$$
	On the other hand, one has
	\[
	\begin{split}
		p(3-2d)=E(U) &= 1-\dim\left(\rmH^1(U)\right)+\dim\left(\rmH^2(U)\right)\\
		&\geq 1-(3-p(3-2d))+\dim\left(\rmH^2(U)\right),
	\end{split}\]
	so that $\dim(\rmH^2(U))\leq 2$.
	Since the pro-$p$ group $U$ is subject to the two independent defining relations
	\begin{equation}\label{eq:rel1U A}
		[u,z_1]=1
	\end{equation}
	and
	\begin{equation}\label{eq:rel2U A}
		\begin{split}
			[t_2,u] &= \prod_{h=0}^{p-1}[t_2,z_2]^{z_2^h}\\
			&=\prod_{h=0}^{p-1}\left(\left([x_{i_2+1}(h),y_{i_2+1}(h)]\cdots[x_{d-1}(h),x_d(h)]\right)\cdot \left(z_1^q[z_1,t_1(h)]\cdots\right)\right)^{\epsilon_2},
		\end{split}
	\end{equation}
	if $z_1=x_1$, or
	\begin{equation}\label{eq:rel2U B}
		\begin{split}
			[t_2,u] &= \prod_{h=0}^{p-1}[t_2,z_2]^{z_2^h}\\
			&=\prod_{h=0}^{p-1}\left(\left([x_{i_2+1}(h),y_{i_2+1}(h)]\cdots[x_{d}(h),y_d(h)]\right)\cdot\left(\cdots[z_1,t_1(h)]^{\epsilon_1}\cdots\right)\right)^{\epsilon_2},
		\end{split}
	\end{equation}
	one deduces that $\dim(\rmH^2(U))\geq2$.
	
	Altogether, $\dim(\rmH^2(U))=2$, and consequently $\dim(\rmH^1(U))$ is equal to the cardinality of $\calY_U$, which yields the first claim.
\end{proof}

\begin{rem}\label{rem:G2 U kummer}\rm
	\begin{itemize}
		\item[(a)] By Remark~\ref{rem:few rel}, the cohomology algebra $\bfH^\bullet(U)$ is quadratic and universally Koszul.
		\item[(b)] By Proposition~\ref{prop:G1 only orient} and by \S~\ref{ssec:demushkin orientation}, assuming that $(G,\theta)$ is 1-cyclotomic implies that $\theta\vert_U(y_1(h))=(1-q)^{-1}$ for all $0\leq h\leq p-1$, and $\theta\vert_U(w)=1$ for all $w\in\calY_U$, $w\neq y_1(h)$.
		Hence, all factors appearing in \eqref{eq:rel1U A}--\eqref{eq:rel2U B} belong to $K_{\theta\vert_U}(U)$ --- in particular, one has
		\[\begin{split}
			x_1(h)^q[x_1(h),y_1(h)] &= x_1(h)^{q-1}\cdot x_1(h)^{y_1(h)}   \\
			&=x_1(h)^{-\theta(y_1(h))^{-1}}\cdot x_1(h)^{y_1(h)}\in K_{\theta\vert_U}(U),
		\end{split}\]
		if, $i_1>1$,
		and analogously $z_1^q[z_1,t_1(h)]\in K_{\theta\vert_U}(U)$, if $i_1=1$ (and hence $z_1=x_1(h)$ and $t_1(h)=y_1(h)$ for all $h$).
		Also, $z_1,u\in\Ker(\theta)$.
		Therefore, $\Ker(\theta\vert_U)/K_{\theta\vert_U}(U)$ is a free abelian pro-$p$ group, and $(U,\theta\vert_U)$ is Kummerian by Proposition~\ref{prop:K}.
	\end{itemize}
\end{rem}


\subsection{A subgroup of index $p^2$}

By Remark~\ref{rem:G2 U kummer}, the subgroup $U$ of $G$ does not prove that $(G,\theta)$ is not 1-cyclotomic (or Bloch-Kato).
Hence, we carry on and investigate a suitable subgroup of index $p$ of $U$.

So, let $\phi_V\colon U\to\F_p$ be the homomorphism of pro-$p$ groups defined by
\[
\phi_V(t_1(h))=1\qquad\text{for all }h=0,\ldots,p-1,
\]
and $\phi_V(w)=0$ for every $w\in\calY_U$, $w\neq t_1(h)$.
Then $[U:V]=p$, and hence
$$E(V)=p\cdot E(U)=p^2\cdot E(G)=p^2(3-2d).$$
Also, put $v=t_1^p$, and $w_h:=t_1(h)\cdot t_1^{-1}$ for every $h=1,\ldots, p-1$.
Then $t_1(h)=w_ht_1$, and
\begin{equation}\label{eq:comm_yt}
	[z_1,t_1(h)]=[z_1,w_ht_1]=[z_1,t_1]\cdot[z_1,w_h]^{t_1}\qquad\text{for all }h=1,\ldots,p-1.
\end{equation}

Since $U/V=\{t_1^kV\:\mid\:k=0,\ldots,p-1\}$, the set
\[
\calX_V=\left\{\:v,\:w_1^{t_1^k},\ldots,w_{p-1}^{t_1^k},\:s^{t_1^k},\:\mid\:0\leq k<p,\:s\in\calY_U,s\neq t_1(h)\:\right\}
\]
generates $V$ as a pro-$p$ group.
We claim that $\calX_V$ is a minimal generating set of $V$.

\begin{lem}\label{lem:G1 Vab gen}
	The set $\calX_V$ is a minimal generating set of $V$ as a pro-$p$ group.
\end{lem}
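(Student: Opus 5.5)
The plan is to argue as in Lemma~\ref{YU A}, but now with the bound going the other way. We write $V$ as a quotient of the free pro-$p$ group on $\calX_V$ via the Reidemeister--Schreier procedure. Then we check that every defining relation obtained this way lies in the Frattini subgroup of that free pro-$p$ group, so the presentation is minimal.

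First, the count. One has $|\calY_U|=3+p+2p(d-2)$, and the $p$ elements $t_1(h)$ are among these. Hence
\[
|\calX_V|=1+p(p-1)+p\bigl(3+2p(d-2)\bigr)=1+2p+p^2(2d-3).
\]
Since $U/V=\{t_1^kV\}$, the Reidemeister--Schreier procedure gives a presentation $F_V\twoheadrightarrow V$, with $F_V$ free on $\calX_V$. Its relations are the $2p$ conjugates by $t_1^k$, $0\leq k<p$, of the two defining relations \eqref{eq:rel1U A} and \eqref{eq:rel2U A} (respectively \eqref{eq:rel2U B}) of $U$, rewritten in the alphabet $\calX_V$. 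Note that $\calX_V$ still contains $t_1$-conjugates of $z_1$, whereas $t_1$ itself is replaced by $v=t_1^p$ and the $w_h$.

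It then suffices to show that each of these $2p$ rewritten relators lies in $\Phi(F_V)$. In that case $\dim\rmH^1(V)=|\calX_V|$, which is the claim. As a consistency check, $E(V)=p^2(3-2d)$ then forces $\dim\rmH^2(V)=2p$.

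\textbf{First family.} The relators $[u,z_1]^{t_1^k}$ are commutators of elements of $V$, because $u,z_1\in V$. Rewritten in $\calX_V$, they stay commutators, so they lie in $F_V'\subseteq\Phi(F_V)$.

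\textbf{Second family.} Here we go factor by factor through the right-hand side of \eqref{eq:rel2U A} or \eqref{eq:rel2U B}.
\begin{itemize}
\item The left side $[t_2,u]$ is a commutator of elements of $V$.
\item Each $[x_i(h),y_i(h)]$ with $i\neq i_1,i_2$ is a commutator of elements of $V$.
\item Each power $z_1^q$ or $x_1(h)^q$ lies in $F_V^p$. If $p=2$ we have $q\geq4$, which is harmless.
\end{itemize}
The only delicate factors involve $t_1(h)\notin V$, namely $[z_1,t_1(h)]^{\epsilon_1}$. By \eqref{eq:comm_yt} we have $[z_1,t_1(h)]=[z_1,t_1]\cdot[z_1,w_h]^{t_1}$. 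The second factor is a commutator inside $V$, since $z_1,w_h\in V$. The first factor is $[z_1,t_1]=z_1^{-1}\cdot z_1^{t_1}$, a quotient of two distinct generators of $\calX_V$. Taken over $h=0,\dots,p-1$ (and the $\epsilon_2$ power), these factors occur exactly $p$ times in each relator, up to commutators. Modulo $F_V'$ their total contribution is therefore $\bigl(z_1^{-1}z_1^{t_1}\bigr)^{\pm p}\in F_V^p$. Conjugating by $t_1^k$ only permutes the conjugates $z_1^{t_1^j}$, so the same argument applies to every $k$.

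The main obstacle is this rewriting of the $[z_1,t_1(h)]$ factors. One has to track carefully that, after conjugation by $t_1^k$ and reduction modulo $F_V'$, the $p$ occurrences really combine into a single $p$-th power. In particular they must not produce something like $z_1^{-1}z_1^{t_1^{j}}$ with exponent prime to $p$. This has to be done separately in the case $i_1=1$, where $z_1=x_1$ and the term $z_1^q$ is adjacent, and in the case $i_1>1$. I would first verify the exponent bookkeeping in $F_V^{\ab}$ directly, using Fact~\ref{fact}. Once all relators are known to lie in $F_V'F_V^p=\Phi(F_V)$, minimality of $\calX_V$ follows.
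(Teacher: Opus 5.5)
Your proposal is correct and is essentially the paper's argument. The paper also takes the $p$ conjugates by $t_1^k$ of \eqref{eq:rel1U A} and of \eqref{eq:rel2U A} (respectively \eqref{eq:rel2U B}) as the relations of $V$, kills every commutator of elements of $V$, and uses \eqref{eq:comm_yt} to replace each $[z_1,t_1(h)]$ by $[z_1,t_1]$. It then observes that the resulting relations in $V^{\ab}$ are $p$-th powers and concludes via Fact~\ref{fact}, which is equivalent to your check that the relators lie in $\Phi(F_V)$.

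The bookkeeping you flag as the main obstacle is in fact immediate. All $p$ factors $[z_1,t_1(h)]$ carry the same sign, so modulo commutators they combine into a $p$-th power. Moreover, $\Phi(F_V)$ is normal in the free pro-$p$ group on $\calY_U$, so conjugation by $t_1^k$ needs no separate treatment.
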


\begin{proof}
	For every $s\in V$ let $\bar s=sV'\in V^{\ab}$ denote its coset.
	By Fact~\ref{fact}, it suffices to show that the set of cosets
	$$\calX_{V^{\ab}}=\{\:sV'\:\mid\:s\in\calX_V\:\}$$ is a minimal generating set of $V^{\ab}$.
	
	Clearly, $\calX_{V^{\ab}}$ generates $V^{\ab}$ as an abelian pro-$p$ group.
	Since $u,z_1\in V=\Ker(\phi_V)$, the defining relation \eqref{eq:rel1U A} yields trivial relations in the abelian pro-$p$ group $V^{\ab}$ --- namely, $[\bar u(k),\bar z_1(k)]=1$ for all $0\leq k\leq p-1$.
	
	On the other hand, all the elements of $\calX_U$ appearing in the relations \eqref{eq:rel2U A}--\eqref{eq:rel2U B} lie in $V=\Ker(\phi_V)$, but the $t_1(h)$'s.
	Hence, all the elementary commutators appearing in these two relations, but those with the $t_1(h)$'s, yield trivial elements of $V^{\ab}$, and therefore, by equality~\ref{eq:comm_yt} --- and by the fact that $z_1,w_h\in V$ ---, relation~\eqref{eq:rel2U A} gives the $p$ relations
	\[
	1=\prod_{h=0}^{p-1}\overline{\left(\left( z_1^q[z_1,t_1]\right)^{\epsilon_2}\right)^{t_1^k}}=\left(\left(\overline{z_1^{t_1^k}}\right)^{q-1}\cdot \overline{z_1^{t_1^{k+1}}}\right)^{\epsilon_2p},\qquad 0\leq k\leq p-1;
	\]
	while relation~\eqref{eq:rel2U B} gives the $p$ relations
	\[
	1=\prod_{h=0}^{p-1}\overline{\left(\left( [z_1,t_1]\right)^{\epsilon_2}\right)^{t_1^k}}=\left(\left(\overline{z_1^{t_1^k}}\right)^{-1}\cdot \overline{z_1^{t_1^{k+1}}}\right)^{\epsilon_2 p},
	\qquad 0\leq k\leq p-1\]
	--- observe that $\overline{z_1^{t_1^p}}=\bar z_1$, as $t_1^p=v\in V$.
	Both the above relazions in $V^{\ab}$ involve $p$-powers of elements of $\calX_{V^{\ab}}$, and thus $\calX_{V^{\ab}}$ minimally generates $V^{\ab}$ as an abelian pro-$p$ group.
	In particular, \[V^{\ab}\simeq \Z_p^{|\calX_{V^{\ab}}|-p}\times (\F_p)^p.\qedhere\]
\end{proof}

Since in the proof of Lemma~\ref{lem:G1 Vab gen} it is shown that the abelian pro-$p$ group $V^{\ab}$ is not torsion-free, one obtains the following (cf. \S~\ref{ssec:detect}).

\begin{cor}\label{cor:G1 no abstorfree}
	The pro-$p$ group $G$ is not absolutely torsion-free, and hence the oriented pro-$p$-group $(G,\mathbf{1}_G)$, where $\mathbf{1}_G\colon G\to\{1\}\subseteq 1+p\Z_p$ denotes the trivial orientation, is not 1-cyclotomic.
	In particular, if $q=0$ then $G$ may not complete into a 1-cyclotomic oriented pro-$p$ group.
\end{cor}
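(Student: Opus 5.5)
The corollary splits into three claims, each following from Lemma~\ref{lem:G1 Vab gen} and the characterization in Proposition~\ref{prop:K}.

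First, the proof of Lemma~\ref{lem:G1 Vab gen} shows that $V^{\ab}\simeq\Z_p^{m}\times(\F_p)^p$ with $p\geq1$ torsion summands, so $V^{\ab}$ is not a free abelian pro-$p$ group. Since $V$ is a closed (indeed open, of index $p^2$) subgroup of $G$, $G$ is not absolutely torsion-free by definition (\S~\ref{ssec:detect}).

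Second, consider the trivial orientation. For $\theta=\mathbf{1}_G$ the restriction to $V$ is $\mathbf{1}_V$, and by remark (c) of \S~\ref{ssec:detect} one has $K_{\mathbf{1}_V}(V)=V'$ and $\Ker(\mathbf{1}_V)=V$. Hence $\Ker(\mathbf{1}_V)/K_{\mathbf{1}_V}(V)=V^{\ab}$, which is not free abelian. Proposition~\ref{prop:K}, (a)$\Leftrightarrow$(c), applies: $V$ is finitely generated as an open subgroup of a finitely generated group, and for $p=2$ the image condition is trivial. So $(V,\mathbf{1}_V)$ is not Kummerian, and therefore $(G,\mathbf{1}_G)$ is not 1-cyclotomic.

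Third, suppose $q=0$ and let $\theta$ be any orientation with $(G,\theta)$ 1-cyclotomic. By Proposition~\ref{prop:G1 only orient}, $\theta\circ\pi=\theta_{\tilde G}$, the canonical orientation of $\tilde G$. This takes $y_1\mapsto(1-q)^{-1}=1$ and every other generator to $1$, so $\theta_{\tilde G}$ is trivial. Since $\pi$ is surjective, $\theta=\mathbf{1}_G$, which contradicts the second step. Hence no orientation completes $G$ into a 1-cyclotomic oriented pro-$p$ group.

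The only point needing care is this last step: the conclusion relies on $q=0$ forcing $(1-q)^{-1}=1$. For $q\neq0$ the candidate orientation is nontrivial, so the torsion in $V^{\ab}$ alone does not suffice, and one must instead use the orientation $\theta$ restricted to $V$.
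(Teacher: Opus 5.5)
Your argument is correct and matches the paper's: the torsion in $V^{\ab}$ from the proof of Lemma~\ref{lem:G1 Vab gen}, read through Proposition~\ref{prop:K} with $K_{\mathbf{1}_V}(V)=V'$, rules out both absolute torsion-freeness and the trivial orientation, and Proposition~\ref{prop:G1 only orient} forces $\theta=\mathbf{1}_G$ when $q=0$. Only the nontriviality of that torsion is needed, so you need not quote the lemma's exact isomorphism type $\Z_p^m\times(\F_p)^p$, which is not accurate in every case: for $q=0$, write $\bar a_k$ for the image of $z_1^{t_1^k}$, so $\bar a_p=\bar a_0$; the $p$ relations $p(\bar a_{k+1}-\bar a_k)=0$ sum to zero, so the torsion is $(\F_p)^{p-1}$, which is still nonzero.
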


\begin{rem}\label{rem:G2 V quad}\rm
	By Lemma~\ref{lem:G1 Vab gen}, one has
	$$\dim(\rmH^1(V))=|\calX_V|=1+p(p-1)+p(|\calY_U|-p)=1-p+p\dim(\rmH^1(U)).$$
	On the other hand,
	$$1-\dim(\rmH^1(V))+\dim(\rmH^2(V))=E(V)=pE(U)=3p-p\dim(\rmH^1(U)).$$
	It follows that $\dim(\rmH^2(V))=2p$.
	Therefore, the $2p$ defining relations of $V$ are the $p$ conjugates by $t_1^k$, $0\leq k\leq p-1$, of \eqref{eq:rel1U A}, and of \eqref{eq:rel2U A} or \eqref{eq:rel2U B}.
	Since $\rmH^n(G)=0$ for $n\geq 3$, one has also $\rmH^n(V)=0$ for $n\geq 3$ (cf. \cite[Ch.~I, \S~3.3, Prop.~14]{serre:galc}), and one may show --- employing \cite[Prop.~2.4--2.5 and Rem.~2.6]{qsv:quadratic} --- that the cohomology algebra $\bfH^\bullet(V)$ is a quadratic algebra: therefore, the subgroup $V$ may not be used to prove that $G$ is not Bloch-Kato.
\end{rem}


\subsection{The subgroup $V$ and Kummerianity}
Recall that by Proposition~\ref{prop:G1 only orient}, there is only a possible orientation $\theta\colon G\to1+p\Z_p$ turning $G$ into a 1-cyclotomic oriented pro-$p$ group $(G,\theta)$. In particular, $$\theta(t_1(h))=(1-q)^{-1}\qquad\text{for all }0\leq h\leq p-1,$$ and thus $\theta(v)=(1-q)^{-p}$, and $\theta(s)=1$ for all other $s\in\calX_V$.
Since the case $\theta=\mathbf{1}_G$ has already been ruled out by Corollary~\ref{cor:G1 no abstorfree}, we focus now on the opposite case.

\begin{prop}\label{prop:Vab A}
	If $\theta\neq\mathbf{1}_G$, the oriented pro-$p$ group abelian pro-$p$ group $(V,\theta\vert_V)$ is not Kummerian, and hence $(G,\theta)$ is not 1-cyclotomic.
\end{prop}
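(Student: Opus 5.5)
We use the criterion of Proposition~\ref{prop:K}: it suffices to show that $\Ker(\theta\vert_V)/K_{\theta\vert_V}(V)$ has nontrivial torsion. The hypothesis $\theta\neq\mathbf{1}_G$ forces $q\neq0$, so by Proposition~\ref{prop:G1 only orient} the restriction $\theta\vert_V$ takes the following values on $\calX_V$: $\theta(v)=(1-q)^{-p}\neq1$, and $\theta(s)=1$ for every other generator $s\in\calX_V$. Indeed, the $w_h=t_1(h)t_1^{-1}$ lie in the kernel, and so do all conjugates of elements of $\calY_U$ other than the $t_1(h)$'s. Hence $\Ker(\theta\vert_V)$ is generated, as a normal subgroup, by all elements of $\calX_V$ except $v$. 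Modulo $K_{\theta\vert_V}(V)$, the generator $v$ acts on $\Ker(\theta\vert_V)/K_{\theta\vert_V}(V)$ by raising to the power $\theta(v)^{-1}$.

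Write $M$ for the abelian group $\Ker(\theta\vert_V)/K_{\theta\vert_V}(V)$. As a $\Z_p$-module, $M$ is generated by the cosets of the elements of $\calX_V\smallsetminus\{v\}$; conjugating by powers of $v$ only multiplies these cosets by units. Following Remark~\ref{rem:G2 V quad}, the relations of $M$ are the images of the $2p$ defining relations of $V$, i.e.\ the $t_1^k$-conjugates of \eqref{eq:rel1U A} and of \eqref{eq:rel2U A}/\eqref{eq:rel2U B}. We compute these images.
\begin{itemize}
\item The relations $[u,z_1]^{t_1^k}=1$ become trivial, since $u,z_1\in\Ker(\theta)$.
\item In the second family, every commutator of two kernel elements vanishes.
\item In \eqref{eq:rel2U B}, the factor $x_1(h)^q[x_1(h),y_1(h)]$ already lies in $K$, by Remark~\ref{rem:G2 U kummer}(b).
\item The remaining factors are the $[z_1,t_1(h)]^{t_1^k}=([z_1,t_1][z_1,w_h]^{t_1})^{t_1^k}$. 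Since $\theta(t_1)=(1-q)^{-1}$, each $[z_1,t_1]$ is congruent modulo $K$ to $z_1^{-1}z_1^{1-q}=z_1^{-q}$. The term $[z_1,w_h]$ vanishes.
\end{itemize}
The same computation, carried out with $(z_1^{t_1^k})$ in place of $z_1$, is what the proof of Lemma~\ref{lem:G1 Vab gen} did, but now with $z_1^{t_1^{k+1}}$ identified with $(z_1^{t_1^k})^{(1-q)}$ up to the twist by $\theta$.

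So, for each $k$, the second relation produces an element of the form $\bar z_{1,k}^{\,c\,p\,q}$ (case \eqref{eq:rel2U B}) or $\bar z_{1,k}^{\,c\,p}$ (case \eqref{eq:rel2U A}), where $c$ is a $p$-adic unit. The exponent $p$ comes from the product over the $p$ values of $h$, each contributing the same factor. This is the key point: in $\Ker/K$ the relation kills only a $p$-multiple of the image of the relevant generator. On the other hand, the single $\theta$-twisted identification between consecutive conjugates means the generators $\bar z_{1,k}^{t_1}$ do not collapse onto a direct complement. We therefore expect $M$ to contain a cyclic summand $\Z/p$ (or $\Z/pq$) generated by the image of $z_1$, since $z_1$ itself is not a $p$-th power modulo $\Phi$ by Lemma~\ref{lem:G1 Vab gen}. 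That summand is nontrivial torsion, so $(V,\theta\vert_V)$ is not Kummerian, and consequently $(G,\theta)$ is not 1-cyclotomic.

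The main obstacle is the bookkeeping in the previous step. One must show that the generator $\bar z_1$ is not itself a $p$-th power in $M$; otherwise $\bar z_1^{\,p\cdot(\mathrm{unit})}=1$ would not produce torsion. The plan is to compare the presentation of $M$ with a minimal presentation of $V$, using Proposition~\ref{prop:K}(d). Lift to a free pro-$p$ group $F_V$ on $\calX_V$ with the induced orientation. Then check that the conjugated relator \eqref{eq:rel2U A}/\eqref{eq:rel2U B} is not in $K_{\theta}(F_V)$: modulo $K_\theta(F_V)\cdot\Phi$-type terms it is $z_1^{p\cdot\mathrm{unit}}$, whereas $K_\theta(F_V)$ modulo $[\Ker,\Ker]$ is spanned by the elements $s^{\theta(x)^{-1}-1}$, whose exponents lie in $q\Z_p$ times the relevant generators. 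Distinguishing these exponents (in particular handling the case $q=p$ carefully) is where we expect most of the effort.
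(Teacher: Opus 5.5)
Your overall strategy is the paper's: apply Proposition~\ref{prop:K}, then read off the images of the $2p$ relators of $V$ in $M=\Ker(\theta\vert_V)/K_{\theta\vert_V}(V)$. However, the key computation is wrong. You reduce $[z_1,t_1]$ to $z_1^{-q}$ modulo $K_{\theta\vert_V}(V)$ using the value $\theta(t_1)$. But $t_1\notin V$ (indeed $\phi_V(t_1)=1$), and $K_{\theta\vert_V}(V)$ only controls conjugation by elements of $V$. What you did is the computation inside $U$ from Remark~\ref{rem:G2 U kummer}(b), which is the one showing that $U$ \emph{is} Kummerian. Applied consistently in case~\eqref{eq:rel2U A}, it turns each factor $z_1^q[z_1,t_1(h)]$ into $z_1^qz_1^{-q}=1$, so it produces no relation at all there, not $\bar z_{1,k}^{\,cp}$. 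In $V$ the elements $z_1^{t_1^k}$, $0\leq k\leq p-1$, are distinct members of $\calX_V$, one has $[z_1,t_1]^{t_1^k}=(z_1^{t_1^k})^{-1}z_1^{t_1^{k+1}}$, and only $v=t_1^p$ acts on $M$ by a scalar. The correct images are the relations \eqref{eq:rel VmodK A}, resp.\ \eqref{eq:rel VmodK B}. Moreover, your values of $\theta\vert_V$ (which match the sentence preceding the proposition) hold only when $z_1=x_1$. In case~\eqref{eq:rel2U B} one has $t_1\neq y_1$, so $\theta(t_1)=\theta(v)=1$, and the non-trivial values of $\theta\vert_V$ sit on the conjugates of the $y_1(h)$. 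That is precisely what you need when you invoke Remark~\ref{rem:G2 U kummer}(b) for $x_1(h)^q[x_1(h),y_1(h)]$.

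The proposal then stops at an expectation, and the element you expect to be torsion is not: $\bar z_1$ has infinite order in $M$. In case~\eqref{eq:rel2U A}, write $e_k$ for the image of $z_1^{t_1^k}$ and set $e_p=(1-q)^pe_0$. The only non-trivial relations are $p\bigl((q-1)e_k+e_{k+1}\bigr)=0$. All these vectors are killed by the functional $\sum_k(1-q)^ke_k^\ast$, which equals $1$ on $e_0$. In case~\eqref{eq:rel2U B} the relations are $p(e_{k+1}-e_k)=0$ with $e_p=e_0$, and $\sum_ke_k^\ast$ does the same job. The ``$p$ versus $q\Z_p$'' comparison in your last paragraph is an artifact of the wrong reduction.

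The missing observation is elementary: use a single relation, say $k=0$. The element $y=\bar z_1^{\,q-1}\,\overline{z_1^{t_1}}$ (resp.\ $y=\bar z_1^{\,-1}\,\overline{z_1^{t_1}}$) of $\bar V=V/K_{\theta\vert_V}(V)$ satisfies $y^p=1$. Also $y\notin\Phi(\bar V)$, because $z_1\neq z_1^{t_1}$ both belong to the minimal generating set $\calX_V$ (Lemma~\ref{lem:G1 Vab gen}) and $K_{\theta\vert_V}(V)\subseteq\Phi(V)$. Hence $\bar V$ has non-trivial torsion, and Proposition~\ref{prop:K} concludes. This argument does not use the value of $\theta(v)$. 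It is also safer than multiplying the $p$ relations together as the paper does. After that product every exponent is divisible by $q$, so minimality of $\calX_{\bar V}$ no longer certifies non-triviality; in case~\eqref{eq:rel2U A} the element in \eqref{eq:rel barV A} is in fact trivial in $\bar V$.
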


\begin{proof}
	Our goal is to show that the quotient $\bar V=V/K_{\theta\vert_{V}}(V)$ is not torsion-free, so that $(V,\theta_V)$ is not Kummerian by Proposition~\ref{prop:K}.
	
	Within this proof, for every $s\in V$ let $\bar s=sK_{\theta\vert_{V}}(V)\in\bar V$ denote its coset.
	Since $K_{\theta\vert_{V}}(V)\subseteq\Phi(V)$, the set $\calX_{\bar V}=\{\bar s\:\mid\:s\in\calX_V\}$ minimally generates $\bar V$ as a pro-$p$ group by Lemma~\ref{lem:G1 Vab gen}.
	
	Similarly to the proof of Lemma~\ref{lem:G1 Vab gen}, if the second defining relation of $U$ is \eqref{eq:rel2U A} --- namely, $z_1=x_1$ ---, $V$ satisfies the relations
	\[ \left[t_2^{t_1^k},u^{t_1^k}\right] = \prod_{h=0}^{p-1}\left(\left[x_{i_2+1}(h)^{t_1^k},y_{i_2+1}(h)^{t_1^k}\right]\cdots\right)
	\left(\left(z_1^{t_1^k}\right)^{q-1} z_1^{t_1^{k+1}}\left[z_1^{t_1^k},w_h^{t_1^k}\right]^{t_1}\cdots\right)^{\epsilon_2} \]
	for every $0\leq k\leq p-1$; while if the second defining relation of $U$ is \eqref{eq:rel2U B} --- namely, $i_1>1$ ---, $V$ satisfies the relations
	\[ \left[t_2^{t_1^k},u^{t_1^k}\right] = \prod_{h=0}^{p-1}\left(\left[x_{i_2+1}(h)^{t_1^k},y_{i_2+1}(h)^{t_1^k}\right]\cdots\right)
	\left(\cdots\left(z_1^{t_1^k}\right)^{-1} z_1^{t_1^{k+1}}\left[z_1^{t_1^k},w_h^{t_1^k}\right]^{t_1}\cdots\right)^{\epsilon_2} \]
	for every $0\leq k\leq p-1$ --- in both cases we have applied equality \eqref{eq:comm_yt}.
	Since $t_2,u,z_1,w_h,x_j(h),y_j(h)\in\Ker(\theta\vert_V)$ for all $j\neq i_1,i_2$ and $0\leq h\leq p-1$, all the comutators appearing in the equality above lie in $K_{\theta\vert_V}(V)$, and thus the above equalities imply, respectively,
	\begin{equation}\label{eq:rel VmodK A}
		1=\prod_{h=0}^{p-1}\left(\left(\overline{z_1^{t_1^k}}\right)^{q-1}\cdot \overline{z_1^{t_1^{k+1}}}\right)  =\left(\left(\overline{z_1^{t_1^k}}\right)^{q-1}\cdot\overline{z_1^{t_1^{k+1}}}\right)^{p}
	\end{equation}
	and
	\begin{equation}\label{eq:rel VmodK B}
		1=\prod_{h=0}^{p-1}\left(\left(\overline{z_1^{t_1^k}}\right)^{-1}\cdot \overline{z_1^{t_1^{k+1}}}\right)  =\left(\left(\overline{z_1^{t_1^k}}\right)^{-1}\cdot\overline{z_1^{t_1^{k+1}}}\right)^{p}
	\end{equation}
	in $\bar V$ --- observe that for $k=p-1$, one has $z_1^{t_1^p}=z_1^v=z_1[z_1,v]$, and thus
	$$\overline{z_1^{t_1^p}}=\bar z_1^{\theta(v)^{-1}}=\bar z_1^{(1-q)^p}\qquad\text{in }\bar V.$$
	Since $\Ker(\theta\vert_V)/K_{\theta\vert_V}(V)$ is an abelian pro-$p$ group, putting together the $p$ relations \eqref{eq:rel VmodK A}, respectively the $p$ relations \eqref{eq:rel VmodK B}, for $0\leq k\leq p-1$, yields, respectively,
	\begin{equation}\label{eq:rel barV A}
		\begin{split}
			1 &= \prod_{k=0}^{p-1}\left(\left(\overline{z_1^{t_1^k}}\right)^{q-1}\cdot\overline{z_1^{t_1^{k+1}}}\right)^{p}\\
			&= \left(\bar z_1^{q-1}\cdot \left(\overline{z_1^{t_1}}\right)^{1+(q-1)}\cdots \left(\overline{z_1^{t_1^{p-1}}}\right)^{1+(q-1)}\cdot\bar z_1^{(1-q)^p}\right)^{p}\\
			&=\left(\bar z_1^{(1-q)\left((1-q)^{p-1}-1\right)}\left(\overline{z_1^{t_1}}\cdots\overline{z_1^{t_1^{p-1}}}\right)^q\right)^p;\end{split}\end{equation}
	and
	\begin{equation}\label{eq:rel barV B}
		\begin{split}
			1 &= \prod_{k=0}^{p-1}\left(\left(\overline{z_1^{t_1^k}}\right)^{-1}\cdot\overline{z_1^{t_1^{k+1}}}\right)^{p}\\
			&= \left(\bar z_1^{-1}\cdot \left(\overline{z_1^{t_1}}\right)^{1-1}\cdots \left(\overline{z_1^{t_1^{p-1}}}\right)^{1-1}\cdot\bar z_1^{(1-q)^p}\right)^{p}=\bar z_1^{p\left((1-q)^p-1\right)},
	\end{split}\end{equation}
	Since we are assuming that $\theta\neq\mathbf{1}_G$, one has $q\neq0$, and hence $$1-q,(1-q)^{p-1}-1,(1-q)^p-1\neq0.$$
	Moreover, the elements $\bar z_1,\overline{z_1^{t_1}},\ldots\overline{z_1^{t_1^{p-1}}}$ are $p$ distinct elements of the minimal generating set $\calX_{\bar V}$.
	Therefore, \eqref{eq:rel barV A}--\eqref{eq:rel barV B} yield (non-trivial) torsion elements of $\bar V$, so that $(V,\theta\vert_V)$ is not Kummerian by Proposition~\ref{prop:K}.
\end{proof}

Altogether, Theorem~\ref{thmA1} follows by Corollary~\ref{cor:G1 no abstorfree} and Proposition~\ref{prop:Vab A}.


\subsection{Massey products}
Finally, we prove Proposition~\ref{prop:G1 massey}--(b).
Given a sequence $\alpha_1,\ldots,\alpha_n$ of elements of $\rmH^1(G)$ of length $n\geq3$ whose associated $n$-fold Massey product $\langle\alpha_1,\ldots,\alpha_n\rangle$ is defined, we may assume that $\alpha_h\neq0$ for all $h=1,\ldots,n$, otherwise --- by Lemma~\ref{lem:masey}--(a) --- there is nothing to prove.
We split the proof of Proposition~\ref{prop:G1 massey}--(b) into three cases.

First, Lemma~\ref{lem:masey}--(b) implies the following more general result.

\begin{prop}
	Let $\alpha_1,\ldots,\alpha_n$ be a sequence of elements of $\rmH^1(G)$ of length $n\geq3$ whose associated $n$-fold Massey product $\langle\alpha_1,\ldots,\alpha_n\rangle$ is defined.
	If $\alpha_h(z_i)\neq0$ for $h\in\{1,n\}$ and $i\in\{1,2\}$, then the $n$-fold Massey product $\langle\alpha_1,\ldots,\alpha_n\rangle$ vanishes.
\end{prop}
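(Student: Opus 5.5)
The plan is to use Lemma~\ref{lem:masey}--(b). Recall from \S~\ref{ssec:G1 cohom} that $\rmH^2(G)$ has basis $\{x_1^\ast\smallsmile y_1^\ast,\ z_1^\ast\smallsmile z_2^\ast\}$. We will show that, under the hypothesis, both basis vectors can be written as $\alpha_h\smallsmile\beta$ for suitable $\beta\in\rmH^1(G)$, with $h\in\{1,n\}$ the index in the hypothesis. Then every element of $\rmH^2(G)$ is of the form $\alpha_h\smallsmile\beta$, since cup product with a fixed $\alpha_h$ is linear in $\beta$. By Lemma~\ref{lem:masey}--(b), every such element lies in $\langle\alpha_1,\ldots,\alpha_n\rangle$, because the Massey product is defined. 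In particular $0=\alpha_h\smallsmile 0$ lies there. Strictly speaking, the case $\beta=0$ already gives $0\in\langle\alpha_1,\ldots,\alpha_n\rangle$ directly, which shows the statement is almost immediate. The more informative conclusion, and the one we will record, is that $\langle\alpha_1,\ldots,\alpha_n\rangle=\rmH^2(G)$.

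The steps are as follows. First, fix $h\in\{1,n\}$ and $i\in\{1,2\}$ with $\alpha_h(z_i)\neq0$. Let $j$ be the other index, so that $\{z_i,z_j\}=\{z_1,z_2\}$.

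Second, compute $\alpha_h\smallsmile z_j^\ast$ using the cup-product table from \S~\ref{ssec:G1 cohom}. Expand $\alpha_h=\sum_{w\in\calX_G}\alpha_h(w)w^\ast$. The only products $w^\ast\smallsmile z_j^\ast$ that can be nonzero are:
\begin{itemize}
\item $z_i^\ast\smallsmile z_j^\ast$, which equals $\pm z_1^\ast\smallsmile z_2^\ast$;
\item $t_j^\ast\smallsmile z_j^\ast$, where $t_j$ is the generator paired with $z_j$ in the Demu\v skin relation; this equals $\pm x_1^\ast\smallsmile y_1^\ast$.
\end{itemize}
Hence
\[
\alpha_h\smallsmile z_j^\ast=\pm\alpha_h(z_i)\,(z_1^\ast\smallsmile z_2^\ast)\pm\alpha_h(t_j)\,(x_1^\ast\smallsmile y_1^\ast).
\]
Similarly, $\alpha_h\smallsmile t_i^\ast=\pm\alpha_h(z_i)\,(x_1^\ast\smallsmile y_1^\ast)$, plus a possible $z_1^\ast\smallsmile z_2^\ast$ term if $t_i$ happens to equal $z_j$. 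That cannot occur, because $\{z_1,z_2\}$ is not a pair $\{x_k,y_k\}$.

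Third, since $\alpha_h(z_i)\neq0$, the two vectors $\alpha_h\smallsmile t_i^\ast$ and $\alpha_h\smallsmile z_j^\ast$ form a triangular system in the basis, hence span $\rmH^2(G)$. Lemma~\ref{lem:masey}--(b) then gives the conclusion, for $h=1$ via $\alpha_1\smallsmile\alpha'$ and for $h=n$ via $\alpha_n\smallsmile\alpha''$.

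The only delicate point is the bookkeeping of signs and of the case $q$-dependent term $x_1^\ast\smallsmile x_1^\ast$ when $p=2$ and $z_i=x_1$. This is harmless, since $q\geq4$ when $p=2$ makes that cup square vanish. Triangularity survives regardless of signs, so I expect no real obstacle.
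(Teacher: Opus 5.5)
Your argument is the paper's proof: it too takes $\beta=c_1t_1^\ast+c_2z_2^\ast$, which is your $\beta\in\Span(t_i^\ast,z_j^\ast)$ in the case $h=i=1$. It checks that the resulting $2\times2$ system in the basis $\{x_1^\ast\smallsmile y_1^\ast,\,z_1^\ast\smallsmile z_2^\ast\}$ is triangular with diagonal entries $\pm\alpha_h(z_i)\neq0$, and concludes via Lemma~\ref{lem:masey}--(b).

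However, drop your aside that $\beta=0$ already gives vanishing. Read literally, that would make every defined Massey product of every pro-$p$ group vanish. The correct content of Lemma~\ref{lem:masey}--(b) (cf.\ the sources cited there) is that $\omega+\alpha_1\smallsmile\alpha'+\alpha''\smallsmile\alpha_n\in\langle\alpha_1,\ldots,\alpha_n\rangle$ whenever $\omega\in\langle\alpha_1,\ldots,\alpha_n\rangle$. So it is precisely the surjectivity of $\beta\mapsto\alpha_h\smallsmile\beta$ onto $\rmH^2(G)$ that forces $0\in\langle\alpha_1,\ldots,\alpha_n\rangle$.
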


\begin{proof}
	We assume that $h=1$ and $i=1$ --- the argument for the other three cases works verbatim, mutatis mutandis.
	Set $\mathcal{S}=\{z_1,z_2,t_1,t_2\}\subseteq\calX_G$. Then
	\[\alpha_1=a_1 z_1^\ast+b_1 t_1^\ast+a_2 z_2^\ast+b_2 t_2^\ast+
	\sum_{s\in\calX_G\smallsetminus\mathcal{S}}\alpha_1(s)s^\ast,\]
	with $a_i=\alpha_1(z_i),b_i=\alpha_1(t_i)$ --- thus, $a_1\neq0$.
	Hence, by \S~\ref{ssec:G1 cohom}, for $0\leq c_1,c_2<p$ one has
	\[ \begin{split}
		\alpha_1\smallsmile (c_1t_1^\ast+c_2z_2^\ast)&= a_1c_1(z_1^\ast\smallsmile t_1^\ast)+a_1c_2(z_1^\ast\smallsmile z_2^\ast)+b_2c_2(t_2^\ast\smallsmile z_2^\ast)\\
		&=(-1)^{\epsilon_1}a_1c_1(x_1^\ast\smallsmile y_1^\ast)++a_1c_2(z_1^\ast\smallsmile z_2^\ast)-(-1)^{\epsilon_2}b_2c_2(x_1^\ast\smallsmile y_1^\ast)\\
		&= \left((-1)^{\epsilon_1}a_1c_1-(-1)^{\epsilon_2}b_2c_2\right)(x_1^\ast\smallsmile y_1^\ast)+
		a_1c_2(z_1^\ast\smallsmile z_2^\ast).
	\end{split}\]
	Therefore, one may obtain any element of $\rmH^2(G)$ as a cup-product $\alpha_1\smallsmile \beta$ with $\beta=c_1t_1^\ast+c_2z_2^\ast$, for a suitable choice of $c_1,c_2$, so that Lemma~\ref{lem:masey}--(b) implies that $\langle\alpha_1,\ldots,\alpha_n\rangle\supseteq\rmH^2(G)$.
\end{proof}

\begin{prop}
	Let $\alpha_1,\ldots,\alpha_n$ be a sequence of elements of $\rmH^1(G)$ of length $n\geq3$ whose associated $n$-fold Massey product $\langle\alpha_1,\ldots,\alpha_n\rangle$ is defined.
	If $\alpha_h(z_i)=0$ for $h=1,\ldots,n$ and $i\in\{1,2\}$, then the $n$-fold Massey product $\langle\alpha_1,\ldots,\alpha_n\rangle$ vanishes.
\end{prop}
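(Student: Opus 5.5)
The plan is to reduce to the Demu\v skin group $\tilde G$, in the same spirit as the proof of Proposition~\ref{prop:massey G2}. We will construct a homomorphism $\rho\colon G\to\dbU_{n+1}$ with $\rho_{h,h+1}=\alpha_h$ for all $h$, and then apply Proposition~\ref{prop:massey unip}--(b).

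\textbf{Step 1: transfer to $\tilde G$.} Since $\langle\alpha_1,\ldots,\alpha_n\rangle$ is defined, Lemma~\ref{lem:masey}--(c) gives $\alpha_h\smallsmile\alpha_{h+1}=0$ for all $h$. Let $\tilde\alpha_h=\alpha_h\circ\pi\in\rmH^1(\tilde G)$. The inflation $\pi^\ast\colon\rmH^1(G)\to\rmH^1(\tilde G)$ is an isomorphism, and it is compatible with cup products. Under the identifications of \S~\ref{ssec:G1 cohom}, inflation sends $x_1^\ast\smallsmile y_1^\ast$ to the generator of $\rmH^2(\tilde G)$ and kills $z_1^\ast\smallsmile z_2^\ast$. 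Hence $\tilde\alpha_h\smallsmile\tilde\alpha_{h+1}=0$ as well, so the sequence $\tilde\alpha_1,\ldots,\tilde\alpha_n$ satisfies \eqref{eq:cup 0 massey} in $\rmH^\bullet(\tilde G)$. By Example~\ref{ex:demushkin massey}--(a), $\tilde G$ has the strong $n$-fold Massey vanishing property. Proposition~\ref{prop:massey unip}--(b), read as the Dwyer correspondence in the direction vanishing $\Rightarrow$ lift, then gives a homomorphism $\tilde\rho\colon\tilde G\to\dbU_{n+1}$ with $\tilde\rho_{h,h+1}=\tilde\alpha_h$.

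\textbf{Step 2: make $\tilde\rho$ kill $[z_1,z_2]$, so it factors through $G$.} This is the main obstacle. We know $\tilde\rho(z_i)$ has zero superdiagonal, since $\alpha_h(z_i)=0$, but higher entries may be nonzero and then $[\tilde\rho(z_1),\tilde\rho(z_2)]$ need not vanish. The first attempt is to rebuild the lift with control over $z_1,z_2$. The proof of the strong Massey vanishing for Demu\v skin groups (e.g.\ \cite[Thm.~3.5]{pal:Massey}) builds $\rho$ by choosing the images of the generators freely and only adjusting the partner of one pair to satisfy the relation. Since $z_1,z_2$ lie in different pairs $\{z_i,t_i\}$, we would set $\rho(z_1)=\rho(z_2)=I_{n+1}$, which is allowed since their superdiagonals vanish. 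With this choice the factors $[z_1,t_1]$ and $[z_2,t_2]$ (and $z_1^q$ when $z_1=x_1$) are trivial, and the Demu\v skin relation reduces to a product of commutators over the remaining pairs. We then need to find matrices for $x_i,y_i$, $i\neq i_1,i_2$, with the prescribed superdiagonals satisfying that product relation, while $\rho(t_1),\rho(t_2)$ are arbitrary lifts of their superdiagonals.

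\textbf{Step 3: solve the reduced relation.} The reduced relation is exactly the relation of the Demu\v skin-type quotient $\bar G$, obtained from $\tilde G$ by killing $z_1,z_2$. This is a free pro-$p$ group on $t_1,t_2$ times (free product) a Demu\v skin group on the remaining $2(d-2)$ generators, or a free group if $d=2$. The remaining cup-product condition becomes the vanishing of the corresponding cup products in $\bar G$, as in the computation in the proof of Proposition~\ref{prop:massey G2}. Note that the coefficient of $x_1^\ast\smallsmile y_1^\ast$ only involves pairs other than $\{z_i,t_i\}$ once $\alpha_h(z_i)=0$. The strong Massey property for the Demu\v skin factor, or freeness when $d=2$, then supplies the matrices. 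This step requires two checks: that the pairs $(z_i,t_i)$ contribute nothing because $\alpha_h(z_i)=0$, and the case $i_1=1$, where the power $z_1^q$ is handled by $\rho(z_1)=I$.
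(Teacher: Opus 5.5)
Your proof is correct and is essentially the paper's argument: send $z_1,z_2$ to $I_{n+1}$ and $t_1,t_2$ to arbitrary lifts of their superdiagonals, and obtain the matrices for the remaining generators from the strong Massey vanishing property of the Demu\v skin group on the pairs $i\neq i_1,i_2$. The paper packages this through $G_1\amalg G_2$; you package it through the quotient of $G$ by the normal closure of $z_1,z_2$, which has the small advantage that the relation visibly reduces to the Demu\v skin one wherever $[z_1,t_1]$ and $[z_2,t_2]$ sit in it.

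Your Step~1 (the lift through $\tilde G$) is never used and can be dropped. Like the paper, you rely on the section's convention that $i_1=1$ forces $z_1=x_1$. If instead $z_1=y_1$ and $q\neq0$, the factor $t_1^q$ survives, and one should kill only $z_2$. That leaves the free product of $\langle t_2\rangle$ with a Demu\v skin group on the other $2d-2$ generators.
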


\begin{proof}
	If $d=2$, then $G$ is generated by $z_1,z_2,t_1,t_2$ --- namely, $z_1=x_1,t_1=y_1$ and $z_2\in\{x_2,y_2\}$, and 
	\[G=\langle\: z_1,z_2,t_1,t_2\:\mid\:z_1^q[z_1,t_1]\cdot[z_2,t_2]^{\epsilon_2}=[z_1,z_2]=1,\:\epsilon_2\in\{\pm1\}\:\rangle.\]
	Set $D_1=(d_{ij}),D_2=(d_{ij}')\in\dbU_{n+1}$ such that 
	\[d_{ij}=\begin{cases}\alpha_h(t_1)&\text{if }i=h,j=h+1,\\
		0&\text{if }j>i+1,\end{cases}\qquad\text{and}\qquad
	d_{ij}'=\begin{cases}\alpha_h(t_2)&\text{if }i=h,j=h+1,\\
		0&\text{if }j>i+1.\end{cases}\]
	Then obviously
	\[I_{n+1}^q[I_{n+1},D_1]\cdot[I_{n+1},D_2]^{\epsilon_2}=
	[I_{n+1},I_{n+1}]=I_{n+1},\]
	so that the assignment $z_i\mapsto I_{n+1}$ and $t_i\mapsto D_i$, $i=1,2$, yields a homomorphism $G\to\dbU_{n+1}$ satisfying the properties prescribed by Proposition~\ref{prop:massey unip}--(b).
	Therefore, the $n$-fold Massey product $\langle\alpha_1,\ldots,\alpha_n\rangle$ vanishes.
	
	Now suppose $d\geq2$. Let $G_1$ be the Demu\v skin group
	\[G_1=\left\langle\:x_i,y_i:\:i\neq i_1,i_2\:\mid\:x_1^{\epsilon q}\cdot\prod_{i\neq i_1,i_2}[x_i,y_i]=1
	\:\right\rangle,\]
	and $G_2$ the pro-$p$ group
	$$G_2=\langle\: z_1,z_2,t_1,t_2\:\mid\:z_1^{\epsilon'q}[z_1,t_1]\cdot[z_2,t_2]^{\epsilon_2}=[z_1,z_2]=1,\:\epsilon_2\in\{\pm1\}\:\rangle\in\mathcal{F}_2,$$
	where $\epsilon=0$ and $\epsilon'=1$ if $z_1=x_1$, and conversely if $z_1\neq x_1$, and consider the 
	free pro-$p$ product $\bar G=G_1\amalg G_2$.
	Then 
	\[\calX_G=\left(\calX_G\smallsetminus\{z_1,z_2,t_1,t_2\}\right)\:\dot\cup\:\{\:z_1,z_2,t_1,t_2\:\}
	\]
	is a minimal generating set also of $\bar G$, and thus one has an epimorphism $\psi\colon G\to \bar G$, with kernel generated as a normal subgroup of $G$ by
	\[z_1^{\epsilon'q}[z_1,t_1]\cdot[z_2,t_2]^{\epsilon_2}=\left(x_1^{\epsilon q}\cdot\prod_{i\neq i_1,i_2}[x_i,y_i]\right)^{-1}.\]
	Therefore, one has
	\[
	\rmH^1(G_1)\oplus\rmH^1(G_2)\simeq\rmH^1(\bar G)\overset{\sim}{\longrightarrow} \rmH^1(G),
	\]
	where the first isomorphism is due to \cite[Thm.4.1.5]{nsw:cohn}, and the second one is the inflation map $\mathrm{Inf}_{\bar G,G}^1\colon \rmH^1(\bar G)\to\rmH^1(G)$ induced by $\psi$ (cf. \cite{serre:galc}).
	With an abuse of notation, every $\alpha\in\rmH^1(G)$ may be expressed in a unique way as $\alpha=\alpha'+\alpha''$, with $\alpha'\in\rmH^1(G_1)$ and $\alpha''\in\rmH^1(G_2)$.
	In particular, 
	$$\alpha_h=\alpha_h'+\left(\alpha_h(t_1)t_1^\ast+\alpha_h(t_2)t_2^\ast\right)\quad\text{for } h=1,\ldots,n,$$
	and thus $\alpha_h''\smallsmile\alpha_{h+1}''=0$ for all $h=1,\ldots,n-1$.
	Since $\alpha_h\smallsmile\alpha_{h+1}=0$ by Lemma~\ref{lem:masey}--(c), one has $\alpha_h'\smallsmile\alpha'_{h+1}=0$ as well.
	
	Now, the Demu\v skin $G_1$ satisfies the strong $n$-Massey vanishing property (cf., e.g., \cite[Def.~1.2]{pal:Massey} or \cite[Def.~2.4]{cq:massey}), the $n$-fold Massey product $\langle\alpha_1',\ldots,\alpha_n'\rangle$ vanishes in $\bfH^\bullet(G_1)$, and hence Proposition~\ref{prop:massey unip}--(b) yields a homomorphism $\rho_{G_1}\colon G_1\to \dbU_{n+1}$ satisfying $(\rho_{G_1})_{h,h+1}=\alpha'_h$ for all $h=1,\ldots,n-1$.
	On the other hand, the assignment $z_i\mapsto I_{n+1}$ and $t_i\mapsto D_i$, $i=1,2$, as in the case $d=2$, yields a homomorphism $\rho_{G_2}\colon G_2\to\dbU_{n+1}$ satisfying $(\rho_{G_2})_{h,h+1}=\alpha''_h$ for all $h=1,\ldots,n-1$.
	
	Altogether, the universal property of the free pro-$p$ product (cf., e.g., \cite[Ch.~IV, \S~1]{nsw:cohn}) yields a homomorphism $\bar\rho\colon\bar G\to\dbU_{n+1}$, and the composition $\bar\rho\circ\psi\colon G\to\dbU_{n+1}$ satisfies the properties prescribed by Proposition~\ref{prop:massey unip}--(b), proving the claim.
\end{proof}

Finally, we deal with the remaining case for $n=3,4$.

\begin{prop}\label{prop:G1 massey}
	Let $n$ be equal to 3 or 4. If $\alpha_h(z_i)=0$ for $h=1,n$ and $i=1,2$, but $\alpha_h(z_i)\neq0$ for some $2\leq h\leq n-1$ and $i=1,2$, the $n$-fold Massey product $\langle\alpha_1,\ldots,\alpha_n\rangle$ vanishes.
\end{prop}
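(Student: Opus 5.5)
The plan is to apply Proposition~\ref{prop:massey unip}--(b): I will construct explicitly a homomorphism $\rho\colon G\to\dbU_{n+1}$ whose $(h,h+1)$-entries are the $\alpha_h$. In other words, I will assign matrices $Z_1,Z_2,T_1,T_2$ and $A_i,B_i$ ($i\neq i_1,i_2$) to the generators, with prescribed superdiagonals, so that both defining relations of $G$ hold in $\dbU_{n+1}$. Two facts are used throughout. First, $\langle\alpha_1,\ldots,\alpha_n\rangle$ is defined, so Lemma~\ref{lem:masey}--(c) gives $\alpha_h\smallsmile\alpha_{h+1}=0$. Second, by \S~\ref{ssec:G1 cohom}, $\{x_1^\ast\smallsmile y_1^\ast,\,z_1^\ast\smallsmile z_2^\ast\}$ is a basis of $\rmH^2(G)$. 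Hence the two coordinates of each $\alpha_h\smallsmile\alpha_{h+1}$ vanish separately:
\[
\alpha_h(z_1)\alpha_{h+1}(z_2)-\alpha_h(z_2)\alpha_{h+1}(z_1)=0,
\]
and the ``Demu\v skin'' coordinate, a sum of $2\times2$ minors over the pairs $(x_i,y_i)$, also vanishes. Since $\alpha_1,\alpha_n$ vanish on $z_1,z_2$, the superdiagonals of $Z_1,Z_2$ are supported on the positions $(h,h+1)$ with $2\leq h\leq n-1$.

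\emph{Step 1: the relation $[z_1,z_2]=1$.} I take $Z_1,Z_2$ supported inside the block of rows and columns $2,\ldots,n$; for $n\leq4$ this is a copy of $\dbU_3$ or smaller. Concretely, set $Z_i=I_{n+1}+\sum_{h=2}^{n-1}\alpha_h(z_i)E_{h,h+1}$, with all higher entries $0$, so there is no $(2,4)$-entry when $n=4$. For $n=3$ the block is one-dimensional, so $Z_1,Z_2$ commute. For $n=4$, $[Z_1,Z_2]=I_{5}+\big(\alpha_2(z_1)\alpha_3(z_2)-\alpha_2(z_2)\alpha_3(z_1)\big)E_{24}$, and this coefficient is zero by the displayed identity. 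So the second relation holds, and the freedom in all other generators stays untouched.

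\emph{Step 2: the first relation.} With $Z_1,Z_2$ now fixed, I solve
\[
Z_1^q[Z_1,T_1]\cdots[Z_2,T_2]^{\epsilon_2}\cdots=I_{n+1}
\]
(or its analogue in case \eqref{eq:G1 rel1 2caso}) level by level. Write $\dbU_{n+1}^{(k)}$ for the matrices whose nonzero off-diagonal entries satisfy $j-i\geq k$. Choose the superdiagonals of the remaining matrices as prescribed and all their higher entries as unknowns. By Fact~\ref{fact:matrix}, modulo $\dbU_{n+1}^{(k+1)}$ the level-$k$ part of the left-hand side equals a fixed quantity (determined by lower levels) plus a linear expression in the level-$(k-1)$ unknowns. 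Since $q\equiv0\bmod p$ (and $q\geq4$ if $p=2$), the factor $Z_1^q$ or $X_1^q$ only contributes at high level, and for $n\leq4$ one checks it lies in $\dbU^{(4)}$ or is trivial, exactly as in the proof of Proposition~\ref{prop:massey G2}. The level-$2$ part is the vector of Demu\v skin minors, which is zero by the cup-product vanishing. At levels $3$ and, for $n=4$, $4$, I must show that the linear map from the unknown entries of the $T_i$, $A_i$, $B_i$ hits the obstruction.

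\emph{Main obstacle.} The hard part is this surjectivity at the top levels. Earlier I could invoke the strong Massey vanishing of a Demu\v skin group, but here the images of $z_1,z_2$ are frozen by Step 1, so that argument is not directly available. The first thing I would try is to use the hypothesis that some $\alpha_h(z_i)\neq0$ with $2\leq h\leq n-1$. Changing the level-$(k-1)$ entries of $T_i$ changes $[Z_i,T_i]$ at level $k$ by a commutator with the superdiagonal of $Z_i$, which is nonzero at position $(h,h+1)$. This produces the needed entries in the positions $(h-k+1,h+1)$ and $(h,h+k)$, and, combined with the freedom in the entries of $A_i,B_i$, the corner entries $(1,n+1)$. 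For $n=3,4$ only a handful of positions occur, so I would finish by checking them case by case: $h=2$ or $h=3$, and $i=1$ or $2$.
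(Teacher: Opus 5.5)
\textbf{There is a genuine gap, and it starts in Step~1.} Freezing every entry of $Z_1,Z_2$ above the superdiagonal at $0$ throws away exactly the freedom that is needed, so Step~2 cannot be completed in general. For $Z=I_4+cE_{2,3}$ and any $T=(t_{ij})\in\dbU_4$ one computes
\[
[Z,T]=I_4+c\left(-t_{1,2}E_{1,3}+t_{3,4}E_{2,4}-t_{1,2}t_{3,4}E_{1,4}\right),
\]
which does not depend on the entries of $T$ above the superdiagonal.

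Now take $p$ odd, $d=2$, $z_i=x_i$, $t_i=y_i$, $n=3$, and $\alpha_1=\alpha_3=-y_1^\ast+y_2^\ast$, $\alpha_2=x_1^\ast+x_2^\ast$. All hypotheses hold, and both coordinates of $\alpha_1\smallsmile\alpha_2$ and $\alpha_2\smallsmile\alpha_3$ vanish (cf.~\S\ref{ssec:G1 cohom}), so the product is defined. Your choice $Z_1=Z_2=I_4+E_{2,3}$, however, gives $Z_1^q[Z_1,T_1][Z_2,T_2]=I_4-2E_{1,4}$ for \emph{every} $T_1,T_2$ with the prescribed superdiagonals. There are no other generators to absorb this corner. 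Your heuristic positions $(h-k+1,h+1)$ and $(h,h+k)$ are, for $h=2$, $k=3$, not even in $\dbU_4$.

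The Massey product does vanish here, but only because $Z_1$ is allowed a nonzero $(2,4)$-entry: $x_1\mapsto I_4+E_{2,3}+2E_{2,4}$ still commutes with $I_4+E_{2,3}$ and removes the defect. This is exactly the paper's final step. It multiplies the partner of a generator on which $\alpha_1$ is nonzero by an elementary matrix supported at $(2,n+1)$; that partner may well be $C_1$ or $C_2$. It then checks that $[C_1,C_2]=I_{n+1}$ survives, because the $(1,2)$- and $(n,n+1)$-entries of the $C_i$ vanish.

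\textbf{The key surjectivity is never proved, and definedness is never used.} The decisive step at levels $3$ and $4$ is only announced (``case by case''), and your scheme uses the definedness of $\langle\alpha_1,\ldots,\alpha_n\rangle$ only through $\alpha_h\smallsmile\alpha_{h+1}=0$. For $n=4$ this cannot suffice. Killing the level-$3$ entries $(1,4)$ and $(2,5)$ simultaneously, with the shared $(2,4)$-unknowns, is precisely the definedness of the $4$-fold product, which is in general stronger than vanishing of consecutive cup products.

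The paper avoids this by starting from a lift $\bar\rho\colon G\to\bar\dbU_{n+1}$, so everything below the center is already solved. Two corrections then remain:
\begin{enumerate}
\item for $n=4$, an $E_{3,5}$-perturbation $\tilde C$ of $C_1$ or $C_2$ making $[C_1,C_2]$ exactly trivial. This uses $c_{2,3}\neq0$ and the $z_1^\ast\smallsmile z_2^\ast$-coordinate of $\alpha_2\smallsmile\alpha_3$, and is compensated modulo the center by an $E_{3,5}$-perturbation of $D_1$ or $D_2$;
\item the $(2,n+1)$-correction described above, which removes the central defect.
\end{enumerate}

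A smaller inaccuracy: $X_1^q$ need not lie in $\dbU_5^{(4)}$ when $p=q=3$ and $n=4$, since $(I_5+N)^3=I_5+N^3$ has $(1,4)$-entry $n_{1,2}n_{2,3}n_{3,4}$. Proposition~\ref{prop:massey G2} relied on $n\leq q$, which fails here.
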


\begin{proof}
	By Proposition~\ref{prop:massey unip}--(a) there exists a homomorphism $\bar\rho\colon G\to\bar\dbU_n$ such that $\bar\rho_{h,h+1}=\alpha_j$ for all $h=1,\ldots, n$.
	
	Pick $A_1,B_1,\ldots,A_d,B_d\in\dbU_n$ such that 
	$$\bar\rho(x_i)=A_i\cdot\Zen(\dbU_{n+1})\qquad\text{and}\qquad
	\bar\rho(y_i)=B_i\cdot\Zen(\dbU_{n+1})\qquad\text{ for all }i=1,\ldots,d.$$
	Our strategy is to suitably modify (some of) the above matrices in order to construct a homomorphism $\rho\colon G\to\dbU_{n+1}$ satisfying the properties prescribed in Proposition~\ref{prop:massey unip}--(b).
	
	First, in order to simplify the notation, we call the matrices associated with $z_1,z_2$, and with $t_1,t_2$, respectively $C_1,C_2$ and $D_1,D_2$, and set 
	$$C_1=(c_{ij}),\; C_2=(c'_{ij}),\qquad\text{and}\qquad D_1=(d_{ij}),\; D_2=(d'_{ij}).$$
	Without loss of generality, we may suppose that $\alpha_2(z_1)=c_{2,3}\neq0$.
	We claim that 
	\begin{equation}\label{eq:comm C1C2C}
		\left[C_1,C_2\cdot \tilde C\right]=I_{n+1} \qquad \text{or}\qquad
		\left[C_1\cdot \tilde C,C_2\right]=I_{n+1},    
	\end{equation}
	for a suitable $\tilde C=I_{n+1}+\tilde cE_{3,n+1}\in\dbU_5$.
	
	If $n=3$, then an explicit computation shows that $[C_1,C_2]=I_4$, so that $\tilde c=0$ and $\tilde C=I_4$.
	
	If $n=4$, then by
	\eqref{eq:cup 0 massey} one has 
	\[\begin{split}
		0=\alpha_2\smallsmile\alpha_3 &= 
		(z_1^\ast\smallsmile z_2^\ast)\cdot\left(\alpha_2(z_1)\alpha_3(z_2)-\alpha_2(z_2)\alpha_3(z_1)\right)+\\
		&\qquad +(x_1^\ast\smallsmile y_1^\ast)\cdot\sum_{i=1}^d(\alpha_2(x_i)\alpha_3(y_i)-\alpha_2(y_i)\alpha_3(x_i))\\
		&=(c_{2,3}c'_{3,4}-c'_{2,3}c_{3,4})(z_1^\ast\smallsmile z_2^\ast)+c''(x_1^\ast\smallsmile y_1^\ast)
	\end{split}\]
	for some $c''\in\F_p$.
	Since $z_1^\ast\smallsmile z_2^\ast$ and $x_1^\ast\smallsmile y_1^\ast$ are linearly independent elements of $\rmH^2(G)$ (cf. \S\ref{ssec:G1 cohom}), necessarily $c_{2,3}c'_{3,4}=c_{3,4}c'_{2,3}$.
	Therefore $c'_{2,3}=kc_{2,3}$ and $c'_{3,4}=kc_{3,4}$, with $k=c_{2,3}'/c_{2,3}$.
	Then one computes
	\[[C_1,C_2]=\left(\begin{array}{ccccc} 
		1&0& 0 & c_{3,4}(kc_{1,3}-c'_{1,3}) & c_{1,3}c'_{3,5}-c'_{1,3}c_{3,5} \\ 
		&1& 0 & 0 &c_{2,3}(c'_{3,5}-kc_{3,5})\\
		&&1&0 & 0  \\ &&&1&0 \\ &&&& 1\end{array}\right)
	\]
	Since $\bar\rho$ is a morphism from $G$ to $\bar\dbU_5$, and since $[z_1,z_2]=1$, one has $[C_1,C_2]\in\Zen(\dbU_5)$, so that
	\begin{equation}\label{eq:c0 15 C1C2}
		c_{3,4}(kc_{1,3}-c'_{1,3})=c_{2,3}(c'_{3,5}-kc_{3,5})=0.
	\end{equation}
	Consequently, $c'_{3,5}=kc_{3,5}$, as $c_{2,3}\neq0$, and the $(1,5)$-entry of $[C_1,C_2]$ is $c_{3,5}(kc_{1,3}-c'_{1,3})$.
	If also $c_{3,4}\neq0$, then $c'_{1,3}=kc_{1,3}$ by \eqref{eq:c0 15 C1C2}, and $[C_1,C_2]=I_5$.
	If instead $c_{3,4}=0$ and $[C_1,C_2]\neq I_5$ --- namely, $c'_{1,3}\neq kc_{1,3}$ ---, then $c_{1,3},c'_{1,3}$ are not both 0.
	\begin{itemize}
		\item[(a)] If $c_{1,3}\neq0$, then set $\tilde c=-c_{3,5}(kc_{1,3}-c'_{1,3})$.
		Then 
		\[\begin{split}
			\left[C_1,C_2\tilde C\right]&= [C_1,\tilde C]\cdot[C_1,C_2]^{\tilde C}\\
			&= [C_1,C_2][C_1,\tilde C]\\&=I_5+(c_{3,5}(kc_{1,3}-c'_{1,3})-c_{3,5}(kc_{1,3}-c'_{1,3}))E_{1,5}=I_5.
		\end{split}\]    
		\item[(b)] If $c'_{1,3}\neq0$, then set $\tilde c=c_{3,5}(kc_{1,3}-c'_{1,3})$.
		Then 
		\[\begin{split}
			\left[C_1\tilde C,C_2\right]&= [C_1,C_2]^{\tilde C}\cdot[\tilde C,C_2]\\
			&= [C_1,C_2][\tilde C,C_2]\\&=I_5+(c_{3,5}(kc_{1,3}-c'_{1,3})-c_{3,5}(kc_{1,3}-c'_{1,3}))E_{1,5}=I_5.
		\end{split}\]
	\end{itemize}
	This shows \eqref{eq:comm C1C2C}.
	
	Now, since $\bar\rho$ is a morhpism from $G$ to $\bar\dbU_{n+1}$, one has 
	\begin{equation}\label{eq:rel demushkin mod ZU5 A}
		A_1^q[A_1,B_1]\cdots[A_d,B_d] \in\Zen(\dbU_{n+1})    
	\end{equation}
	--- recall that $A_{i_1}=C_1$ and $B_{i_1}=D_1$, if $z_1=x_{i_1}$; or $B_{i_1}=C_{1}$ and $A_{i_1}=D_1$, if $z_1=y_{i_1}$; and analogously with $z_2$ and $C_2,D_2$.
	If $[C_1,C_2]\neq I_{n+1}$, then $n=4$, $\tilde C=I_5+\tilde cE_{3,5}$, and for any matrix $D=(u_{ij})\in\dbU_5$ one has
	\[[D,\tilde C]=\left(\begin{array}{ccccc} 1 &0&0&0& \tilde u \\
		&1&0&0& u_{2,3}\tilde c \\ &&1&0&0 \\&&&1&0 \\ &&&&1\end{array}\right)\]
	for some $\tilde u\in\F_p$.
	Therefore, if one replaces $C_2$ with $C_2\tilde C$, or $C_1$ with $C_1\tilde C$ --- depending on the two cases above --- in \eqref{eq:rel demushkin mod ZU5 A} --- and hence $D=D_1$ or $D=D_2$, again depending on the two cases above ---, one obtains
	\[
	A_1^q[A_1,B_1]\cdots[A_d,B_d]\equiv [D',\tilde C]^\epsilon\equiv I_5+\epsilon u_{2,3}\tilde cE_{2,5}\mod \Zen(\dbU_{n+1})
	\]
	for some $\epsilon\in\{\pm1\}$ depending on $\epsilon_1,\epsilon_2$ --- observe that $[D,\tilde C]$ commutes with any matrix of $\dbU_5$ modulo $\Zen(\dbU_5)$, by Fact~\ref{fact:matrix}.
	Also, we remark that, in case $i_1=1$ --- and hence necessarily $z_1=x_1$ (cf. Proposition~\ref{prop:G1 only orient}) --- one has $(C_1\tilde C)^q\equiv C_1^q$ modulo $\Zen(\dbU_5)$, as $[C_1,\tilde C]^{\binom{q}{2}}=I_5$ and $[C_1,[C_1,\tilde C]]\in\Zen(\dbU_5)$.
	Therefore, we need to suitably modify one of the matrices $D_1,D_2$ in order to ``restore'' \eqref{eq:rel demushkin mod ZU5 A}.
	\begin{itemize}
		\item[(a)] If $c_{1,3}\neq0$ then the $(2,5)$-entry of $[\tilde C,D_2]^{\epsilon_2}$ is $-\epsilon_2 d'_{2,3}\tilde c$. Set $$\tilde D=I_5+\frac{\epsilon_2d'_{2,3}\tilde c}{c_{2,3}}E_{3,5}.$$ Then $[C_1,\tilde D]\equiv I_5+\epsilon_2d'_{2,3}\tilde cE_{2,5}$ modulo $\Zen(\dbU_5)$, and hence
		\[\begin{split}
			[C_1,\tilde DD_1]^{\epsilon_1}&=\left([C_1,D_1][C_1,\tilde D]^{D_1}\right)^{\epsilon_1}\\
			&\equiv [C_1,D_1]^{\epsilon_1}\cdot[\tilde C,D_2]^{-\epsilon_2}\mod \Zen(\dbU_5),    
		\end{split}\]
		and after replacing $C_2$ with $C_2\tilde C$ and $D_1$ with $\tilde D D_1$ in \eqref{eq:rel demushkin mod ZU5 A}, one obtains again a matrix in $\Zen(\dbU_5)$.
		
		\item[(b)] Similarly, if $c'_{1,3}\neq0$ then the $(2,5)$-entry of $[\tilde C,D_1]^{\epsilon_1}$ is $-\epsilon_1 d_{2,3}\tilde c$. Set $$\tilde D=I_5+\frac{\epsilon_2d_{2,3}\tilde c}{c'_{2,3}}E_{3,5}.$$ Then $[C_2,\tilde D]\equiv I_5+\epsilon_2d_{2,3}\tilde cE_{2,5}$ modulo $\Zen(\dbU_5)$, and hence
		\[\begin{split}
			[C_2,\tilde DD_2]^{\epsilon_2}&=\left([C_2,D_2][C_2,\tilde D]^{D_2}\right)^{\epsilon_2}\\
			&\equiv [C_2,D_2]^{\epsilon_2}\cdot[\tilde C,D_1]^{-\epsilon_1}\mod \Zen(\dbU_5),    
		\end{split}\]
		and after replacing $C_1$ with $C_1\tilde C$ and $D_2$ with $\tilde D D_2$ in \eqref{eq:rel demushkin mod ZU5 A}, one obtains again a matrix in $\Zen(\dbU_5)$.
	\end{itemize}
	
	Altogether, we may assume that --- for both $n=3,4$ --- one has matrices $A_1,\ldots,B_d$ such that \eqref{eq:rel demushkin mod ZU5 A} holds, and moreover $[C_1,C_2]=I_{n+1}$. 
	Hence, by \eqref{eq:rel demushkin mod ZU5 A} one has
	\[A_1^q[A_1,B_1]\cdots[A_d,B_d]=I_{n+1}+\tilde aE_{1,n+1} \in\Zen(\dbU_{n+1})   \]
	for some $\tilde a\in\F_p$.
	To conclude the proof, we need to suitably modify one of the matrices $A_1,\ldots,B_d$.
	Since we are assuming that $\alpha_1\neq0$, one has $\alpha_1(x_j)\neq0$ or $\alpha_1(y_j)\neq0$ for some $j\in\{1,\ldots,d\}$.
	\begin{itemize}
		\item[(a)] If $\alpha_1(x_j)\neq 0$ then set $\tilde B=I_{n+1}-\tilde a/\alpha_1(x_j)E_{2,n+1}$. 
		Then 
		\[    \left[A_j,B_j\tilde B\right] =[A_j,\tilde B]\cdot[A_j,B_j]^{\tilde B}=\left(I_{n+1}-\tilde a E_{1,n+1}\right)\cdot[A_j,B_j],       \]
		as $\tilde B$ and $[A_j,B_j]$ commute by Fact~\ref{fact:matrix}.
		Observe that if $x_j=t_1$ --- respectively, $x_j=t_2$ ---, then replacing $B_j=C_1$ with $C_1\tilde B$ yields
		$$[C_1\tilde B,C_2]=[C_1,C_2]^{\tilde B}\cdot [\tilde B,C_2]=I_{n+1}\cdot I_{n+1},$$
		as $[\tilde B,C_2]=I_{n+1}$, for the $(1,2)$-entry and the $(n,n+1)$-entry of $C_2$ are 0 --- respectively (and analogously), replacing $B_j=C_2$ with $C_2\tilde B$, yields $[C_1,C_2\tilde B]=I_{n+1}$.
		\item[(b)] Analogously, if $\alpha_1(y_j)\neq 0$ then set 
		$\tilde A=I_{n+1}+\tilde a/\alpha_1(y_j)E_{2,n+1}$.
		Then 
		\[  \left[A_j\tilde A,B_j\right] =[A_j,B_j]^{\tilde A}\cdot[\tilde A,B_j]=[A_j,B_j]\cdot \left(I_{n+1}-\tilde a E_{1,n+1}\right),   \]
		as $\tilde A$ and $[A_j,B_j]$ commute by Fact~\ref{fact:matrix}.
		As above, we remark that if $y_j=t_1$ --- respectively, $y_j=t_2$ --- then replacing $A_j=C_1$ with $C_1\tilde A$ yields $[C_1\tilde A,C_2]=I_{n+1}$ --- respectively, replacing $A_j=C_2$ with $C_2\tilde A$ yields $[C_1,C_2\tilde A]=I_{n+1}$ ---, with the same argument as above.
	\end{itemize}
	
	Therefore, if one replaces $B_j$ with $B_j\tilde B$, or $A_j$ with $A_j\tilde A$, depending on the two cases above, one obtains
	\[A_1^q[A_1,B_1]\cdots[A_d,B_d]=[C_1,C_2]=I_{n+1}\]
	--- once again, we remark that if $\alpha_1(y_1)\neq 0$, then one has $(A_1\tilde A)^q=A_1^q$, because of $[A_1,\tilde A]^{\binom{q}{2}}=I_{n+1}$ and $[A_1,\tilde A]\in\Zen(\dbU_5)$.
	Thus, this assignment yields a homomorphism $\rho\colon G\to \dbU_{n+1}$ satisfying the properties prescribed in Proposition~\ref{prop:massey unip}--(b), and the $n$-fold Massey product $\langle\alpha_1,\ldots,\alpha_n\rangle$ vanishes.
\end{proof}

We suspect that such a pro-$p$ group $G$ satisfies, in fact, the $n$-fold Massey vanishing property for every $n\geq3$.

\begin{question}
	Let $G$ be a pro-$p$ group lying in the family $\mathcal F_2$. Does $G$ satisfy the $n$-fold Massey vanishing property for every $n\geq3$?
\end{question}

\subsection{The graded group algebra}
Consider the graded group algebra $\gr \FppG$, generated by $X_1,Y_2\ldots,X_d,Y_d$.
The defining relations of $G$ have initial form the Lie polynomials
\[
(X_1Y_1-Y_1X_1)+\ldots+(X_dY_d-Y_dX_d)\qquad\text{and}\qquad Z_1Z_2-Z_2Z_1,
\]
with $Z_1,Z_2$ as in \S~\ref{ssec:intro group algebras}.
Since $G$ is a mild pro-$p$ group by Proposition~\ref{prop:mild},
\cite[Thm.~2.12--(3)]{Jochen} implies that
\[
    \grFp G\simeq \frac{T^\bullet(X_1,Y_1,\dots,X_d,Y_d)}{([X_1,Y_1]+\dots+[X_d,Y_d],[Z_1,Z_2])},
\]
and moreover, \cite[Thm.~1.3]{expo} implies Proposition~\ref{prop:properties intro}--(3) for $\mathcal F_2$.

Moreover, by Jennings' theorem and Theorem~\ref{thm:2rel}, the restricted Lie algebra $\gr G$ is Bloch-Kato, with cohomology $\ext^\bullet_{u(\gr G)}(\F_p,\F_p)\simeq \bfH^\bullet(G)$. 
This proves Proposition~\ref{prop:gradedalgebras intro} for $\mathcal F_2$.

To boot, we have the following result on the restricted Lie algebras associated to the pro-$p$ groups lying in either of the two families $\mathcal{F}_1,\mathcal{F}_2$.

 \begin{prop}\label{prop:free}
    Let $G$ be a pro-$p$ group lying in one of the two families $\mathcal F_i$ ($i=1,2$), and let $\mathfrak g$ be the ordinary subalgebra of $\gr G$ generated by $\gr _1G=G/G^p[G,G]$. 
    Then, the derived subalgebra $\mathfrak g':=[\mathfrak g,\mathfrak g]$ is a free Lie algebra. In particular, all finitely generated subalgebras of $\mathfrak g$ are of type FP$_\infty$, namely, they have finite trivial-coefficients cohomology groups.
 \end{prop}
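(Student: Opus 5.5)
Since $G$ is mild (Proposition~\ref{prop:mild}), the paper has already shown that $\gr G$ is the restricted Lie algebra with the presentations of \S~\ref{ssec:intro group algebras}. The ordinary subalgebra $\mathfrak g$ generated in degree one is then the ordinary Lie algebra with the same generators and relations: for $\mathcal F_1$, $L(X_1,Y_1)\ast\mathfrak d_{2d-2}$; for $\mathcal F_2$, $L(X_1,\dots,Y_d)/([X_1,Y_1]+\dots+[X_d,Y_d],[Z_1,Z_2])$. Here $\mathfrak d_{2m}$ denotes the ordinary Demu\v skin Lie algebra $L(X,Y)/(\sum[X_i,Y_i])$. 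I would justify this by observing that the Koszul restricted Lie algebra $\gr G$ has $u(\gr G)$ quadratic with the same quadratic relations. Its ordinary part therefore has enveloping algebra $U(\mathfrak g)$ given by the same tensor-algebra quotient as $\grFp G$ in the previous subsections. Koszulity passes to $U(\mathfrak g)$, so $\mathfrak g$ is a Koszul (ordinary) Lie algebra of cohomological dimension $2$, with $\dim H^1=2d$ and $\dim H^2\le2$.

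The key step is to show that $\mathfrak g'$ is free. I plan to use the criterion that a graded Lie ideal $\mathfrak h$ of a Lie algebra of cohomological dimension $2$ is free if and only if $H^2(\mathfrak h)=0$. Via the Hochschild--Serre spectral sequence, or a Shapiro/Euler-characteristic argument, this reduces to checking that
\[
H^2(\mathfrak g,U(\mathfrak g/\mathfrak g'))=H^2(\mathfrak g,S(\mathfrak g_1))=0.
\]
Concretely, I will take the Koszul resolution of the trivial module, which has length $2$ and ranks $1,2d,r$ with $r=\dim H^2$. I will tensor it with $U(\mathfrak g^{ab})=S(V)$, $V=\mathfrak g_1$, to get a complex of free $S(V)$-modules
\[
0\to S(V)^r\xrightarrow{\ \partial_2\ }S(V)^{2d}\xrightarrow{\ \partial_1\ }S(V)\to0 .
\]
Here $\partial_1$ sends basis vectors to the variables, and $\partial_2$ is given by the Fox-type derivatives of the relations, linearised to $S(V)$. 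Then $H_\bullet(\mathfrak g',k)$ is the homology of this complex. Freeness of $\mathfrak g'$ is equivalent to injectivity of $\partial_2$, i.e. $H_2=0$.

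I will verify injectivity by computing $\partial_2$. The commutator $[X_i,Y_i]$ contributes the column with entries $-Y_i$ in position $X_i$ and $X_i$ in position $Y_i$. So for the Demu\v skin-type relation, $\partial_2$ maps $1\mapsto(-Y_1,X_1,\dots,-Y_d,X_d)$ (starting at index $2$ for $\mathcal F_1$), which is injective because $S(V)$ is a domain. For $\mathcal F_2$ the second relation gives the column $(-Z_2,Z_1)$ in the $Z_1,Z_2$ positions. The two columns are linearly independent over the fraction field: if they were dependent, the first column would vanish outside the $Z_1,Z_2$ positions, but it has the entry $\pm T_1\neq0$ in the $T_1$ position. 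Hence the $2\times 2d$ matrix has rank $2$ and $\partial_2$ is injective. The case $\mathcal F_1$ is analogous, or follows directly because a free product of a free Lie algebra with $\mathfrak d$ has free derived algebra by a Kurosh-type argument.

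The main obstacle I foresee is the identification $H_2(\mathfrak g',k)=\ker\partial_2$. This requires that tensoring the Koszul (free) resolution over $U(\mathfrak g)$ with $U(\mathfrak g/\mathfrak g')$ computes $\mathrm{Tor}^{U(\mathfrak g')}(k,k)$, which holds since $U(\mathfrak g)$ is free over $U(\mathfrak g')$ (PBW). It also requires that a Lie algebra with $H_2=0$ is free; this is standard for graded Lie algebras, by a minimal-generators argument. For the last claim: finitely generated subalgebras $\mathfrak k$ of $\mathfrak g$ satisfy $\mathfrak k\cap\mathfrak g'$ free, with $\mathfrak k/(\mathfrak k\cap\mathfrak g')$ abelian and finite dimensional. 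Hochschild--Serre then gives finiteness of cohomology once one shows $H_1(\mathfrak k\cap\mathfrak g')$ is finitely generated over $U(\mathfrak k^{ab})$. This follows from Noetherianity of polynomial rings together with the finite free resolution above; I expect this to be the fiddly part.
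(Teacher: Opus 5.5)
Your argument is correct, but it takes a different route from the paper.

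\textbf{The paper's route.} The paper proves freeness of $\mathfrak g'$ structurally.
\begin{itemize}
\item For $\mathcal F_1$ it views $\mathfrak g$ as the free product of a free Lie algebra of rank $2$ and a Demu\v skin Lie algebra, and applies a Kurosh-type theorem for free products of Lie algebras.
\item For $\mathcal F_2$ it normalises the second relation to $[Y_1,Y_2]$ and writes $\mathfrak g$ as an HNN-extension of $\mathfrak h=\langle Y_1,X_2,\ldots,Y_d\rangle$. The stable letter $X_1$ acts on $\langle Y_1\rangle$ by the derivation $Y_1\mapsto\sum_{i\geq2}[Y_i,X_i]$. It then uses the Bloch--Kato property to know that $\mathfrak h=\langle Y_1,X_2,\ldots,Y_d\mid[Y_1,Y_2]\rangle$, so $\mathfrak h'$ is free. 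It concludes by an HNN result, since $\langle Y_1\rangle\cap\mathfrak h'=0$.
\item The finiteness statement is quoted from a Bass--Serre-type result.
\end{itemize}

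\textbf{Your route.} You compute $H_2(\mathfrak g',\F_p)=\mathrm{Tor}_2^{U\mathfrak g}(S(V),\F_p)$ by Shapiro's lemma, and reduce freeness to the rank of the abelianised Fox matrix over the domain $S(V)$. This is uniform in both families and needs neither Bass--Serre theory for Lie algebras nor the Bloch--Kato input. It does not even need $\cd(\mathfrak g)=2$. The complex $U\mathfrak g\otimes R\to U\mathfrak g\otimes V\to U\mathfrak g\to\F_p\to0$ is exact for any presentation, so injectivity of $\partial_2$ alone forces $H_2(\mathfrak g',\F_p)=0$. The price is that the ``in particular'' no longer comes for free, and you must supply the Hochschild--Serre argument yourself.

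\textbf{Points to fix.}
\begin{itemize}
\item The reduction to $H^2(\mathfrak g,U(\mathfrak g/\mathfrak g'))=0$ is false as written. Shapiro's lemma in cohomology involves the coinduced module $\mathrm{Hom}_{\F_p}(S(V),\F_p)$, and $H^2(\mathfrak g,S(V))$ is in fact nonzero: for $\mathcal F_1$ it is $S(V)/(X_2,Y_2,\ldots,X_d,Y_d)\simeq\F_p[X_1,Y_1]$. What you need, and what your concrete computation actually does, is the homological statement $H_2(\mathfrak g,S(V))=\ker\partial_2=0$.
\item The entry of the first column in the $T_1$-position is $\pm Z_1$, not $\pm T_1$. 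This is harmless, since every entry of that column is a nonzero variable and there are $2d\geq4$ positions.
\item For the last claim, set $\mathfrak f=\mathfrak k\cap\mathfrak g'$ and $\mathfrak a=\mathfrak k/\mathfrak f$. Finite generation of $H_1(\mathfrak f,\F_p)$ over $S(\mathfrak a)$ does not come from the resolution of $\mathfrak g$; it comes from $\mathfrak k$ being finitely generated. Apply $U\mathfrak a\otimes_{U\mathfrak k}-$ to $0\to I_{\mathfrak k}\to U\mathfrak k\to\F_p\to0$ and use Shapiro's lemma to get an embedding $H_1(\mathfrak f,\F_p)\hookrightarrow U\mathfrak a\otimes_{U\mathfrak k}I_{\mathfrak k}$. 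The target is generated by the elements $1\otimes s_i$, for generators $s_i$ of $\mathfrak k$. Noetherianity of $S(\mathfrak a)$ then finishes the argument.
\item Freeness of $\mathfrak f$ itself, for arbitrary (not necessarily graded) $\mathfrak k$, uses the Shirshov--Witt theorem; say so explicitly.
\end{itemize}
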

 \begin{proof}
    If $G$ belongs to the class $\mathcal F_1$, then $\mathfrak g$ is the free product of a surface Lie algebra and a free Lie algebra of rank $2$. Since all the proper subalgebras of a surface Lie algebra are free, it follows from \cite[Prop. 2.10]{bassSerreLie} that $\mathfrak g'$ is a free Lie algebra.

    Now let $G$ be a group of the family $\mathcal F_2$.
     Up to isomorphism, $\mathfrak g$ admits the following presentation as an ordinary Lie algebra: \[\pres{X_1,Y_1,\dots,X_d,Y_d}{\sum_{i=1}^d[X_i,Y_i],[Y_1,Y_2]}.\] 
     Let $\phi:\gen{Y_1}\to\mathfrak h=\gen{Y_1,X_2,Y_2,\dots,X_d,Y_d}$ be the derivation defined by $\phi(Y_1)=\sum_{i=2}^d [Y_i,X_i]$. Since $\mathfrak g$ is Bloch-Kato, the subalgebra $\mathfrak h$ generated by $Y_1,X_2,Y_2,\dots,X_d,Y_d$ is quadratic, and hence it has presentation $\pres{Y_1,X_2,Y_2,\dots,X_d,Y_d}{[Y_1,Y_2]}$. 
     In particular, $\mathfrak h$ has free derived subalgebra. 
     Now, by \cite{hnnLS}, since $\gen {Y_1}\cap \mathfrak h'=0$, we deduce that $\mathfrak g'$ is a free Lie algebra.

     By \cite[Prop. 5.8]{bassSerreLie}, $\mathfrak g$ is locally of type FP, that is, $\ext^n_{U(\mathfrak g)}(\F_p,\F_p)$ is finite. 
 \end{proof}

 The finitely generated closed subgroups of a Bloch-Kato pro-$p$ group have quadratic cohomology, and, in particular, they admit a finite presentation. This means that Bloch-Kato pro-$p$ groups are coherent. 

 Although it is not known whether Bloch-Kato restricted Lie algebras are coherent, we only know that, by definition, their subalgebras generated in degree one are of type FP$_\infty$. 
 From Proposition \ref{prop:free} we deduce that the \textit{ordinary} Lie subalgebra of $\gr G$ generated in degree $1$ is coherent, when $G$ belongs to one of the two families $\mathcal F_i$'s.

 \begin{question}
     Let $\mathfrak g$ be a Bloch-Kato restricted Lie algebra, and let $\mathfrak h$ be a finitely generated restricted subalgebra. Is $\mathfrak h$ finitely presented?
 \end{question}


\section{Concerning the last example}
We conclude with some properties of the pro-$p$ group with presentation \eqref{eq:pres question} and its associated graded algebras.

First, we observe that by \cite[Thm.~B]{qsv:quadratic} the $\F_p$-cohomology algebra $\bfH^\bullet(G)$ is quadratic, and $\cd(G)=2$.

Now, the initial forms of the defining relations of $G$ in $\mathrm{gr}\:G$ become
\[X_1Y_1-Y_1X_1=X_2Y_2-Y_2X_2=\ldots=X_dY_d-Y_dX_d.
\]
Considering the lexicographic order, the leading monomials of these relations are $Y_dX_d$, $Y_{d-1}X_{d-1}$, $\ldots$, $Y_2X_2$, and therefore by \cite[Thm.~3.5]{Jochen} $G$ is a mild pro-$p$ group.
Consequently, the graded restricted Lie algebra associated to $G$ is 
$$\gr \:G=\pres{X_1,Y_1,X_2,Y_2,X_3,Y_3}{[X_1,Y_1]=[X_2,Y_2]=[X_3,Y_3]}.$$

By Theorem~\ref{thm:2rel}, $\gr G$ is Bloch-Kato, with cohomology 
$$\ext^\bullet_{u(\gr G)}(\F_p,\F_p)=\bfH^\bullet(G)=\Lambda^\bullet(x_i^\ast,y_i^\ast)/(\Omega),$$ where $\Omega$ is the subspace of $\Lambda^2(x_i^\ast,y_i^\ast)$ generated by the elements $x_i^\ast\wedge y_j^\ast,\ x_i^\ast\wedge x_j^\ast,\ y_i^\ast\wedge y_j^\ast$ ($i\neq j$), and $x_1^\ast\wedge y_1^\ast+x_2^\ast\wedge y_2^\ast+x_3^\ast\wedge y_3^\ast$. In particular, this cohomology algebra is universally Koszul.

In \cite[\S~4.3]{simone:koszul}, the first named author studies this restricted Lie algebra as an ordinary Lie algebra.
\begin{prop}\label{prop:b2d}
    Let $d\geq 1$ be an integer, and consider the ordinary Lie $\F_p$-algebra 
    $$\mathfrak b(2d)=\pres{X_1,Y_1,\dots,X_n,Y_n}{[X_i,Y_i]-[X_1,Y_1]:\ 2\leq i\leq d}. $$

    \begin{enumerate}
        \item The Lie algebra $\mathfrak b(2d)$ is Bloch-Kato;
        \item The derived subalgebra $\mathfrak b(2d)'$ is free, and all its finitely generated subalgebras are of type FP;
        \item The cohomology algebra $\bfH^\bullet(\mathfrak b(2d),\F_p)$ is universally Koszul, and admits a presentation $\Lambda^\bullet(X_i^\ast,Y_i^\ast)/(\Omega)$, where $\Omega$ is spanned by the elements $X_i^\ast\wedge Y_j^\ast,\ X_i^\ast\wedge X_j^\ast,\ Y_i^\ast\wedge Y_j^\ast$ ($i\neq j$), and $\sum_{i=1}^d X_i^\ast\wedge Y_i^\ast$;
        \item The cohomological dimension of $\mathfrak b(2d)$ is two.
    \end{enumerate}
\end{prop}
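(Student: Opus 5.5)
The natural route is to realize $\mathfrak b(2d)$ as an amalgamated free product of abelian Lie algebras, or equivalently as a central-type extension, and read off everything from that decomposition. Put $W=[X_1,Y_1]$. Each subalgebra $\mathfrak a_i=\gen{X_i,Y_i,W}$ is the Heisenberg-like Lie algebra on $X_i,Y_i$ with $[X_i,Y_i]=W$, modulo nothing else; concretely it is the free Lie algebra on two generators. The presentation then exhibits $\mathfrak b(2d)$ as the amalgam of the $d$ free Lie algebras $\mathfrak a_i=\gen{X_i,Y_i}$ over the common one-dimensional subalgebra $\gen{W}$, embedded in each via $W\mapsto[X_i,Y_i]$.

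\textbf{Step 1: quadraticity and cohomology, items (3) and (4).} The relations are homogeneous of degree two, and I would first check that they form a strongly free (inert) set. One way is to note that the leading monomials $Y_iX_i$ (with $2\le i\le d$, under the lexicographic order used above for $G$) are combinatorially free, as in \cite[Thm.~3.5]{Jochen}. Inertness gives that $U(\mathfrak b(2d))$ has global dimension $2$ and Hilbert series $(1-2dt+(d-1)t^2)^{-1}$, hence item (4). The Ext algebra is then concentrated in degrees $0,1,2$, with $\ext^1$ dual to the generators and $\ext^2$ dual to the $d-1$ relations. Its cup product $\ext^1\otimes\ext^1\to\ext^2$ is dual to the map sending a relation to its quadratic part. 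This yields exactly the quotient $\Lambda^\bullet(X_i^\ast,Y_i^\ast)/(\Omega)$. A dimension check confirms it: $\Lambda^2$ modulo $\Omega$ has dimension $d-1$, since the products $X_i^\ast\wedge Y_i^\ast$ span a space of dimension $d$ cut down by the single relation $\sum_i X_i^\ast\wedge Y_i^\ast$. Koszulity follows because the algebra has cohomological dimension $2$ and is generated in degree one with quadratic relations.

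\textbf{Step 2: Bloch-Kato and universal Koszulity, items (1) and (3).} When $d\le3$, item (1) is immediate from Theorem~\ref{thm:2rel} in its ordinary-Lie version, since $\dim\ext^2=d-1\le2$. For general $d$ I would use the amalgam description. By a Kurosh/Bass--Serre theorem for Lie algebras \cite{bassSerreLie}, a subalgebra $\mathfrak h$ generated in degree one acts on the associated tree, so it decomposes as a graph of Lie algebras. The vertex groups are $\mathfrak h\cap\mathfrak a_i^{g}$, which are subalgebras of free Lie algebras and hence free. The edge groups are $\mathfrak h\cap\gen{W}^g$, each of dimension at most one. I would then argue that such a fundamental Lie algebra has a linear (Koszul) resolution. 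The Mayer--Vietoris sequence for the amalgam gives $\mathrm{cd}\le2$ and quadratic cohomology. Koszulity is then forced, since for $\mathrm{cd}\le2$ quadraticity implies Koszulity. Universal Koszulity follows from the theorem quoted before Theorem~\ref{thm:2rel} (\cite[Thm.~A]{simone:kurosh}).

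\textbf{Step 3: the derived subalgebra, item (2).} Consider $\mathfrak b(2d)'$, the kernel of the map to the abelianization $\F_p^{2d}$. It intersects every conjugate of the vertex algebras $\mathfrak a_i$ in the derived subalgebra of a free Lie algebra, which is free. It meets the edge algebras $\gen W$ in $\gen W$ itself, since $W$ lies in degree two. Applying \cite[Prop.~2.10]{bassSerreLie}, as in the proof of Proposition~\ref{prop:free}, a subalgebra of an amalgam of free Lie algebras along edge algebras is free as soon as it intersects the edge algebras in a free factor of its vertex intersections. Here $W$ is a primitive element of $\mathfrak a_i'$, because it is a free generator of the derived subalgebra of the free Lie algebra on $X_i,Y_i$. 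The FP statement then follows from \cite[Prop.~5.8]{bassSerreLie}, exactly as in Proposition~\ref{prop:free}.

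\textbf{Main obstacle.} I expect the hardest step to be the Bass--Serre analysis in Step 2 (and its use in Step 3). Specifically, it requires justifying the amalgam decomposition of $\mathfrak b(2d)$ over $\gen W$ rigorously, which needs the normal-form or embedding theorem for Lie amalgams over a one-dimensional subalgebra. It also requires controlling subalgebras generated in degree one. If the general machinery is unavailable, my fallback is induction on $d$. I would write $\mathfrak b(2d)=\mathfrak b(2d-2)\amalg_{\gen W}\mathfrak a_d$ and at each stage apply the two-relation argument of \cite{cq:2relUK}, as adapted in \cite[Thm.~4.3]{simone:koszul}.
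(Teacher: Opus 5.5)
\paragraph{Verdict.} The paper gives no argument of its own here (it defers to \cite[\S~4.3]{simone:koszul}), so I judge your proposal on its own terms. Step~1 is sound. The leading monomials $Y_iX_i$ are combinatorially free, so by Anick's criterion the relations are strongly free and $U(\mathfrak b(2d))$ has a linear resolution of length two. Item (4), the presentation in (3) and Koszulity all follow. Note that for $d=1$ there are no relations and $\mathfrak b(2)$ is free of cohomological dimension one, so (4) needs $d\geq2$.

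\paragraph{The gap.} The genuine gap is Step~2, i.e.\ item (1) for $d\geq4$, and with it the universal Koszulity in (3), which you deduce from (1). A Lie algebra has no conjugation, so $\mathfrak a_i^g$ and $\gen{W}^g$ are undefined. The decomposition you describe, with vertex algebras $\mathfrak h\cap\mathfrak a_i^g$ and edge algebras $\mathfrak h\cap\gen{W}^g$, is modelled on groups acting on trees and is not available as stated. Even granting some such decomposition, Mayer--Vietoris does not by itself give quadraticity. The vertex intersections are free but need not be generated in degree one: for $\mathfrak h=\gen{X_1,X_2,Y_2}$ one has $\mathfrak h\cap\mathfrak a_1\supseteq\gen{X_1,W}$ with $W$ of degree $2$. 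So you would still have to show that $\rmH^2(\mathfrak h)$ is concentrated in internal degree $2$, which is precisely the Koszulity to be proved. The fallback is not an argument either: Theorem~\ref{thm:2rel} concerns at most two relations, and nothing explains how it would propagate along $\mathfrak b(2d-2)\amalg_{\gen W}\mathfrak a_d$.

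\paragraph{Closing Step~2.} A short fix is to use \cite[Thm.~A]{simone:kurosh} in the direction opposite to yours. Let $A=\Lambda^\bullet/(\Omega)$ and $P_i=\mathrm{span}(X_i^\ast,Y_i^\ast)$.
\begin{itemize}
\item Since $A_3=0$, for every subspace $L\subseteq A_1$ and $\ell\in A_1\smallsetminus L$ one has $(AL:\ell)=K\oplus A_2$ with $K=(AL:\ell)_1$.
\item Hence the ideals generated in degree one form a Koszul filtration as soon as $A_1K=A_2$ for all such $L,\ell$.
\item This holds because $K\supseteq\mathrm{Ann}_1(\ell)$, and $\mathrm{Ann}_1(\ell)$ meets every $P_i$: it contains the $P_i$-component of $\ell$ if this is nonzero, and all of $P_i$ otherwise.
\item Moreover $A_1u=\F\,\overline{X_i^\ast\wedge Y_i^\ast}$ for every $0\neq u\in P_i$, and these classes span $A_2$.
\end{itemize}
This gives universal Koszulity in (3), and then (1).

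\paragraph{Step~3.} Step~3 also needs repair. In the paper, \cite[Prop.~2.10]{bassSerreLie} is only applied to a free product. Moreover, ``each edge algebra is a free factor of the adjacent vertex algebra'' does not imply freeness even for groups once the underlying graph has cycles, as it does for the group analogue of $\mathfrak b(2d)'$. For example, $\langle a,b,t\mid[t,a]=1\rangle$ is built this way but contains $\Z^2$.

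Use instead Shapiro's lemma with your Step~1 resolution. Set $V=\mathrm{span}(X_i,Y_i)$, let $R$ be spanned by $r_k=[X_k,Y_k]-[X_1,Y_1]$, and let $S(V)=U(\mathfrak b(2d)/\mathfrak b(2d)')$. Then $\rmH_2(\mathfrak b(2d)')=\ker(S(V)\otimes R\to S(V)\otimes V)$, where the map sends $f\otimes r_k$ to $fX_k\otimes Y_k-fY_k\otimes X_k-fX_1\otimes Y_1+fY_1\otimes X_1$. For $k\geq2$ the $Y_k$-coordinate of the image of $\sum_k f_k\otimes r_k$ is $f_kX_k$, which shows the map is injective. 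Hence the positively graded algebra $\mathfrak b(2d)'$ is free. Its finitely generated subalgebras are then free of finite rank by Shirshov--Witt, hence of type FP.
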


Finally, it is not difficult to see that $G$ satisfies the 3-fold Massey vanishing property. 
Indeed, let $\alpha_1,\alpha_2,\alpha_3$ be a sequence of elements of $\rmH^1(G)$, and let $\bar \rho\colon G\to\bar\dbU_4$ be a homomorphism satisfying the properties prescribed in Proposition~\ref{prop:massey unip}--(a). Put $A_i=\bar\rho(x_i)$ and $B_i=\bar\rho(y_i)$ for $i=1,\ldots,d$.
Then 
\[[A_1,B_1]\equiv[A_2,B_2]\equiv\ldots\equiv[A_d,B_d]\mod\Zen(\dbU_4).\]
Pick $C\in\dbU_4$ such that $C\equiv[A_1,B_1]\bmod \Zen(\dbU_4)$ and the (1,4)-entry of $C$ is 0.
If $\alpha_h(x_i)=\alpha_h(y_i)=0$ for both $h=1,3$ and for some $i$, then $[A_i,B_i]=I_4$, and hence necessarily $C=I_4$.
On the other hand, if $\alpha_h(x_i)\neq0$ or $\alpha_h(y_i)\neq0$ with $h\in\{1,3\}$, then we proceed as in the proof of Proposition~\ref{prop:G1 massey}: if $\alpha_h(x_i)\neq0$, then we may find a suitable matrix $\tilde B=I_4+b'E_{1,3}+b''E_{2,4}$ such that $[A_i,B_i\tilde B]=C$; while if $\alpha_h(y_i)\neq0$ then we may find a suitable matrix $\tilde A=I_4+a'E_{1,3}+a''E_{2,4}$ such that $[A_i\tilde A,B_i]=C$ --- we leave the details to the reader.
After replacing $A_i$ or $B_i$ accordingly, the assignment $x_i\mapsto A_i$ and $y_i\mapsto B_i$ yields a homomorphism $\rho\colon G\to\dbU_4$ as prescribed in Proposition~\ref{prop:massey unip}--(b).

Whether $G$ is a Bloch-Kato (or 1-cyclotomic) pro-$p$ group is still unknown to the authors. 
As remarked for the pro-$p$ groups in the families $\mathcal{F}_1,\mathcal{F}_2$, since $\cd(G)=2$, in order to prove the Bloch-Kato property it suffices to check the shape of the defining relations of the open subgroups of $G$.

\end{document}